\begin{document}

\title{A Positivity-preserving High Order Finite Volume
Compact-WENO Scheme for Compressible Euler Equations
}


\author{Yan Guo  \and
           Tao Xiong
           \and
         Yufeng Shi 
}


\institute{Yan Guo \at
              Department of Mathematics, China University of Mining and Technology, Xuzhou, Jiangsu 221116, P.R. China. \\
              \email{yanguo@cumt.edu.cn}           
          \and
          Tao Xiong \at
          Department of Mathematics, University of Houston, Houston, Texas 77004, U.S.A. \\
          \email{txiong@math.uh.edu}
           \and
           Yufeng Shi \at
              School of Electric Power Engineering, China University of Mining and Technology, Xuzhou, Jiangsu 221116, P.R. China.\\
              \email{shiyufeng@cumt.edu.cn}
}

\date{Received: date / Accepted: date}

\maketitle

\begin{abstract}
In this paper, a positivity-preserving fifth-order finite
volume compact-WENO scheme is proposed for solving compressible Euler equations.
As we know, conservative compact finite volume
schemes have high resolution properties while WENO (Weighted
Essentially Non-Oscillatory) schemes are essentially non-oscillatory
near flow discontinuities. We extend the idea of
WENO schemes to some classical finite volume compact schemes \cite{pirozzoli2002conservative},
where lower order compact stencils are combined with WENO nonlinear
weights to get a higher order finite volume compact-WENO scheme.
The newly developed positivity-preserving limiter \cite{zhang2010positivity,zhang2011maximum}
is used to preserve positive density and internal energy for compressible Euler equations of fluid
dynamics. The HLLC (Harten, Lax, and van Leer with Contact)
approximate Riemann solver \cite{toro2009riemann,batten1997choice} is used to get the numerical flux
at the cell interfaces. Numerical tests are presented to
demonstrate the high-order accuracy, positivity-preserving,
high-resolution and robustness of the proposed scheme.

 \keywords{Compact scheme \and finite volume \and weighted
essentially non-oscillatory scheme \and positivity-preserving \and
compressible Euler equations }
\end{abstract}

\section{Introduction}
\label{intro}
\setcounter{equation}{0}
\setcounter{figure}{0}
\setcounter{table}{0}

Computing  numerical solutions of nonlinear hyperbolic systems of
conservation laws is an interesting and challenging work.  In recent
years, a variety of high resolution schemes which are high order
accurate for smooth solutions and non-oscillatory for discontinuous
solutions without introducing spurious oscillations
have been proposed for these problems. WENO schemes \cite{liu1994weighted,jiang1995efficient,shu1998essentially,shu1989efficient,balsara2000monotonicity} have high order accuracy in smooth region and keep the essentially non-oscillatory properties
for capturing shocks. However, these classical WENO schemes often suffer
from poor spectral resolution and excessive numerical dissipation.

Compact schemes \cite{lele1992compact} have attracted a lot of
attention due to its spectral-like resolution properties by using
global grids. These schemes have the features of high-order accuracy with
smaller stencils. However, linear compact schemes necessarily produce Gibbs-like
oscillations when they are directly applied to
flows with shock discontinuities, and the amplitude would not decrease with mesh
refinement. To address this difficulty, several hybrid compact schemes are proposed to couple the ENO 
or WENO schemes for shock-turbulence interaction problems, e.g., a hybrid compact-ENO scheme by Adams and Shariff \cite{adams1996high} and a hybrid compact-WENO scheme by Pirozzoli \cite{pirozzoli2002conservative}. A new hybrid scheme as a weighted average of the compact 
scheme \cite{pirozzoli2002conservative} and the WENO scheme \cite{jiang1995efficient} was developed by
Ren et. al. \cite{ren2003characteristic}. Another compact scheme by treating the discontinuity as an internal boundary was proposed by Shen et. al. \cite{shen2006high}. These hybrid schemes require indicators to detect discontinuities and switch to a non-compact scheme around discontinuities, spectral-like
resolution properties would be lost. 

A class of nonlinear compact schemes was proposed by Cockburn and Shu \cite{cockburn1994nonlinearly}
for shock calculations. It was based on the cell-centered compact schemes \cite{lele1992compact} and combined with TVD or TVB limiters to control spurious numerical oscillations. Deng and Maekawa \cite{deng1997compact} and Deng and Zhang \cite{deng2000developing} developed a class of nonlinear compact schemes based on the ENO and WENO ideas respectively by adaptively choosing candidate stencils. Zhang et. al. \cite{zhang2008development} proposed increasingly higher order compact schemes based on higher order WENO reconstructions \cite{balsara2000monotonicity}. Instead of interpolating the conservative variables, they directly interpolated the flux by using the Lax-Friedrichs flux splitting and characteristic-wise projections. An improvement of the compact scheme converging to steady-state solutions of Euler equations was studies in \cite{zhang2013improvement}. A new linear central compact scheme was proposed in \cite{liu2013new}, both grid points and half grid points are evolved to get higher order accuracy and
better resolutions. 

Jiang et. al. \cite{jiang2001weighted} developed a class of weighted
compact schemes based on the Pad\'{e} type scheme of Lele \cite{lele1992compact}. 
It is a weighted combination of two biased third order compact stencils and 
a central fourth order compact stencil. A sixth order central compact scheme can be obtained
in smooth regions. Recently Ghosh and Baeder employed the idea in
\cite{jiang2001weighted}, and developed a class of
compact-reconstruction finite difference WENO schemes \cite{ghosh2012compact}. 
Lower order biased compact candidate stencils are identified at the cell interface and
combined with the optimal nonlinear WENO weights. The resulting high order scheme 
is upwind. Their scheme was shown to be superior spectral accurate and non-oscillatory 
at discontinuities. 


In this paper, we consider to design finite volume high order compact
schemes for solving compressible Euler equations.
A conservative formulation of the Euler equations is given by
\begin{equation}\label{1DEuler}
U_t+F(U)_x=0,
\end{equation}
where $U$ and $F(U)$ are vectors of conservative variables and
fluxes respectively, which are given by
\begin{displaymath}
U=\left[\begin{array}{ccc}
 u_1\\u_2\\u_3
  \end{array}\right]
 =\left[\begin{array}{ccc}
 \rho\\\rho u\\E
  \end{array}\right]
  ,\quad F(U)
 =\left[\begin{array}{ccc}
 \rho u\\\rho u^2+p\\u(E+p)
\end{array}\right],
\end{displaymath}
with
\begin{equation}
E=\rho(\frac{1}{2}u^2+e), \quad e=e(\rho,p)=\frac{p}{(\gamma-1)\rho},
\end{equation}
where $\rho$ is the density, $p$ is the pressure, $u$ is the particle velocity ,
$E$ is the total energy per unit volume, $e$ is the specific internal
energy and $\gamma$ is the ratio of specific heat ($\gamma = 1.4$ for ideal gas).
The sound speed $a$ is defined as
\begin{equation}\label{sspeed}
a=\sqrt{\frac{\gamma p}{\rho}}.
\end{equation}
Physically, the density $\rho$ and the pressure $p$ should both be
positive, and failure of preserving positive density or
pressure may cause blow-up of the numerical solutions. Many first order 
schemes were shown to be positivity-preserving, such as Godunov-type 
schemes \cite{einfeldt1991godunov}, flux vector splitting schemes \cite{gressier1999positivity},
Lax-Friedrichs schemes
\cite{perthame1996positivity,zhang2010positivity}, HLLC schemes
\cite{batten1997choice} and gas-kinetic schemes
\cite{perthame1990boltzmann,tao1999gas}. Some second-order schemes
were also developed based on these first order schemes, such as
\cite{einfeldt1991godunov,tao1999gas,perthame1996positivity,estivalezes1996high}. 
Recently Zhang and Shu have developed positivity-preserving methods for high order
discontinuous Galerkin (DG) methods
\cite{zhang2010positivity,zhang2011positivity,zhang2012maximum},
finite volume and finite difference WENO schemes \cite{zhang2011maximum,zhang2012positivity}.
Self-adjusting and positivity preserving high order schemes were developed by Balsara for MHD equations \cite{balsara2012self}. Hu et. al. have developed positivity-preserving
high-order conservative schemes  by using a flux cut-off method for solving compressible Euler
equations \cite{hu2013positivity}. Xiong et. al have developed a parametrized positivity preserving flux limiters for finite difference schemes solving compressible Euler equations \cite{xiong2013parametrized}.

In the present paper, we will develop a conservative positivity-preserving
fifth-order finite volume compact-WENO (FVCW) scheme 
for compressible Euler equations. We employ the main idea in
\cite{ghosh2012compact} where lower order compact stencils are
combined with the optimal WENO weights to yield a fifth-order upwind compact
interpolation. As an alternative to the finite difference compact
interpolation in \cite{ghosh2012compact}, we design a finite volume compact
upwind scheme, which is more nature and can be easily used on unstructured meshes.
We also employ the newly developed positivity-preserving rescaling limiter in \cite{zhang2010positivity,zhang2011maximum} to preserve positive density and internal energy, which
is very important in some extreme cases, such as vacuum or near vacuum solutions. 
The HLLC approximate Riemann solver \cite{toro2009riemann,batten1997choice}
will be used as the numerical flux at the element interfaces
due to its less dissipation and robustness for solving compressible Euler equations.
The first order finite volume scheme with the HLLC flux is proved to preserve
positive density and internal energy.
We will show that the high order finite volume compact scheme with the positivity preserving rescaling limiter, can maintain high order accuracy similarly as the non-compact finite volume schemes. Numerical experiments will be presented to demonstrate the high spectral accuracy, high resolution, positivity-preserving and robustness of our proposed approach.

The rest of the paper is organized as follows. In
Section 2, the positivity-preserving finite volume
compact-WENO scheme for compressible Euler equations is presented.
Numerical tests for some benchmark problems of compressible Euler equations
are studied in Section 3. Conclusions are made in Section 4.

\section{Positivity-preserving finite volume compact-WENO scheme}
\label{sec2}
\setcounter{equation}{0}
\setcounter{figure}{0}
\setcounter{table}{0}

\subsection{Finite volume scheme for compressible Euler equations}
\label{sec2.1}
In this section, we first introduce the finite volume scheme \cite{leveque2004finite} for compressible Euler equations
(\ref{1DEuler}) . The computational domain $[a,b]$ is
divided into $N$ cells as follows
\begin{equation*}
a=x_{\frac{1}{2}}<x_{\frac{3}{2}}<\cdots<x_{N+\frac{1}{2}}=b.
\end{equation*}
The cells are denoted by $I_j=[x_{j-\frac{1}{2}},x_{j+\frac{1}{2}}]$ with the cell center $x_j=\frac12(x_{j-\frac{1}{2}}+x_{j+\frac{1}{2}})$ and
the cell size $\Delta
x_j=x_{j+\frac{1}{2}}-x_{j-\frac{1}{2}}$. If we integrate equation (\ref{1DEuler}) over cell $I_j$,
we obtain
\begin{equation}\label{1DEulerFV}
\frac{\partial}{\partial{t}}\int^{x_{j+\frac{1}{2}}}_{x_{j-\frac{1}{2}}}Udx+
F(U(x_{j+\frac{1}{2}},t))-F(U(x_{j-\frac{1}{2}},t))=0.
\end{equation}
The cell average of $I_j$ is defined as
\begin{equation}
\bar{U}_j=\frac{1}{\Delta
{x_j}}\int^{x_{j+\frac{1}{2}}}_{x_{j-\frac{1}{2}}}U(x,t)dx,
\end{equation}
and the finite volume conservative scheme for (\ref{1DEulerFV}) is
\begin{equation}\label{1DEulerFV-b}
\frac{d\bar{U}_j(t)}{dt}=-\frac{1}{\Delta
{x_j}}(\hat{F}_{j+\frac{1}{2}}-\hat{F}_{j-\frac{1}{2}}),
\end{equation}
where the numerical flux $\hat{F}_{j+\frac{1}{2}}$ is a vector function of
mass, momentum and total energy at the cell boundary and is defined by
\begin{equation}\label{flux}
\hat{F}_{j+\frac{1}{2}}=\hat{F}(U^{-}_{j+\frac{1}{2}},U^{+}_{j+\frac{1}{2}}).
\end{equation}
In this paper, $U^{-}_{j+\frac{1}{2}}$ and $U^{+}_{j+\frac{1}{2}}$ are obtained
from a high order compact-WENO reconstruction, which will be
discussed in the following subsections.

\subsection{Compact-WENO reconstruction for scalars}
\label{sec2.2} 

For simplicity, we consider uniform grids with cell
size $\Delta x_j=h=\frac{b-a}{N}, \forall j$ in this paper. We first
review the finite volume compact reconstruction proposed in
\cite{pirozzoli2002conservative}. For a scalar variable $u(x)$, a compact representation 
around the grid node $x_{j+\frac{1}{2}}$ can be written as
\begin{equation}\label{compactrec}
\sum_{l=-L_1}^{L_2}\alpha_l\tilde{u}_{j+\frac{1}{2}+l}=\sum_{m=-M_1}^{M_2}a_m\bar{u}_{j+m},
\end{equation}
where $\tilde{u}_{j+\frac{1}{2}}$ denotes the reconstruction value of $u(x)$ at the grid node $x_{j+\frac12}$. Assuming that the
function $u$ can be expanded by Taylor series up to $K$-th order around
$x_{j+\frac{1}{2}}$
\begin{equation}\label{Taylor}
u(x)=\sum_{n=0}^{K-1}u_{j+\frac{1}{2}}^{(n)}\frac{(x-x_{j+\frac{1}{2}})^n}{n!}+O(h^K),
\end{equation}
we have
\begin{equation}
\tilde{u}_{j+\frac{1}{2}+l}=\sum_{n=0}^{K-1}u_{j+\frac{1}{2}}^{(n)}
                           \frac{l^n}{n!}h^n+O(h^K),
\end{equation}
\begin{equation}
\bar{u}_{j+m}=\sum_{n=0}^{K-1}u_{j+\frac{1}{2}}^{(n)}\frac{1}{(n+1)!}
             [m^{n+1}-(m-1)^{n+1}]h^n+O(h^K).
\end{equation}
A fifth-order compact upwind scheme in this class is for
$K=5$, which can yield the following scheme by taking $L_1=L_2=M_1=M_2=1$ in (\ref{compactrec}),
\begin{equation}\label{5th-order1}
 \frac{3}{10}\tilde{u}_{j-\frac{1}{2}}+\frac{6}{10}\tilde{u}_{j+\frac{1}{2}}+\frac{1}{10}\tilde{u}_{j+\frac{3}{2}}
=\frac{1}{30}\bar{u}_{j-1}+\frac{19}{30}\bar{u}_{j}+\frac{10}{30}\bar{u}_{j+1}.
\end{equation}
Symmetrically, we also have
\begin{equation}\label{5th-order2}
 \frac{1}{10}\tilde{u}_{j-\frac{1}{2}}+\frac{6}{10}\tilde{u}_{j+\frac{1}{2}}+\frac{3}{10}\tilde{u}_{j+\frac{3}{2}}
=\frac{10}{30}\bar{u}_{j}+\frac{19}{30}\bar{u}_{j+1}+\frac{1}{30}\bar{u}_{j+2}.
\end{equation}

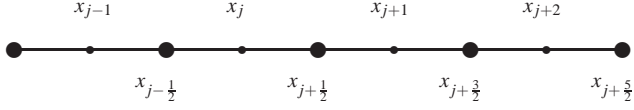
\begin{figure}
\setlength{\unitlength}{1mm}
\begin{picture}(10,20)
  \put(10,10){\line(10,0){80}}
  \multiput(10,10)(20,0){5}{\circle*{2}}
  \multiput(20,10)(20,0){4}{\circle*{1}}
  \put(18,15){$x_{j-1}$}
  \put(38,15){$x_{j}$}
  \put(57,15){$x_{j+1}$}
  \put(77,15){$x_{j+2}$}
  \put(26,5){$x_{j-\frac{1}{2}}$}
  \put(46,5){$x_{j+\frac{1}{2}}$}
  \put(66,5){$x_{j+\frac{3}{2}}$}
  \put(86,5){$x_{j+\frac{5}{2}}$}
\end{picture}\caption{Candidate stencils for interior points}\label{Fig-1}
\end{figure}

These classical fifth order linear finite volume compact schemes
(\ref{5th-order1}) and (\ref{5th-order2}) based on smaller stencils 
are very accurate and keep good resolutions in smooth regions, but
unacceptable non-physical oscillations are generated when they are
directly applied to problems with discontinuities and
the amplitude would not decrease as the grid nodes are refined. 

In the following, we adopt the main idea of \cite{ghosh2012compact} to form
a nonlinear finite volume compact-WENO scheme. 
For a fifth order finite volume compact-WENO scheme, 
three third-order compact stencils will be used as candidates, as shown in Fig.\ref{Fig-1}.
From (\ref{compactrec}), for the three candidate stencils, we have
\begin{equation}
\label{candidates}
\begin{aligned}
&\frac{2}{3}{u}_{j-\frac{1}{2}}^{(0)}+\frac{1}{3}{u}_{j+\frac{1}{2}}^{(0)}=\frac{1}{6}(\bar{u}_{j-1}+5\bar{u}_{j}),&\\
&\frac{1}{3}{u}_{j-\frac{1}{2}}^{(1)}+\frac{2}{3}{u}_{j+\frac{1}{2}}^{(1)}=\frac{1}{6}(5\bar{u}_{j}+\bar{u}_{j+1}),&\\
&\frac{2}{3}{u}_{j+\frac{1}{2}}^{(2)}+\frac{1}{3}{u}_{j+\frac{3}{2}}^{(2)}=\frac{1}{6}(\bar{u}_{j}+5\bar{u}_{j+1}).&\\
\end{aligned}
\end{equation}
Given the cell averages $\{\bar{u}_j\}$, a nonlinear weighted combination of (\ref{candidates}) will result in 
\begin{equation}
\begin{aligned}\label{CWENO-FV}
&\frac{2\omega_0+\omega_1}{3}\tilde{u}_{j-\frac{1}{2}}+
\frac{\omega_0+2(\omega_1+\omega_2)}{3}\tilde{u}_{j+\frac{1}{2}}
+\frac{1}{3}\omega_2\tilde{u}_{j+\frac{3}{2}}&\\
=&\frac{1}{6}\omega_0\bar{u}_{j-1}+\frac{5(\omega_0+\omega_1)+\omega_2}{6}\bar{u}_j
+\frac{\omega_1+5\omega_2}{6}\bar{u}_{j+1},&
\end{aligned}
\end{equation}
where the nonlinear weights $\{\omega_0, \omega_1, \omega_2\}$ will be specified later. Let $u_{j+\frac{1}{2}}^{-}$ denote the fifth order approximation
of the nodal value $u(x_{j+\frac{1}{2}},t^n)$ in cell $I_j$. From (\ref{CWENO-FV}), a fifth order
compact-WENO approximation of $u_{j+\frac{1}{2}}^{-}$ based on the stencil
$\{x_{j-1},x_{j},x_{j+1}\}$ is given by
\begin{equation}
u_{j+\frac{1}{2}}^{-}=\tilde{u}_{j+\frac{1}{2}}.
\end{equation}

In smooth regions, the finite volume compact-WENO scheme
yields a fifth-order upwind compact scheme
\cite{pirozzoli2002conservative}. To construct a nonlinear compact scheme, we choose 
a set of normalized nonlinear weights $\omega_k$ \cite{borges2008improved,castro2011high} by taking
\begin{equation}
\label{weight}
\omega_k=\frac{\alpha_k^z}{\sum_{l=0}^2\alpha_l^z},\quad
\alpha_k^z=c_k\left(1+\left(\frac{\tau_5}{\beta_k+\epsilon}\right)^2
\right), \quad k=0,1,2,
\end{equation}
where $\tau_5=|\beta_{2}-\beta_{0}|$ and
the classical smooth indicators $\beta_k$ $(k=0,1,2)$ \cite{shu1998essentially} are given by
\begin{equation*}
\begin{aligned}
&\beta_{0}=\frac{13}{12}(\bar{u}_{j-2}-2\bar{u}_{j-1}+\bar{u}_{j})^2+\frac{1}{4}(\bar{u}_{j-2}-4\bar{u}_{j-1}+3\bar{u}_{j})^2,&\\
&\beta_{1}=\frac{13}{12}(\bar{u}_{j-1}-2\bar{u}_{j}+\bar{u}_{j+1})^2+\frac{1}{4}(\bar{u}_{j-1}-\bar{u}_{j+1})^2,&\\
&\beta_{2}=\frac{13}{12}(\bar{u}_{j}-2\bar{u}_{j+1}+\bar{u}_{j+2})^2+\frac{1}{4}(3\bar{u}_{j}-4\bar{u}_{j+1}+\bar{u}_{j+2})^2.&
\end{aligned}
\end{equation*}
$\epsilon$ is a small positive number to avoid the denominator
to be $0$, in our numerical tests, we take $\epsilon=10^{-13}$. The optimal
linear weights are $c_0=\frac{2}{10},c_1=\frac{5}{10},c_2=\frac{3}{10}$.
The weights \eqref{weight} are denoted as WENO-Z weights, which can avoid accuracy
lost at critical points \cite{borges2008improved}. 


For the scalar case, a tri-diagonal system (\ref{CWENO-FV}) is solved to get $u_{j+\frac{1}{2}}^{-}$.
Let $u_{j+\frac{1}{2}}^{+}$ denote the fifth order approximation
of the nodal value $u(x_{j+\frac{1}{2}},t^n)$ from cell $I_{j+1}$, following a similar procedure as above, it can be obtained by the stencil $\{x_{j},x_{j+1},x_{j+2}\}$. Similar to classical WENO schemes, near critical points, the corresponding weight approaches to $0$ and the system reduces to a biased bidiagonal system. Across the discontinuities, the fifth-order scheme yields a third-order compact scheme which has higher resolution than a third order non-compact scheme.

\subsection{Compact-WENO reconstruction for systems}
\label{sec2.3} In this subsection, we will describe the finite
volume compact-WENO reconstruction for compressible Euler equations.
The scalar algorithm (\ref{CWENO-FV}) in the previous subsection will be applied along each characteristic
field. As we know, the conservative Euler equations (\ref{1DEuler}) can also be written in
a quasi-linear form \cite{toro2009riemann}
\begin{equation}
U_t+A(U)U_x=0,
\end{equation}
where the coefficient matrix $A(U)$ is the Jacobian matrix of $F(U)$ and can
be written as
\begin{displaymath}
A(U)=\left[ \begin{array}{ccc}
0&1&0\\
-\frac{1}{2}(\gamma-3)(\frac{u_2}{u_1})^2&(3-\gamma)(\frac{u_2}{u_1})&\gamma-1\\

-\frac{\gamma u_2u_3}{u_1^2}+(\gamma-1)(\frac{u_2}{u_1})^3&
\frac{\gamma u_3}{u_1}-\frac{3}{2}(\gamma-1)(\frac{u_2}{u_1})^2
&\gamma (\frac{u_2}{u_1})
\end{array} \right].
\end{displaymath}
The total specific enthalpy $H$ is related to the
specific enthalpy $h$, they are
\begin{equation}
H=\frac{E+p}{\rho}\equiv \frac{1}{2}u^2+h,\quad h=e+\frac{p}{\rho}.
\end{equation}
The eigenvalues of the Jacobian matrix $A(U)$ are
\begin{equation}
\lambda_1=u-a,\quad \lambda_2=u, \quad \lambda_3=u+a,
\label{eigen}
\end{equation}
where $a$ is the speed of sound (\ref{sspeed}). The corresponding
right eigenvectors are
\begin{displaymath}\label{right-ev}
r^{(1)}=\left[\begin{array}{ccc}
 1\\u-a\\H-ua
  \end{array}\right], \quad
r^{(2)}=\left[\begin{array}{ccc}
 1\\u\\\frac{1}{2}u^2
  \end{array}\right], \quad
  r^{(3)}=\left[\begin{array}{ccc}
 1\\u+a\\H+ua
  \end{array}\right].
\end{displaymath}
A matrix $R(U)$ is formed by the right eigenvectors
\begin{equation}
  R(U)=(r^{(1)},r^{(2)},r^{(3)}).
  \label{reigenv}
\end{equation}
Letting $L(U)=R(U)^{-1}$, then
\begin{equation*}
L(U)A(U)R(U)=\Lambda,
\end{equation*}
here $\Lambda$ is the diagonal matrix
$\Lambda=diag(\lambda_1,\lambda_2,\lambda_3)$. Denoting a vector $l^{(k)}$ to
be the $k$-th row in $L(U)$, then
\begin{equation}
\begin{aligned}
&l^{(1)}=\frac{1}{2}(c_2+u/a,-c_1u-1/a,c_1),&\\
&l^{(2)}=(1-c_2,c_1u,-c_1),&\\
&l^{(3)}=\frac{1}{2}(c_2-u/a,-c_1u+1/a,c_1),&
\end{aligned}
\label{leigenv}
\end{equation}
where $c_1=(\gamma-1)/a^2$, $c_2=\frac{1}{2}u^2c_1$.

At the grid node $x_{j+\frac{1}{2}}$,
denoting $U_{j+\frac{1}{2}}^{-}$ as the fifth order approximation
of the nodal values $U(x_{j+\frac{1}{2}},t^n)$ at time $t^n$ within
the cells $I_j$, the scalar finite volume compact-WENO
reconstruction (\ref{CWENO-FV}) is applied to each component of the
characteristic variables $\bar{V}_j=L(U^{Roe}_{j+\frac12})\bar{U}_j$ to obtain
$U_{j+\frac{1}{2}}^{-}$, where $U^{Roe}_{j+\frac12}$ denotes the Roe-average
of the cell-average values $\bar U_j$ and $\bar U_{j+1}$ \cite{toro2009riemann}.

For the systems, a characteristic-wise finite volume compact-WENO scheme
consists of the following steps:
\begin{enumerate}
\item At each grid node $x_{j+\frac12}$, computing the eigenvalues (\ref{eigen}) and
eigenvectors (\ref{reigenv}) and (\ref{leigenv}) by using $U^{Roe}_{j+\frac12}$.
\item Along each characteristic field, computing the weights (\ref{weight}) from characteristic 
variables $\bar{V}_j=L(U^{Roe}_{j+\frac12})\bar{U}_j$.
\item Applying the scalar reconstruction (\ref{CWENO-FV}) at each characteristic field
\begin{equation}\label{cha-1}
\begin{aligned}
a_{j+\frac{1}{2}}^{(k)}l_{j+\frac{1}{2}}^{(k)}\tilde{U}_{j-\frac{1}{2}}+
b_{j+\frac{1}{2}}^{(k)}l_{j+\frac{1}{2}}^{(k)}\tilde{U}_{j+\frac{1}{2}}+
c_{j+\frac{1}{2}}^{(k)}l_{j+\frac{1}{2}}^{(k)}\tilde{U}_{j+\frac{3}{2}}\\=
d_{j+\frac{1}{2}}^{(k)}l_{j+\frac{1}{2}}^{(k)}\bar{U}_{j-1}+
e_{j+\frac{1}{2}}^{(k)}l_{j+\frac{1}{2}}^{(k)}\bar{U}_{j}+
f_{j+\frac{1}{2}}^{(k)}l_{j+\frac{1}{2}}^{(k)}\bar{U}_{j+1}
\end{aligned}
\end{equation}
for $k=1,2,3$. The coefficients $a_{j+\frac{1}{2}}^{(k)},
b_{j+\frac{1}{2}}^{(k)}, c_{j+\frac{1}{2}}^{(k)},
d_{j+\frac{1}{2}}^{(k)}, e_{j+\frac{1}{2}}^{(k)},
f_{j+\frac{1}{2}}^{(k)}$ corresponding to the coeffcients in (\ref{CWENO-FV}),
which can be obtained from Step 2.
\item Rewriting the equation (\ref{cha-1}) to be
\begin{equation}\label{1DEuler-FVC}
 A_{j+\frac{1}{2}}\tilde{U}_{j-\frac{1}{2}}+
 B_{j+\frac{1}{2}}\tilde{U}_{j+\frac{1}{2}}+
 C_{j+\frac{1}{2}}\tilde{U}_{j+\frac{3}{2}}=
 D_{j+\frac{1}{2}}\bar{U}_{j-1}+
 E_{j+\frac{1}{2}}\bar{U}_{j}+
 F_{j+\frac{1}{2}}\bar{U}_{j+1}
\end{equation}
where
\begin{displaymath}
A_{j+\frac{1}{2}}=\left[\begin{array}{ccc}
 a_{j+\frac{1}{2}}^{(1)}l_{j+\frac{1}{2}}^{(1)}\\
 a_{j+\frac{1}{2}}^{(2)}l_{j+\frac{1}{2}}^{(2)}\\
 a_{j+\frac{1}{2}}^{(3)}l_{j+\frac{1}{2}}^{(3)}
  \end{array}\right],
B_{j+\frac{1}{2}}=\left[\begin{array}{ccc}
 b_{j+\frac{1}{2}}^{(1)}l_{j+\frac{1}{2}}^{(1)}\\
 b_{j+\frac{1}{2}}^{(2)}l_{j+\frac{1}{2}}^{(2)}\\
 b_{j+\frac{1}{2}}^{(3)}l_{j+\frac{1}{2}}^{(3)}
  \end{array}\right],
C_{j+\frac{1}{2}}=\left[\begin{array}{ccc}
c_{j+\frac{1}{2}}^{(1)}l_{j+\frac{1}{2}}^{(1)}\\
c_{j+\frac{1}{2}}^{(2)}l_{j+\frac{1}{2}}^{(2)}\\
c_{j+\frac{1}{2}}^{(3)}l_{j+\frac{1}{2}}^{(3)}
  \end{array}\right],
\end{displaymath}
\begin{displaymath}
D_{j+\frac{1}{2}}=\left[\begin{array}{ccc}
 d_{j+\frac{1}{2}}^{(1)}l_{j+\frac{1}{2}}^{(1)}\\
 d_{j+\frac{1}{2}}^{(2)}l_{j+\frac{1}{2}}^{(2)}\\
 d_{j+\frac{1}{2}}^{(3)}l_{j+\frac{1}{2}}^{(3)}
  \end{array}\right],
E_{j+\frac{1}{2}}=\left[\begin{array}{ccc}
 e_{j+\frac{1}{2}}^{(1)}l_{j+\frac{1}{2}}^{(1)}\\
 e_{j+\frac{1}{2}}^{(2)}l_{j+\frac{1}{2}}^{(2)}\\
 e_{j+\frac{1}{2}}^{(3)}l_{j+\frac{1}{2}}^{(3)}
  \end{array}\right],
F_{j+\frac{1}{2}}=\left[\begin{array}{ccc}
f_{j+\frac{1}{2}}^{(1)}l_{j+\frac{1}{2}}^{(1)}\\
f_{j+\frac{1}{2}}^{(2)}l_{j+\frac{1}{2}}^{(2)}\\
f_{j+\frac{1}{2}}^{(3)}l_{j+\frac{1}{2}}^{(3)}
  \end{array}\right].
\end{displaymath}
Noticing that $l_{j+\frac{1}{2}}^{(k)}$ for $k=1,2,3$ are all vectors,
a $3\times 3$ block tri-diagonal system (\ref{1DEuler-FVC}) is solved
by using the chasing method \cite{gong1997lu} to obtain $\tilde{U}_{j+\frac{1}{2}}$.
\end{enumerate}
From (\ref{1DEuler-FVC}), a fifth order
compact-WENO approximation of $U_{j+\frac{1}{2}}^{-}$ based on the stencil
$\{x_{j-1},x_{j},x_{j+1}\}$ is given by
\begin{equation}
U_{j+\frac{1}{2}}^{-}=\tilde{U}_{j+\frac{1}{2}}.
\end{equation}
Letting $U_{j+\frac{1}{2}}^{+}$ denote the fifth order approximation
of the nodal value $U(x_{j+\frac{1}{2}},t^n)$ from cell $I_{j+1}$, 
following a similar procedure as above, it can be obtained by the stencil $\{x_{j},x_{j+1},x_{j+2}\}$.

\subsection{Positivity-preserving and HLLC approximate Riemann solver}
For compressible Euler equations, the Riemann solutions
consist of a contact wave and two acoustic waves, either may be
a shock or a rarefaction wave. In \cite{godunov1959difference},
Godunov presented a first-order upwind scheme which could capture
shock waves without introducing nonphysical spurious oscillations.
The important part of the Godunov-type method is the exact or
approximate solutions of the Riemann problem. Exact solutions to the
Riemann problem is difficult or too expensive to be obtained. 
Approximate Riemann solvers are often used to build Godunov-type
numerical schemes. The HLLC approximate Riemann solver
\cite{toro2009riemann,batten1997choice} has been proved to be very
simple, reliable and robust. In \cite{batten1997choice}, Batten et
al. proposed an appropriate choice of the acoustic wavespeeds
required by HLLC and proved that the resulting numerical method
resolves isolated shock and contact waves exactly, and is positively
conservative which will be reviewed in the following.

For the HLLC flux, two averaged states
$U^{*}_l, U^{*}_r$ between the two acoustic waves $S_L, S_R$ are
considered, which are separated by the contact wave whose speed is
denoted by $S_M$. The approximate Riemann solution with two states $U_l$ and $U_r$ is
defined as
\begin{equation}
U^{HLLC}=\left\{\begin{array}{ll}
U_l, &\quad \textrm{if $S_L>0$},\\
U^{*}_l, &\quad \textrm{if $S_L\leq 0<S_M$},\\
U^{*}_r, &\quad \textrm{if $S_M\leq 0\leq S_R$},\\
U_r, &\quad \textrm{if $S_R<0$}.
\end{array}\right.
\end{equation}
The corresponding flux is
\begin{equation}\label{HLLC}
\hat F^{HLLC}(U_l, U_r)=\left\{\begin{array}{ll}
F_l, &\quad \textrm{if $S_L>0$},\\
F^{*}_l=F_l+S_L(U^{*}_l-U_l), &\quad \textrm{if $S_L\leq 0<S_M$},\\
F^{*}_r=F_r+S_r(U^{*}_r-U_r), &\quad \textrm{if $S_M\leq 0\leq S_R$},\\
F_r, &\quad \textrm{if $S_R<0$}.
\end{array}\right.
\end{equation}
where $F_l=F(U_l)$ and $F_r=F(U_r)$, similarly for the following variables with subscripts $l$ and $r$.

To determine $U^{*}_l$, the following assumption has been made \cite{batten1997choice}
\begin{equation}
S_M=u^{*}_l=u^{*}_r=u^{*}.
\end{equation}
which gives the contact wave velocity
\begin{equation}\label{contact-v}
S_M=\frac{\rho_ru_r(S_R-u_r)-\rho_lu_l(S_L-u_l)+p_l-p_r}
         {\rho_l(S_R-u_r)-\rho_l(S_L-u_l)}.
\end{equation}
and
\begin{equation}\label{star-l}
\left\{\begin{array}{l}
 \rho^{*}_l=\rho_l\frac{S_L-u_l}{S_L-S_M},\\
 p^{*}=\rho_l(u_l-S_L)(u_l-S_M)+p_l,\\
 \rho^{*}_lu^{*}_l=\frac{(S_L-u_l)\rho_lu_l+(p^{*}-p_l)}
                     {S_L-S_M},\\
E^{*}_l=\frac{(S_L-u_l)E_l-p_lu_l+p^{*}S_M}
                     {S_L-S_M}.
\end{array}\right.
\end{equation}
The right star state can be obtained symmetrically.

To make the scheme preserving positivity, the acoustic wavespeeds
are computed from
\begin{equation}\label{acoustic ws}
S_L=\min{[u_l-a_l,\tilde{u}^{*}-\tilde{a}^{*}]}, \quad
S_R=\min{[u_r+a_r,\tilde{u}^{*}+\tilde{a}^{*}]},
\end{equation}
where
\begin{equation}
\left\{\begin{array}{l}
\tilde{u}^{*}=\frac{u_l+u_rR_\rho}{1+R_\rho},\\
\tilde{a}^{*}=\sqrt{(\gamma-1)[\tilde{H}^{*}-\frac{1}{2}\tilde{u}^{*2}]},\\
\tilde{H}^{*}=\frac{(H_l+H_rR_\rho)}{1+R_\rho},\\
R_\rho=\sqrt{\frac{\rho_r}{\rho_l}}.
\end{array}\right.
\end{equation}
Defining the set of physically realistic states as those with positive
densities and internal energies by
\begin{equation}
G=\left\{\begin{array}{l}
U=\left[\begin{array}{ccc}
\rho\\
\rho u\\
E \end{array}\right],\rho>0,e=\frac{E}{\rho}-\frac{u^2}{2}>0
\end{array}\right\},
\end{equation}
then $G$ is a convex set \cite{batten1997choice}.

We now consider a first order finite volume scheme
\begin{equation}\label{1DEulerFV-First}
\bar{U}^{n+1}_{j}=\bar{U}^{n}_{j}-
\lambda[\hat{F}(\bar{U}^{n}_{j},\bar{U}^{n}_{j+1})-
\hat{F}(\bar{U}^{n}_{j-1},\bar{U}^{n}_{j})],
\end{equation}
where $\hat{F}(\cdot,\cdot)$ is a HLLC flux and
$\lambda=\frac{\Delta t}{h}$. For a positively conservative scheme
(\ref{1DEulerFV-First}), if $\bar{U}^{n}_{j}, j=1,\cdots,N$, is
contained in $G$, then $\bar{U}^{n+1}_{j}, j=1,\cdots,N$, will also
lie inside $G$. This would be guaranteed by proving the intermediate
states $U^{*}_l \in G$ if we have $U_l \in G$, and proving $U^{*}_r \in G$ if we have $U_r \in
G$, because $G$ is a convex set (for details see \cite{batten1997choice}). 

In the following,
we will show the left star state $U^{*}_l \in G$, while similar arguments hold for
the right star state $U^{*}_r \in G$. That is, when $U_l \in G$, which is equivalent to
\begin{equation}
\rho_l>0, \quad E_l-\frac{1}{2}\rho_lu_l^2>0,
\end{equation}
we will have
\begin{equation}
\rho^{*}_l>0,
\end{equation}
and
\begin{equation}\label{E-star}
E^{*}_l-\frac{1}{2}\rho_l^{*}{u_l^{*}}^2>0.
\end{equation}
From (\ref{star-l}), we can get
\begin{equation}
\rho^{*}_l=\rho_l\frac{S_L-u_l}{S_L-S_M}.
\end{equation}
$S_M$ in (\ref{contact-v}) is an averaged velocity, from (\ref{acoustic ws}) we have
\begin{equation}\label{inequ-v}
S_L<S_M, \quad S_L<u_l,
\end{equation}
and $\rho^{*}_l>0$ is easily obtained. Using relations (\ref{star-l}) and
(\ref{inequ-v}), (\ref{E-star}) can be rewritten as
\begin{equation}
(u_l-S_L)E_l+p_lu_l-p^{*}S_M+
\frac{((S_L-u_l)\rho_lu_l-p_l+p^{*})^2}{2\rho_l(S_L-u_l)}>0,
\end{equation}
which is equivalent to
\begin{equation}
\frac{1}{2}\rho_l(S_M-u_l)^2-p_l\frac{S_M-u_l}{u_l-S_L}+\frac{p_l}{\gamma-1}>0.
\end{equation}
To guarantee this inequality for any value of $S_M-u_l$, the
discriminant of the above quadratic function of $S_M-u_l$ should be
negative, which gives the following condition
\begin{equation}
\frac{p_l^2}{(u_l-S_L)^2}-2\rho_l^2e_l<0,
\end{equation}
that is
\begin{equation}
S_L<u_l-\frac{p_l}{\rho_l\sqrt{2e_l}}=
u_l-\sqrt{\frac{\gamma-1}{2\gamma}}\sqrt{\frac{\gamma p_l}{\rho_l}}
= u_l-\sqrt{\frac{\gamma-1}{2\gamma}}a_l,
\end{equation}
which is always satisfied with acoustic wavespeeds (\ref{acoustic ws}).

\begin{remark}
As in \cite{zhang2010positivity}, if we consider the first order finite volume scheme (\ref{1DEulerFV-First}) with the HLLC flux (\ref{HLLC}) and averaged intermediate states (\ref{star-l}) for solving the compressible Euler equations (\ref{1DEuler}), this first order scheme is positivity-preserving with the choice of the acoustic wavespeeds (\ref{acoustic ws}) and under the following CFL condition
\begin{equation}
\lambda \left\| |u|+a \right\|_\infty \leq1.
\end{equation}
\end{remark}

Now to design a positivity-preserving fifth-order finite
volume compact-WENO scheme,
we first consider the Euler forward time discretization for equation
(\ref{1DEulerFV-b})
\begin{equation}\label{1DEulerFV-c}
\bar{U}^{n+1}_j=\bar{U}^{n}_j-\lambda(\hat{F}(U_{j+\frac{1}{2}}^{-},U_{j+\frac{1}{2}}^{+})-
\hat{F}(U_{j-\frac{1}{2}}^{-},U_{j-\frac{1}{2}}^{+}),
\end{equation}
where $\hat{F}$ is the HLLC flux,
$U_{j+\frac{1}{2}}^{-}$ and $U_{j+\frac{1}{2}}^{+}$ are obtained by
using the compact-WENO reconstructions in Section \ref{sec2.3}. We
employ the idea in \cite{zhang2011maximum,zhang2010positivity} to construct high
order finite volume compact-WENO schemes to preserve positive
density and pressure or internal energy for the Euler system.

We consider a polynomial vector $Q_j(x)=(\rho_j(x)),(\rho u)_j(x),E_j(x))^{T}$
with degree $K (K\geq 2)$ on $I_j$, such that
\begin{equation}
\label{polyQ}
U_{j-\frac{1}{2}}^{+}=Q_j(x_{j-\frac{1}{2}}), \quad
U_{j+\frac{1}{2}}^{-}=Q_j(x_{j+\frac{1}{2}}), \quad
\bar{U}_{j+q}=\frac{1}{\Delta x}\int_{x_{j+q-\frac{1}{2}}}^{x_{j+q+\frac{1}{2}}}Q_j(x)dx,
\end{equation}
here $q$ is related to $K$, for example, if $K=5$, $q=-1, 0, 1$.
By using the
$M$-point Gauss-Lobatto quadrature rule on $I_j$ and choose
the quadrature points as
$S_j=\{x_{j-\frac{1}{2}}=\hat{x}_j^1,\cdots,\hat{x}_j^M=x_{j+\frac{1}{2}}\}$,
a sufficient condition for $\bar{U}_j^{n+1} \in G$ is
$Q_j(\hat{x}_j^{\alpha})\in G$ for $\alpha=1,2,\cdots,M$, under a
suitable CFL condition. We denote $\hat{\omega}_\alpha$ as the
Legendre Gauss-Lobatto quadrature weights on the interval
$[-\frac{1}{2},\frac{1}{2}]$, and
$\sum_{\alpha=1}^M\hat{\omega}_\alpha=1$ with $2M-3\leq K$.
Following \cite{zhang2010positivity}, we have
\begin{theorem}
Consider the high order ($K\ge2$) finite volume compact-WENO scheme (\ref{1DEulerFV-c}) with the HLLC flux (\ref{HLLC}) for solving the compressible Euler equations (\ref{1DEuler}). The first order scheme (\ref{1DEulerFV-First}) with the HLLC flux would be positivity-preserving under the condition (\ref{acoustic ws}) with the averaged intermediate states (\ref{star-l}). If the reconstructed polynomial vector $Q_j(x)=(\rho_j(x)),(\rho u)_j(x),E_j(x)$ (\ref{polyQ}) satisfies $Q_j(\hat{x}_j^{\alpha})\in G$, $\forall j$, the scheme (\ref{1DEulerFV-c}) is positivity-preserving ($\bar{U}_j^{n+1}\in G$) under the the CFL condition
\begin{equation}\label{CFL-CD}
\lambda \left\| |u|+a \right\|_\infty \leq \omega_1.
\end{equation}
\end{theorem}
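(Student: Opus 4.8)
The plan is to follow the framework of Zhang and Shu \cite{zhang2010positivity,zhang2011maximum} and rewrite the updated cell average produced by (\ref{1DEulerFV-c}) as a convex combination of states each of which lies in the convex invariant set $G$. First I would exploit the exactness of the $M$-point Gauss--Lobatto rule for the reconstruction polynomial $Q_j$: $\bar U_j^{n}=\frac{1}{\Delta x}\int_{I_j}Q_j(x)\,dx=\sum_{\alpha=1}^{M}\hat{\omega}_{\alpha}Q_j(\hat{x}_j^{\alpha})$. Since $\hat{x}_j^{1}=x_{j-\frac12}$ and $\hat{x}_j^{M}=x_{j+\frac12}$, the two endpoint contributions are exactly the reconstructed interface traces, $Q_j(\hat{x}_j^{1})=U_{j-\frac12}^{+}$ and $Q_j(\hat{x}_j^{M})=U_{j+\frac12}^{-}$ by (\ref{polyQ}), and the Gauss--Lobatto weights are symmetric, $\hat{\omega}_1=\hat{\omega}_M$.

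Next I would substitute this decomposition into (\ref{1DEulerFV-c}) and add and subtract the intermediate HLLC flux $\hat{F}(U_{j-\frac12}^{+},U_{j+\frac12}^{-})$. After regrouping, the update takes the form
\begin{equation*}
\bar U_j^{n+1}=\sum_{\alpha=2}^{M-1}\hat{\omega}_{\alpha}\,Q_j(\hat{x}_j^{\alpha})+\hat{\omega}_1\,\mathcal H_L+\hat{\omega}_M\,\mathcal H_R,
\end{equation*}
with
\begin{gather*}
\mathcal H_L=U_{j-\frac12}^{+}-\frac{\lambda}{\hat{\omega}_1}\left(\hat{F}(U_{j-\frac12}^{+},U_{j+\frac12}^{-})-\hat{F}(U_{j-\frac12}^{-},U_{j-\frac12}^{+})\right),\\
\mathcal H_R=U_{j+\frac12}^{-}-\frac{\lambda}{\hat{\omega}_M}\left(\hat{F}(U_{j+\frac12}^{-},U_{j+\frac12}^{+})-\hat{F}(U_{j-\frac12}^{+},U_{j+\frac12}^{-})\right).
\end{gather*}
A direct check shows that the intermediate flux cancels, the flux difference $-\lambda(\hat{F}_{j+\frac12}-\hat{F}_{j-\frac12})$ of (\ref{1DEulerFV-c}) is recovered, and $\sum_{\alpha=2}^{M-1}\hat{\omega}_{\alpha}+\hat{\omega}_1+\hat{\omega}_M=\sum_{\alpha=1}^{M}\hat{\omega}_{\alpha}=1$, so the right-hand side is a genuine convex combination.

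The key observation is that $\mathcal H_L$ and $\mathcal H_R$ each have exactly the structure of a single-cell update of the first order HLLC scheme (\ref{1DEulerFV-First}), but with the effective ratio $\lambda/\hat{\omega}_1$ in place of $\lambda$: for $\mathcal H_L$ the ``cell value'' is $U_{j-\frac12}^{+}$, its left and right neighbour values are $U_{j-\frac12}^{-}$ and $U_{j+\frac12}^{-}$, and the two fluxes are consistent HLLC fluxes of these states; similarly for $\mathcal H_R$. By the positivity property of the first order scheme recalled in the Remark, $\mathcal H_L,\mathcal H_R\in G$ as soon as all the states entering them lie in $G$ and $\frac{\lambda}{\hat{\omega}_1}\left\| |u|+a \right\|_\infty\le 1$, that is $\lambda\left\| |u|+a \right\|_\infty\le\hat{\omega}_1$, which is precisely (\ref{CFL-CD}). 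Here $U_{j\pm\frac12}^{\mp}$ belong to $G$ by the hypothesis $Q_j(\hat{x}_j^{\alpha})\in G$ evaluated at the endpoints $\alpha=1,M$, while $U_{j-\frac12}^{-}$ and $U_{j+\frac12}^{+}$ are the interface traces coming from the neighbouring cells $I_{j-1}$ and $I_{j+1}$, again in $G$ by the same hypothesis applied there. Combining this with $Q_j(\hat{x}_j^{\alpha})\in G$ for the interior nodes $\alpha=2,\dots,M-1$, convexity of $G$ yields $\bar U_j^{n+1}\in G$, which is the claim.

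The main obstacle is the algebra of the second step: one must verify that, after inserting the intermediate flux, the two boundary terms reorganize \emph{exactly} into well-formed first order HLLC cell updates, so that the first order positivity result --- whose proof hinges on the specific acoustic wavespeed estimates (\ref{acoustic ws}) evaluated at those very states, as shown above --- is literally applicable with $\lambda$ replaced by $\lambda/\hat{\omega}_1$. A secondary point to state carefully is that $\left\| |u|+a \right\|_\infty$ in (\ref{CFL-CD}) must be read over the reconstructed interface states, not merely over the cell averages; this is harmless since all of them lie in $G$. Everything else is routine: exactness of the Gauss--Lobatto quadrature for $Q_j$, the cancellation of the intermediate flux, and convexity of the set $G$ established above.
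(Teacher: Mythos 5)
Your proposal is correct and follows essentially the same route as the paper's proof: Gauss--Lobatto exactness to write $\bar U_j^n$ as a convex combination of $Q_j(\hat{x}_j^{\alpha})$, insertion of the intermediate flux $\hat{F}(U_{j-\frac12}^{+},U_{j+\frac12}^{-})$, identification of the two boundary terms as first order HLLC updates with $\lambda/\hat{\omega}_1$, and convexity of $G$. Your $\mathcal H_L$ and $\mathcal H_R$ are exactly the paper's $H_1$ and $H_M$.
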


\begin{proof}
The proof is similar to that in \ \cite{zhang2010positivity}. By using the $M$-point Gauss-Lobatto rule, the cell average $\bar{U}_j$ can be written as
\begin{equation}
\bar{U}_j=\frac{1}{\Delta x}\int_{x_{j-\frac{1}{2}}}^{x_{j+\frac{1}{2}}}Q_j(x)dx=
\sum_{\alpha=1}^{M}\hat{\omega}_\alpha Q_j(\hat{x}_j^{\alpha}).
\end{equation}
Noticing that $U_{j-\frac{1}{2}}^{+}=Q_j(\hat{x}_j^1)$ and $U_{j+\frac{1}{2}}^{-}=Q_j(\hat{x}_j^M)$, $\forall j$, the scheme (\ref{1DEulerFV-c}) can be rearranged as follows
 \begin{eqnarray*}
 \bar{U}_j^{n+1}&=&
\sum_{\alpha=1}^{M}\hat{\omega}_\alpha Q_j(\hat{x}_j^{\alpha})-
\lambda(\hat{F}(U_{j+\frac{1}{2}}^{-},U_{j+\frac{1}{2}}^{+})-\hat{F}(U_{j-\frac{1}{2}}^{+},U_{j+\frac{1}{2}}^{-})
+\hat{F}(U_{j-\frac{1}{2}}^{+},U_{j+\frac{1}{2}}^{-})-\hat{F}(U_{j-\frac{1}{2}}^{-},U_{j-\frac{1}{2}}^{+}))\\
&=&\sum_{\alpha=2}^{M-1}\hat{\omega}_\alpha Q_j(\hat{x}_j^{\alpha})+
\hat{\omega}_1\left(U_{j-\frac{1}{2}}^{+}-\frac{\lambda}{\hat{\omega}_1}
[\hat{F}(U_{j-\frac{1}{2}}^{+},U_{j+\frac{1}{2}}^{-})-
\hat{F}(U_{j-\frac{1}{2}}^{-},U_{j-\frac{1}{2}}^{+})]\right)\\
&&+
\hat{\omega}_M\left(U_{j+\frac{1}{2}}^{-}-\frac{\lambda}{\hat{\omega}_M}
[\hat{F}(U_{j+\frac{1}{2}}^{-},U_{j+\frac{1}{2}}^{+})-
\hat{F}(U_{j-\frac{1}{2}}^{+},U_{j+\frac{1}{2}}^{-})]\right)\\
&=&\sum_{\alpha=2}^{M-1}\hat{\omega}_\alpha Q_j(\hat{x}_j^{\alpha})+
\hat{\omega}_1H_1+\hat{\omega}_MH_M,
 \end{eqnarray*}
  where
 \begin{eqnarray*}
 H_1&=&U_{j-\frac{1}{2}}^{+}-\frac{\lambda}{\hat{\omega}_1}
[\hat{F}(U_{j-\frac{1}{2}}^{+},U_{j+\frac{1}{2}}^{-})-
\hat{F}(U_{j-\frac{1}{2}}^{-},U_{j-\frac{1}{2}}^{+})],\\
H_M&=&U_{j+\frac{1}{2}}^{-}-\frac{\lambda}{\hat{\omega}_M}
[\hat{F}(U_{j+\frac{1}{2}}^{-},U_{j+\frac{1}{2}}^{+})-
\hat{F}(U_{j-\frac{1}{2}}^{+},U_{j+\frac{1}{2}}^{-})].
 \end{eqnarray*}
The above two equations are both of the form
(\ref{1DEulerFV-First}), therefore $H_1$ and $H_M$ are in the set $G$ due to $U_{j+\frac{1}{2}}^{\pm}\in G, \forall j$ and the CFL condition (\ref{CFL-CD}) with the HLLC flux (\ref{HLLC}) and the acoustic wavespeeds (\ref{acoustic ws}). Now $\bar{U}_j^{n+1}\in G$ is proved since it is a convex combination of $H_1$, $H_M$ and $Q_j(\hat{x}_j^{\alpha})$ for $2\le \alpha \le M-1$, which are all in $G$.
\end{proof}

Similar to the approach in \cite{zhang2011maximum,zhang2010positivity}, the positivity-preserving limiter for the present scheme in the one-dimensional space will be constructed. The easy-implementation algorithm of WENO schemes in \cite{zhang2011maximum} will be adopted:
\begin{enumerate}
\item Set up a small positive parameter $\varepsilon=\min_j\{10^{-13}, \bar{\rho}_j^n\}$.
\item Compute the limiter
\begin{equation}
\theta_1=\min\left\{\frac{\bar{\rho}_j^n-\varepsilon}{\bar{\rho}_j^n-\rho_{min}},1\right\},
\end{equation}
where $\rho_{min}=\{\rho_{j+\frac{1}{2}}^{-},
\rho_{j-\frac{1}{2}}^{+}, \rho_{j}(x_j^{1*})\}$ and
\begin{equation}
\rho_j(x_j^{1*})=\frac{\bar{\rho}_j^n-\hat{\omega}_1\rho_{j-\frac{1}{2}}^{+}-
\hat{\omega}_M\rho_{j+\frac{1}{2}}^{-}}{1-2\hat{\omega}_1}.
\end{equation}
\item Modify the density by letting
\begin{equation}\label{limiter-rho}
\hat{\rho}_j(x)=\theta_1(\rho_j(x)-\bar{\rho}_j^n) +\bar{\rho}_j^n.
\end{equation}
Get $\hat{\rho}_{j+\frac{1}{2}}^{-}$ and
$\hat{\rho}_{j-\frac{1}{2}}^{+}$ from
\begin{eqnarray*}
&\hat{\rho}_{j+\frac{1}{2}}^{-}=\theta_1(\rho_{j+\frac{1}{2}}^{-}-\bar{\rho}_j^n)
+\bar{\rho}_j^n,\\
&\hat{\rho}_{j-\frac{1}{2}}^{+}=\theta_1(\rho_{j-\frac{1}{2}}^{+}-\bar{\rho}_j^n)
+\bar{\rho}_j^n.
\end{eqnarray*}
Denote
\begin{eqnarray*}
\hat{W}_j^1=\hat{U}_{j+\frac{1}{2}}^{-},\quad
\hat{W}_j^2=\hat{U}_{j-\frac{1}{2}}^{+}, \quad
\hat{W}_j^3=\frac{\bar{U}_j^n-\hat{\omega}_1\hat{U}_{j-\frac{1}{2}}^{+}-
\hat{\omega}_M\hat{U}_{j+\frac{1}{2}}^{-}}{1-2\hat{\omega}_1}.
\end{eqnarray*}
\item
Get $\theta_2=\min_{\alpha=1,2,3}t_{\varepsilon}^{\alpha}$ from modifying the internal energy:

For $\alpha=1, 2, 3$:
\begin{itemize}
\item if $e(\hat{W}_j^{\alpha}) < \varepsilon$, solve the following quadratic equations for $t_{\varepsilon}^{\alpha}$ as in \cite{zhang2010positivity}
    \begin{equation}
    e[(1-t_{\varepsilon}^{\alpha})\bar{U}_j^n+t_{\varepsilon}^{\alpha}\hat{W}_j^{\alpha}]=\varepsilon
    \end{equation}
\item If $e(\hat{W}_j^{\alpha})\geq \varepsilon$, let
$t_{\varepsilon}^{\alpha}=1$.
\end{itemize}

Denote
\begin{eqnarray*}
\tilde{U}_{j+\frac{1}{2}}^{-}=\theta_2(\hat{U}_{j+\frac{1}{2}}^{-}-\bar{U}_j^n)+\bar{U}_j^n,\quad
\tilde{U}_{j-\frac{1}{2}}^{+}=\theta_2(\hat{U}_{j-\frac{1}{2}}^{+}-\bar{U}_j^n)+\bar{U}_j^n.
\end{eqnarray*}
\item The scheme (\ref{1DEulerFV-c}) with the positivity-preserving limiter would be
\begin{equation}\label{1DEuler-FVCW}
\bar{U}^{n+1}_j=\bar{U}^{n}_j-\lambda(\hat{F}(\tilde{U}^{-}_{j+\frac{1}{2}},\tilde{U}^{+}_{j+\frac{1}{2}})-
\hat{F}(\tilde{U}^{-}_{j-\frac{1}{2}},\tilde{U}^{+}_{j-\frac{1}{2}}).
\end{equation}
\end{enumerate}

\begin{remark}
To prove that the limiter will not destroy the high order accuracy of density for
smooth solutions, for a fifth order scheme, we need to show
$\hat{\rho}_j(x)-\rho_j(x)=O(\Delta x^5)$ in (\ref{limiter-rho}).
In the present compact scheme, although $\rho_{j+\frac{1}{2}}^{-}$
and $\rho_{j-\frac{1}{2}}^{+}$ are obtained globally, which are different
from those in \cite{zhang2011maximum,zhang2010positivity}, the constructed
polynomial $\rho_j(x)$ from (\ref{polyQ}) can be seen locally. Thus, the proof of preserving
high order accuracy of density is similar to that
in \cite{zhang2011maximum,zhang2010positivity}. Similar arguments hold for the internal
energy. So the scheme (\ref{1DEuler-FVCW}) is conservative, high order accurate
and positivity preserving.
\end{remark}

\subsection{Temporal discretization}
Strong stability preserving (SSP) high order Runge-Kutta time discretization \cite{gottlieb2009high} will be used to improve the temporal accuracy for the scheme (\ref{1DEuler-FVCW}). The third-order SSP Runge-Kutta method is
\begin{equation}\label{RK3}
\begin{aligned}
&U^{(1)}=U^n+\Delta tL(U^n), & \\
&U^{(2)}=\frac{3}{4}U^n+\frac{1}{4}U^{(1)}+\frac{1}{4}\Delta tL(U^{(1)}), & \\
&U^{n+1}=\frac{1}{3}U^n+\frac{2}{3}U^{(2)}+\frac{2}{3}\Delta
tL(U^{(2)}),
\end{aligned}
\end{equation}
where $L(U)$ is the spatial operator. Similar to
\cite{zhang2010positivity}, for SSP high order time discretizations,
the limiter will be used at each stage on each time step.

\section{Numerical examples}
\label{sec3}
\setcounter{equation}{0}
\setcounter{figure}{0}
\setcounter{table}{0}

In this section, we will investigate the numerical performance of
the present positivity-preserving fifth-order finite volume
compact-WENO (FVCW) scheme. The fifth-order WENO scheme \cite{castro2011high} will be denoted as ``WENO-Z''
and the original fifth order WENO scheme of Jiang and Shu \cite{jiang1995efficient} is denoted as ``WENO-JS''.  We will compare the FVCW scheme to WENO-JS and WENO-Z schemes. For all the numerical tests, the third-order SSP Runge-Kutta method (\ref{RK3}) is used under the CFL condition (\ref{CFL-CD}) unless
otherwise specified. The numerical solutions are computed with $N$ grid nodes and up to time $t$.

\begin{example}\label{Eg:dpertu}
Advection of density perturbation. The initial conditions for
density, velocity and pressure are specified, respectively, as
\begin{equation*}
\rho(x,0) =1+0.2sin(\pi x), \quad u(x,0)=1, \quad p(x,0)=1.
 \end{equation*}
The exact solution of density is $\rho(x,t) =1+0.2sin(\pi (x-t))$.

The computational domain is $[0,2]$ and the boundary condition
is periodic. The $L_1$, $L_2$ and $L_{\infty}$ errors and
orders at $t=2$ for the present finite volume
compact-WENO scheme are shown in Table \ref{Tab:1}. Here the time step
is taken to be $\Delta t=\frac{\omega_1}{\||u|+a\|}h^{5/3}$. We can clearly
observe fifth-order accuracy for this problem.

In this example with smooth exact solutions, we also compare the computational cost between the
FVCW scheme and the WENO-JS scheme. As we know, the FVCW scheme has high resolutions, however, 
a $3\times3$ block tri-diagonal system (\ref{1DEuler-FVC}) needs to be solved at each grid node $x_{j+\frac12}$ and at each stage of each time step. This might be computationally expensive. 
However, we will demonstrate by this example that the FVCW scheme would still be more efficient.
Two kinds of reconstructions for systems are considered. One is based on a characteristic
variable reconstruction, the other is directly reconstructing on the conservative variables. We take relatively coarser grids and choose the time step to satisfy $\lambda \||u|+a\| = 0.16$, so that the spatial error would always dominate.
In Table \ref{Tab:2}, we show the computational cost between the FVCW scheme and the WENO-JS scheme
for the conservative variable reconstruction case. For this case, without characteristic decomposition, only tri-diagonal (not block tri-diagonal) systems need to be solved along each component, less CPU cost would be needed. We can see at a comparable $L_1$ error level, the computational cost for the FVCW scheme is much less than the WENO-JS scheme especially when the error is small, which can also be seen from Fig.\ref{Fig:1} (left), where the comparison of the CPU cost versus the $L_1$ errors is displayed. Similarly in Table \ref{Tab:3} and Fig. \ref{Fig:1} (right) for the characteristic variable case, we can also observe less computational cost for the FVCW scheme when it has comparable error to the WENO-JS scheme. Similar discussions can be found in \cite{ghosh2012compact}. We note that the FVCW scheme with conservative variable reconstruction is more efficient than the characteristic variable reconstruction for smooth solutions. However for discontinuous solutions, the characteristic variable reconstruction would perform better to control
spurious numerical oscillations. In this paper, for the following examples, we will mainly adopt the characteristic variable reconstruction.


\begin{table}
\caption{Numerical errors and orders for Example \ref{Eg:dpertu}.}\label{Tab:1}
\begin{tabular}{lllllll}
\hline \noalign{\smallskip}
    N  &  $L_1$ error &$L_1$ Order&$L_{\infty}$ error& $L_{\infty}$ Order &$L_{2}$ error & $L_{2}$ Order \\
\hline \noalign{\smallskip}
    10 &    7.802E-04 & &    6.506E-04 & &    5.874E-04 & \\
    20 &    1.493E-05 &     5.71 &    1.716E-05 &     5.24 &    1.263E-05 &     5.54 \\
    40 &    3.260E-07 &     5.52 &    2.942E-07 &     5.87 &    2.625E-07 &     5.59 \\
    80 &    9.107E-09 &     5.16 &    9.117E-09 &     5.01 &    7.162E-09 &     5.20 \\
   160 &    2.695E-10 &     5.08 &    2.903E-10 &     4.97 &    2.113E-10 &     5.08 \\
   320 &    8.169E-12 &     5.04 &    9.202E-12 &     4.98 &    6.413E-12 &     5.04 \\
\noalign{\smallskip} \hline
 \end{tabular}
\end{table}

\begin{table}
\caption{Numerical errors and computational cost for WENO-JS and
FVCW schemes for Example \ref{Eg:dpertu}. Conservative variable reconstruction.}\label{Tab:2}
\begin{center}
\begin{tabular}{lllll|lllll}\hline
       \multicolumn{5}{c|}{FVCW} &\multicolumn{5}{c}{WENO-JS}\\\hline
   N  &  $L_1$ error &$L_{\infty}$ error& $L_{2}$ error & CPU cost ($s$) &
   N  &  $L_1$ error &$L_{\infty}$ error& $L_{2}$ error & CPU cost ($s$)            \\\hline
   7  &     3.780E-03 & 2.796E-03          &   2.939E-03      &1.56E-002 &
   15 &    2.236E-03 & 1.899E-03          &   1.792E-03       & 3.13E-02     \\
   14 &    7.819E-05 & 8.125E-05          &    6.366E-05      & 3.12E-02  &
   30 &    7.510E-05 & 7.187E-05          &    6.295E-05      & 0.11           \\
   28 &    2.065E-06 & 1.537E-06          &    1.579E-06      & 0.14         &
   60 &    2.352E-06 & 2.353E-06          &    1.919E-06      & 0.47          \\
   56 &    5.879E-08 & 4.699E-08          &    4.511E-08      & 0.58         &
   120 &  7.336E-08 & 7.082E-08          &    5.878E-08      & 1.88          \\
   112 &  1.945E-09 & 1.482E-09          &    1.506E-09      & 2.22         &
   240 &  2.280E-09 & 2.022E-09          &    1.824E-09      & 7.55          \\
   224 &  8.882E-11 & 6.897E-11          &    6.926E-11      & 8.86         &
   480 &  6.977E-11 & 5.909E-11          &    5.541E-11      & 29.95         \\\hline
 \end{tabular}
 \end{center}
\end{table}

\begin{table}
\caption{Numerical errors and computational cost for WENO-JS and
FVCW schemes for Example \ref{Eg:dpertu}. Characteristic variable reconstruction.}\label{Tab:3}
\begin{center}
\begin{tabular}{lllll|lllll}\hline
       \multicolumn{5}{c|}{FVCW} &\multicolumn{5}{c}{WENO-JS}\\\hline
   N  &  $L_1$ error &$L_{\infty}$ error& $L_{2}$ error & CPU cost ($s$) &
   N  &  $L_1$ error &$L_{\infty}$ error& $L_{2}$ error & CPU cost ($s$)             \\\hline
   7   &    3.780E-03   &   2.796E-03         &  2.939E-03      & 3.13E-02    &
   15 &    2.236E-03   &    1.899E-03         &  1.792E-03      & 4.69E-02     \\
   14 &    7.819E-05   &    8.125E-05         &  6.366E-05      & 0.11            &
   30 &    7.509E-05   &    7.183E-05         &  6.293E-05      & 0.12            \\
   28 &    2.065E-06   &    1.537E-06         &  1.579E-06      & 0.47            &
   60 &    2.351E-06   &    2.346E-06         &  1.917E-06      & 0.69            \\
   56 &    5.879E-08   &    4.699E-08         &  4.511E-08      & 1.86            &
   120 &   7.318E-08  &    6.969E-08         &  5.859E-08      & 2.72            \\
   112 &   1.945E-09  &    1.482E-09         &  1.506E-09      & 7.34            &
   240 &   2.259E-09  &    1.943E-09         &  1.802E-09      & 10.84          \\
   224 &   8.882E-11  &    6.898E-11         &  6.926E-11      & 29.22          &
   480 &   6.800E-11  &    5.616E-11         & 5.377E-11       & 43.27          \\\hline
 \end{tabular}
 \end{center}
\end{table}

\begin{figure}[htp]
\subfigure[Conservative variable reconstruction]{\includegraphics[width=0.5\textwidth]{./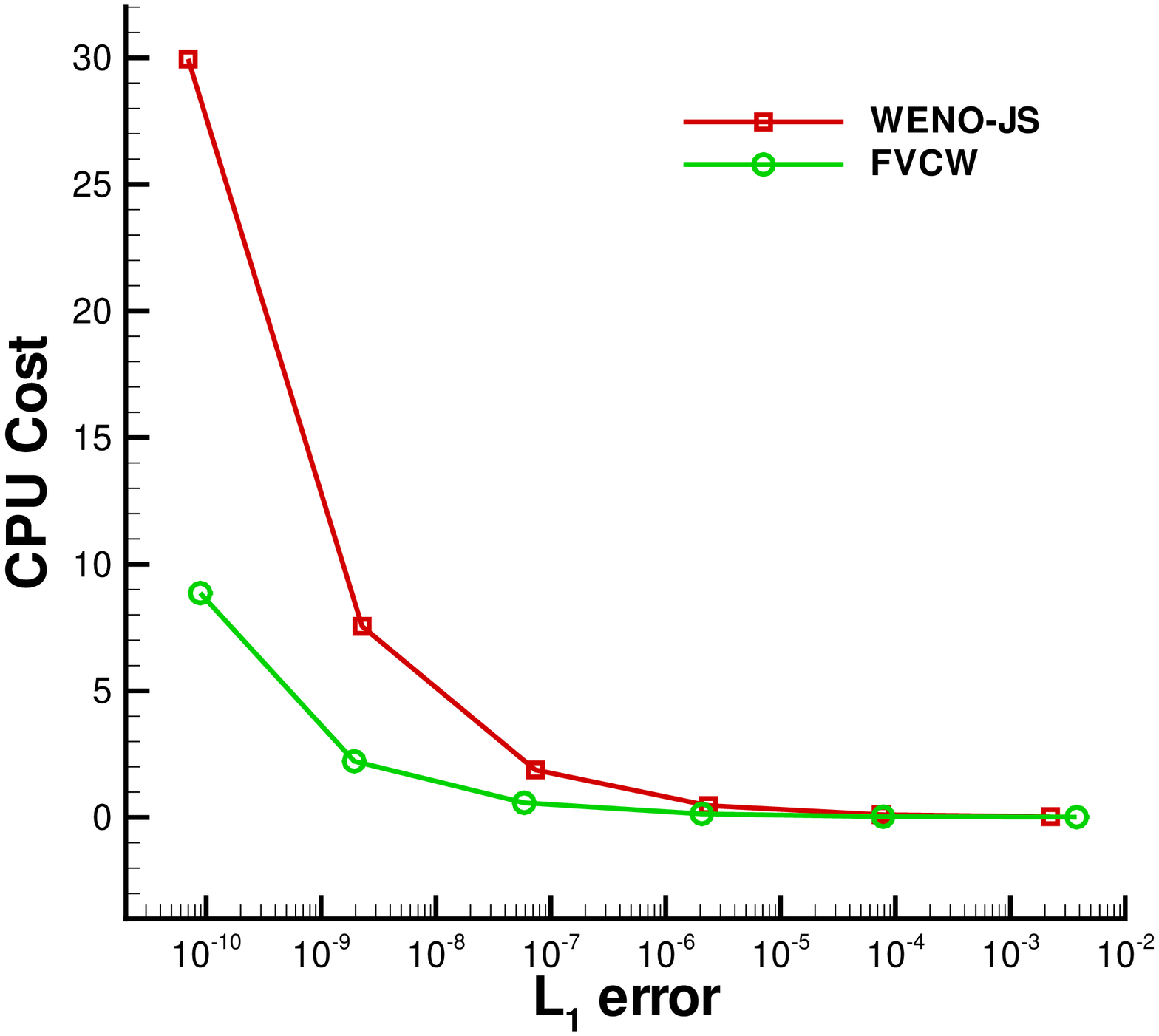}}
\subfigure[Characteristic variable reconstruction]{\includegraphics[width=0.5\textwidth]{./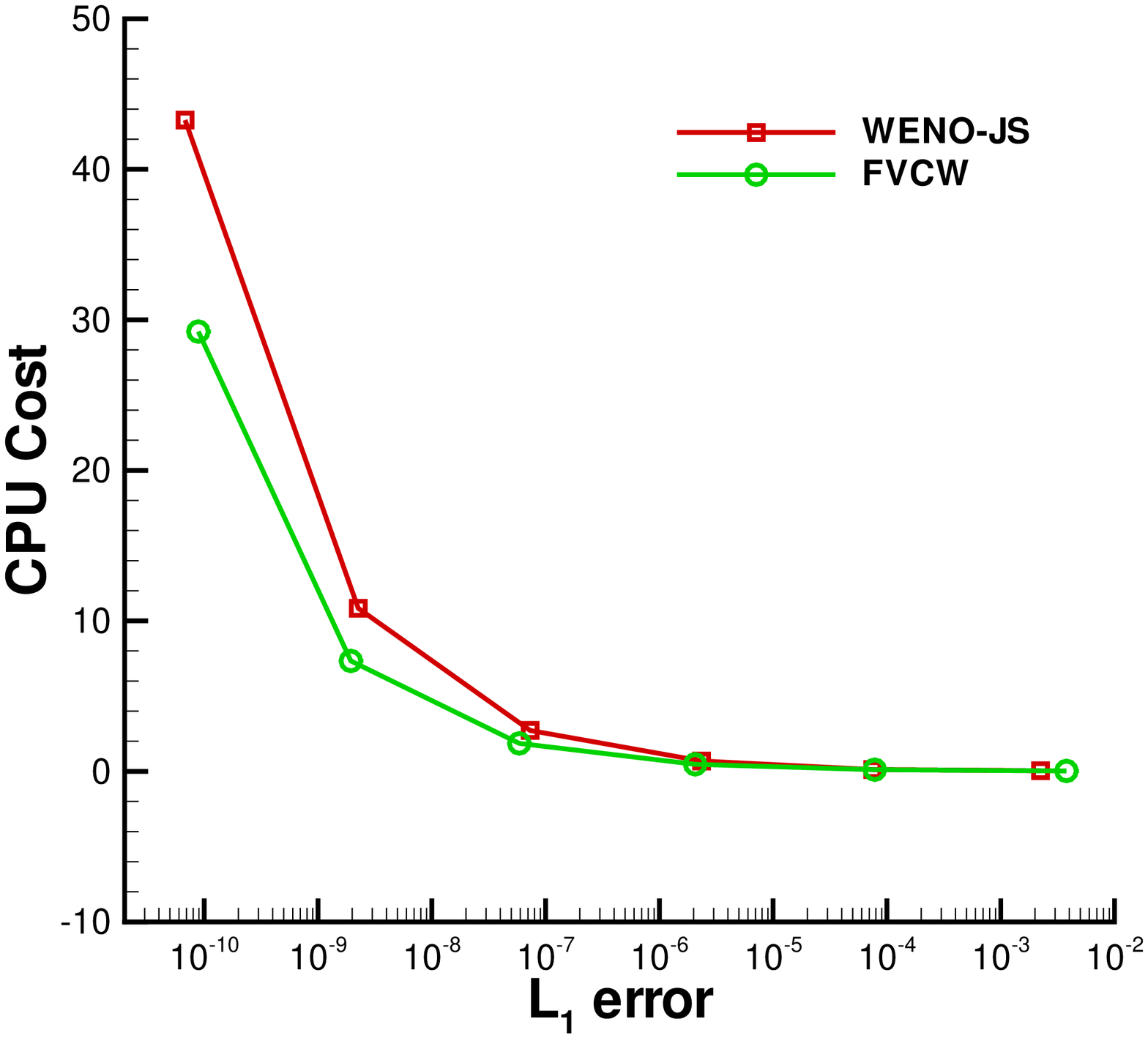}}
\caption{Comparison of CPU cost versus $L_1$ errors for the WENO-JS and FVCW schemes.
Left: conservative variable reconstruction in Table \ref{Tab:2}; Right: characteristic variable reconstruction in Table \ref{Tab:3}.}
\label{Fig:1}
\end{figure}
\end{example}

\begin{example}\label{Eg:Lax}
This example is the one-dimensional Lax shock tube problem
\cite{lax1954weak} with the following Riemann initial conditions
\begin{equation}
(\rho,u,p)=\left\{\begin{array}{ll}
(0.445, 0.698, 3.528), & \textrm{$ -5 \leq x < 0$},\\
(0.5, 0, 0.571), & \textrm{$ 0 \leq x < 5$},
\end{array}\right.
\label{lax}
\end{equation}
and the final time is $t=1.4$.

The exact solutions of a Lax problem contain a strong shock, 
a contact discontinuity and a rarefaction wave. We
compute the solutions on the domain $[-5,5]$ with Neumann boundary
conditions. The density and pressure on a grid
of $200$ points for the WENO-JS, WENO-Z and the present FVCW schemes are shown in Fig. \ref{Fig:2}. For this
test problem, we observe the FVCW scheme is sharper than the
WENO-JS and WENO-Z schemes, as it is less dissipative.

For this problem with discontinuous solutions, we also compare the WENO-JS
scheme with the FVCW scheme at different grid nodes in Fig. \ref{Fig:21}.
As we can see, the FVCW scheme with $N=60$ and CPU cost $0.57s$ can match the result of the WENO-JS
scheme with $N=100$ and CPU cost $0.55s$, both are better than the WENO-JS scheme with $N=60$.
It shows the compact scheme has better resolutions than the non-compact scheme.
At the same resolution, the compact scheme can take much coarser grids while with comparable
computational cost as the non-compact scheme.

\begin{figure}[htp]
\subfigure[Density]{\includegraphics[width=0.5\textwidth]{./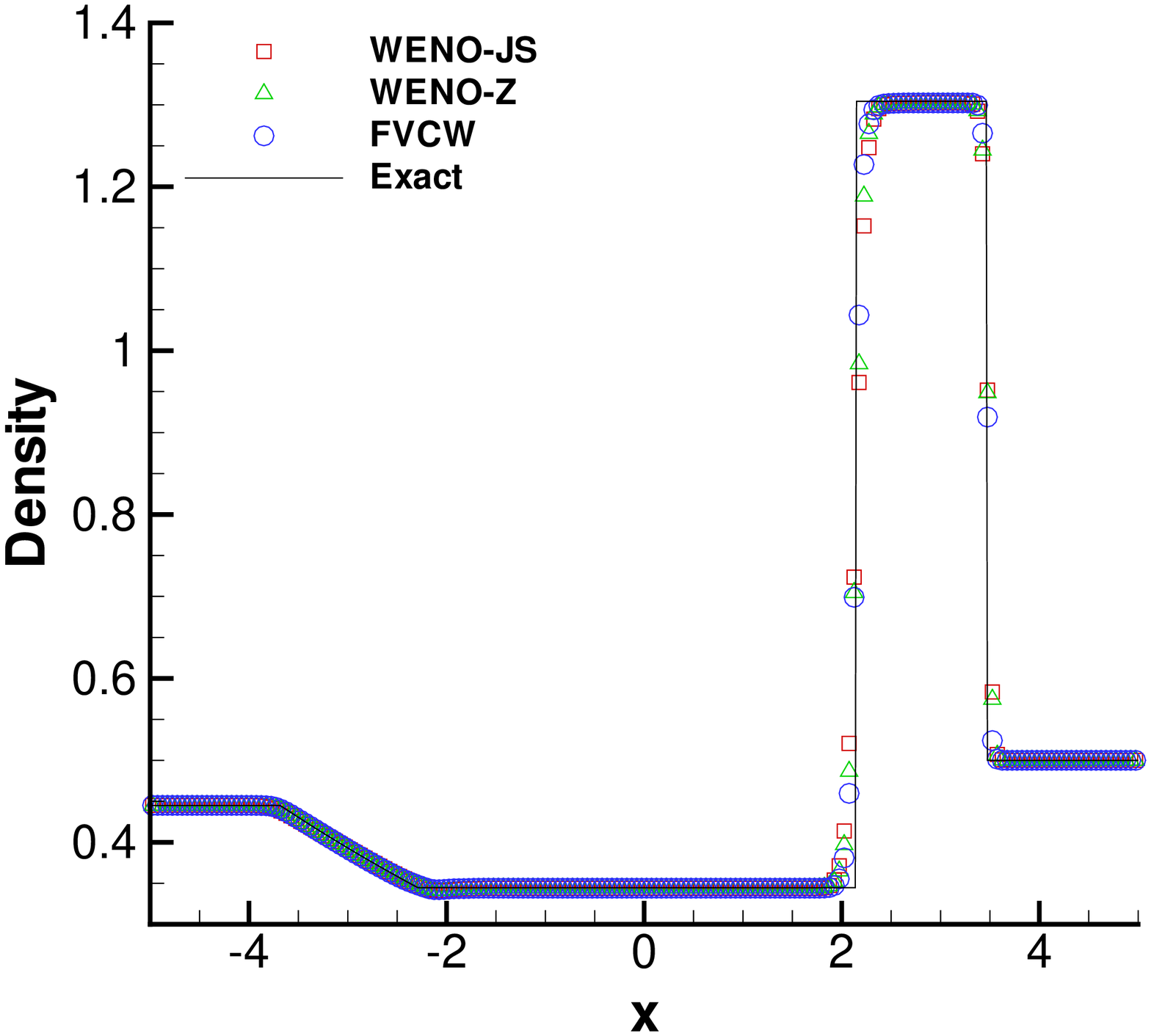}}
\subfigure[Pressure]{\includegraphics[width=0.5\textwidth]{./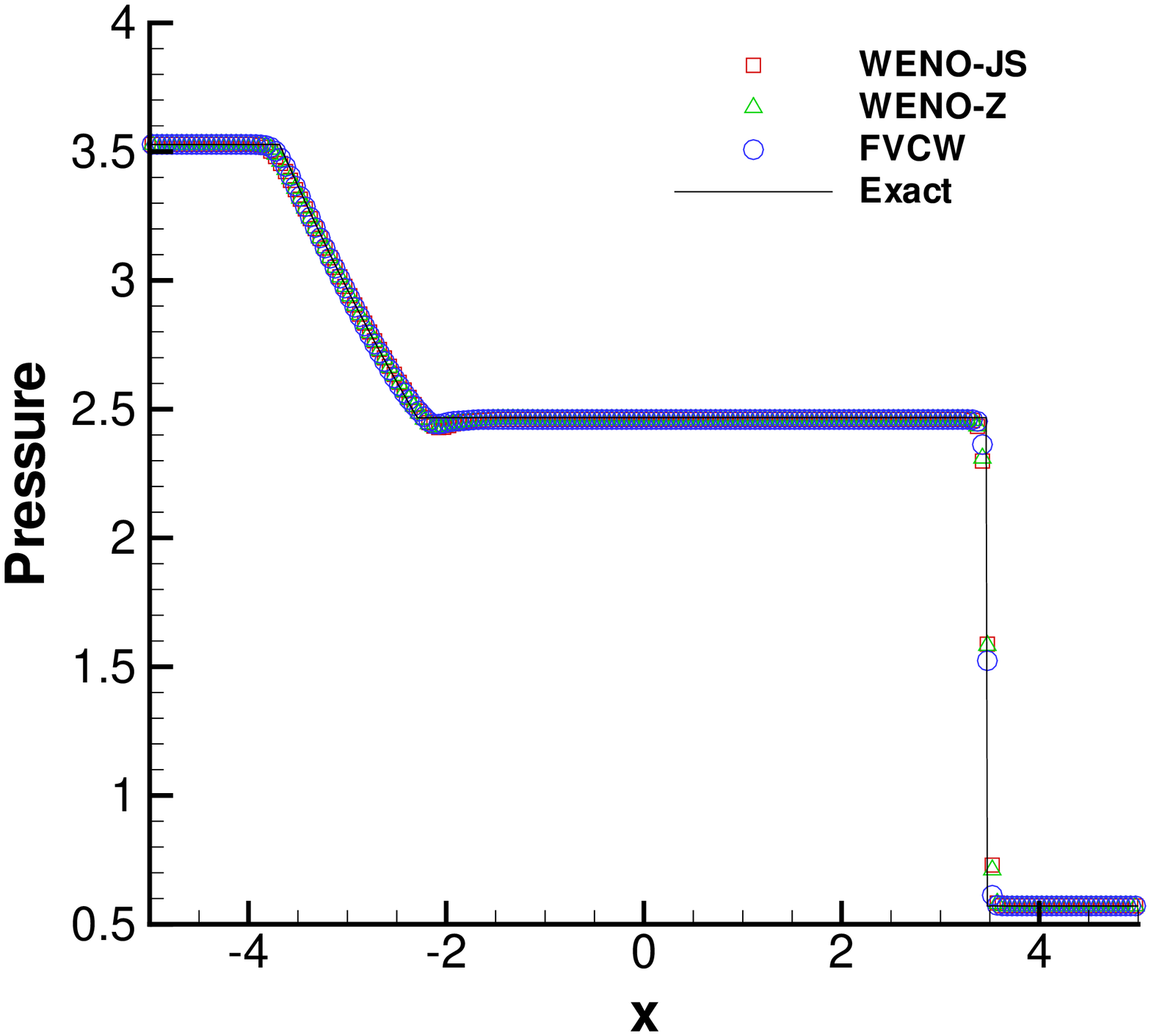}}
\subfigure[Zoom-in of (a) near
shock]{\includegraphics[width=0.5\textwidth]{./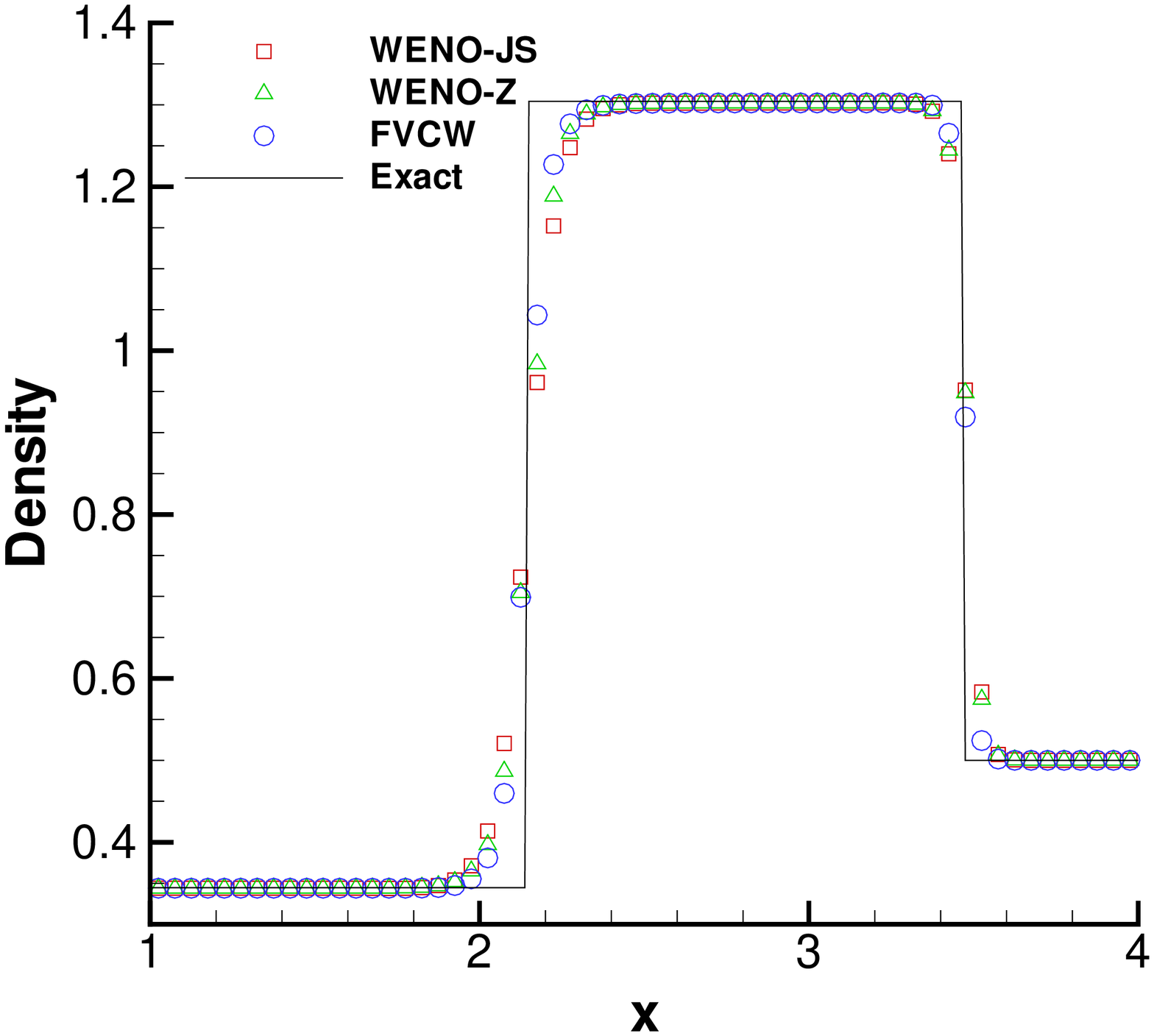}}
\subfigure[Zoom-in of (b) near
shock]{\includegraphics[width=0.5\textwidth]{./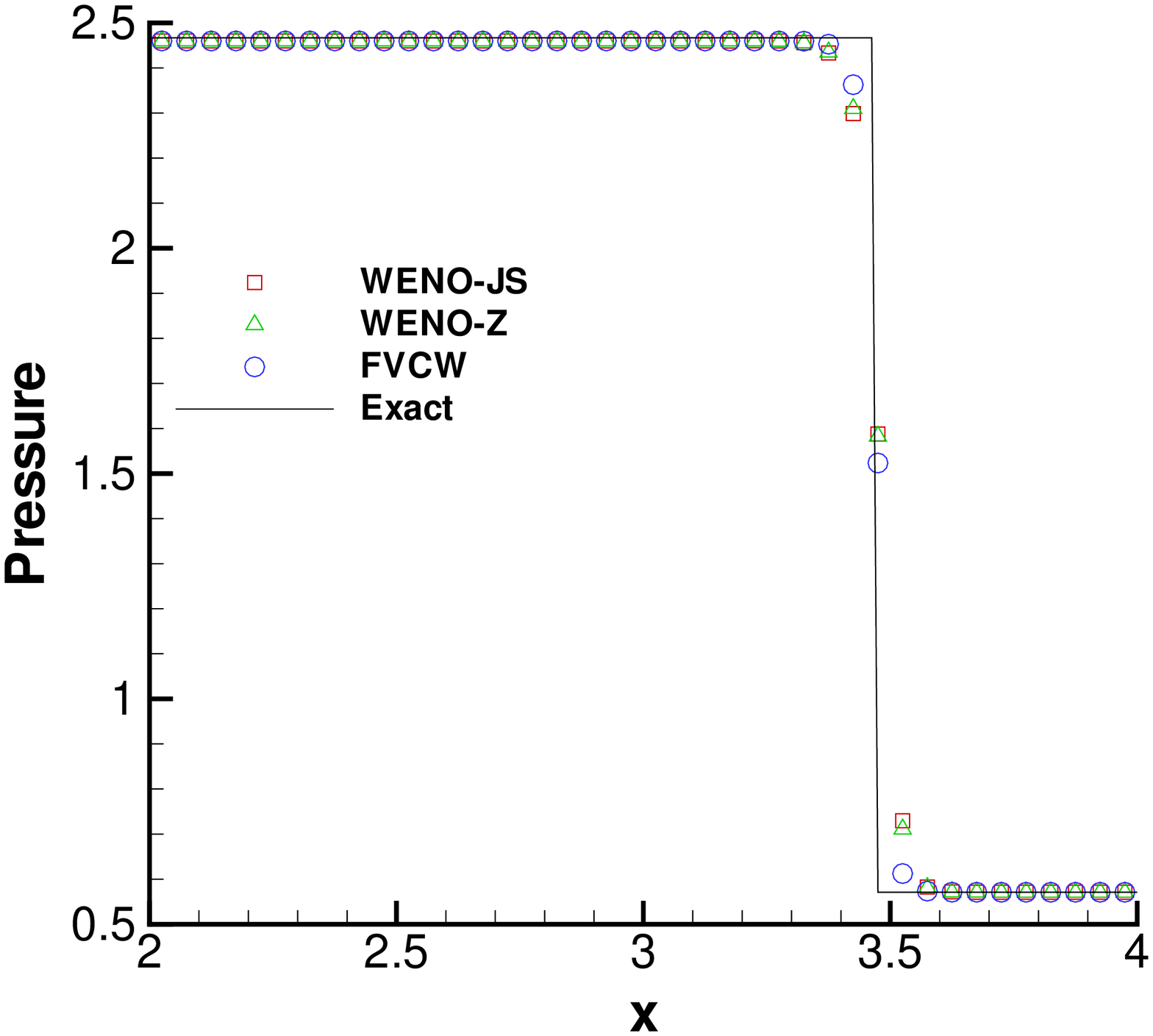}}
\subfigure[Zoom-in of (a) near rarefaction
wave]{\includegraphics[width=0.5\textwidth]{./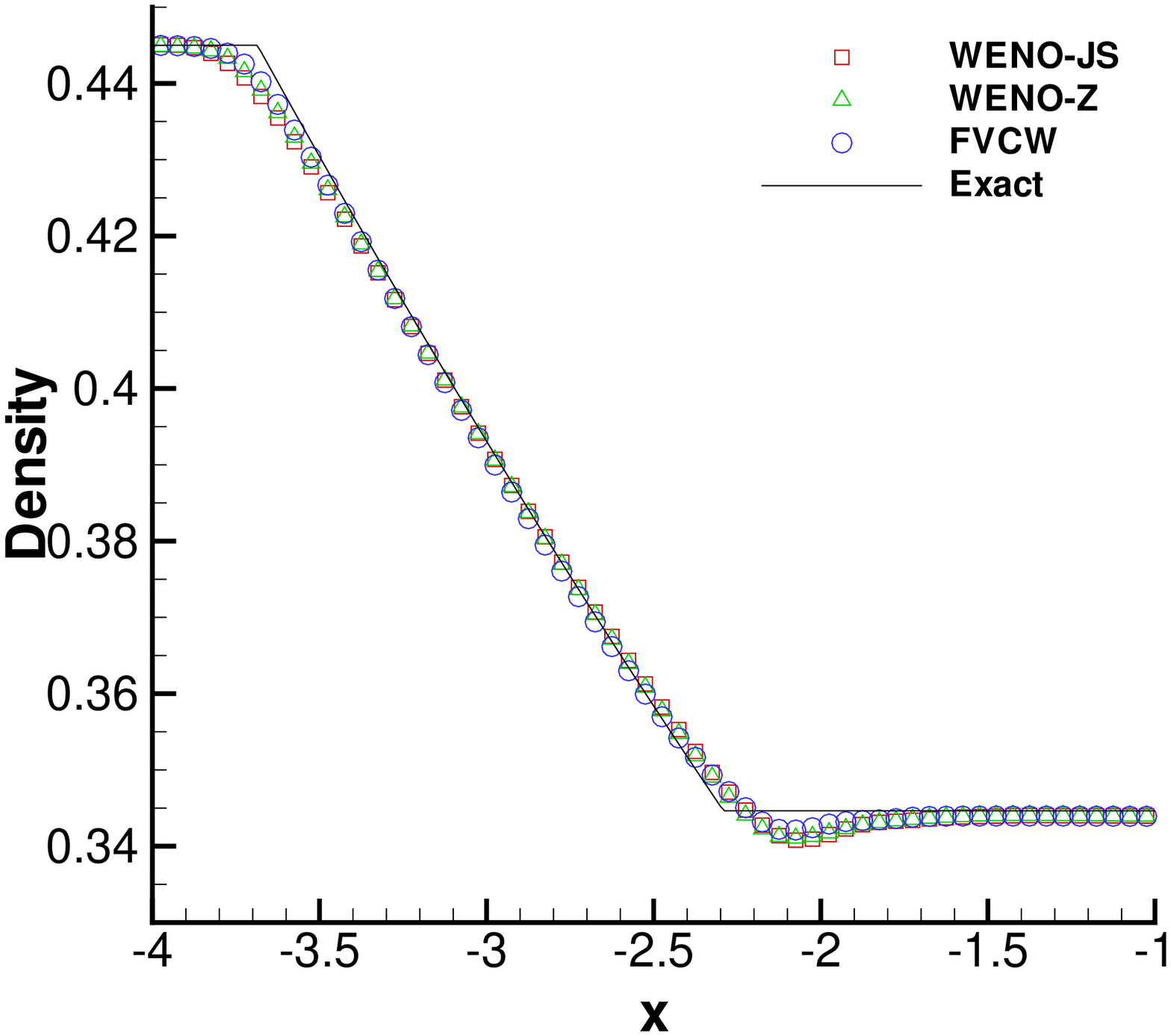}}
\subfigure[Zoom-in of (b) near rarefaction
wave]{\includegraphics[width=0.5\textwidth]{./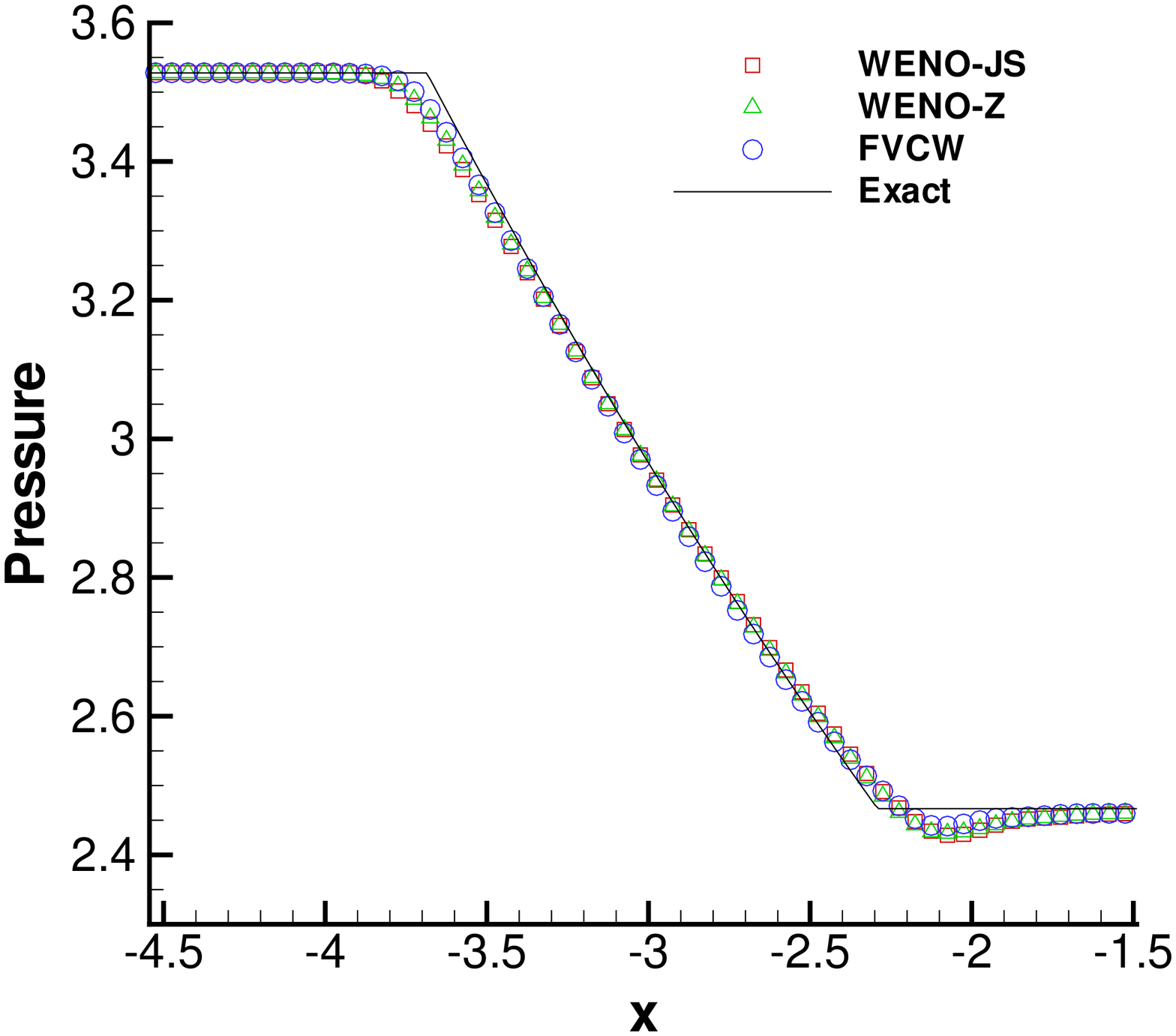}}
\caption{The density (left) and pressure (right) profiles of the Lax
problem (\ref{lax}) at $t=1.4$.}\label{Fig:2}
\end{figure}

\begin{figure}[htp]
\subfigure[Density]{\includegraphics[width=0.5\textwidth]{./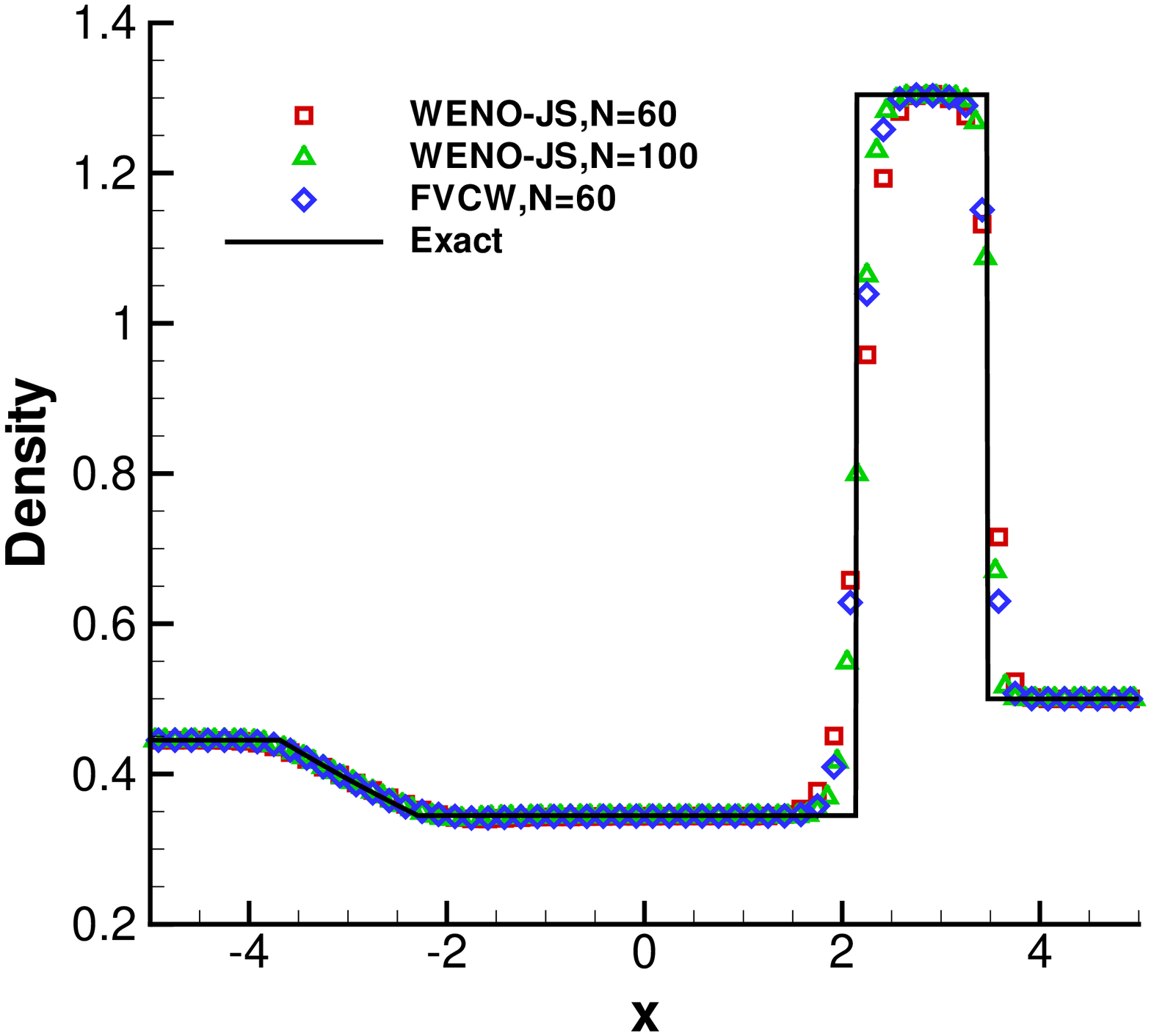}}
\subfigure[Zoom-in of (a) near
shock]{\includegraphics[width=0.5\textwidth]{./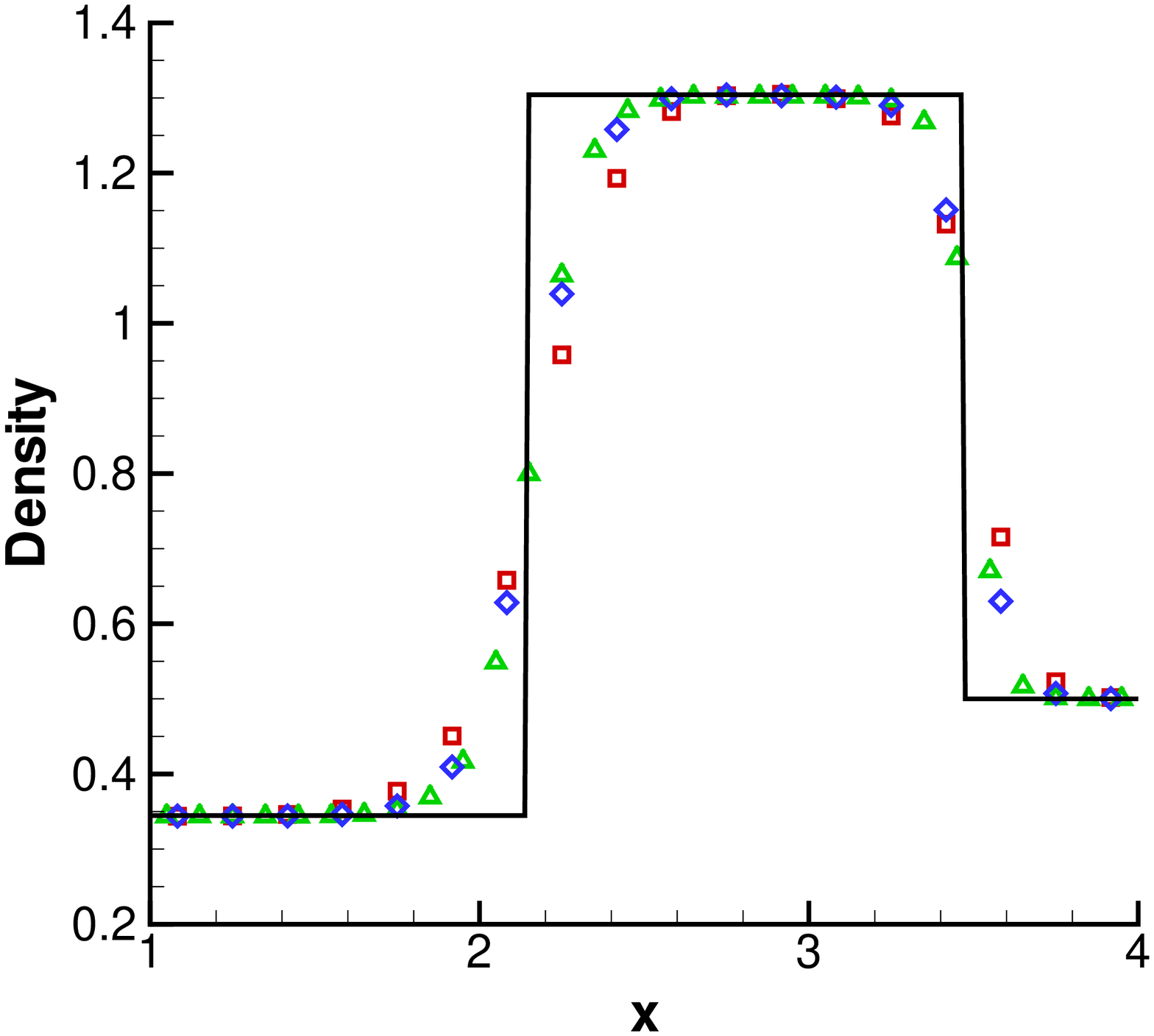}}
\caption{The comparison of density for the Lax
problem (\ref{lax}) with the WENO-JS scheme and the FVCW scheme at $t=1.4$.}\label{Fig:21}
\end{figure}
\end{example}

\begin{example}\label{Eg:Sod}
This example is the one-dimensional Sod shock tube problem 
\cite{sod1978survey} with the following Riemann initial conditions
\begin{equation}
(\rho,u,p)=\left\{\begin{array}{ll}
(0.125, 0, 1), & \textrm{$ -5 \leq x < 0$},\\
(1, 0, 1), & \textrm{$ 0 \leq x < 5$},
\end{array}\right.
\label{sod}
\end{equation}
and the final time is $t=2.0$.

The exact solution contains a left-running rarefaction wave and
a right-running contact discontinuity and a shock wave. The spatial
domain $[-5, 5]$ is discretized with $100$ grid points and the results are shown
in Fig.\ref{Fig:3}. We compare our numerical results with those
obtained by WENO-JS and WENO-Z schemes. The present scheme can
capture the shock front and the contact discontinuity with correct
locations and satisfactory sharpness. From Fig.\ref{Fig:3}(b,c), we
can observe that the numerical results obtained by the present FVCW
scheme shows significant lower smearing across the
discontinuities.

\begin{figure}
\begin{center}
\subfigure[Density]{\includegraphics[width=0.5\textwidth]{./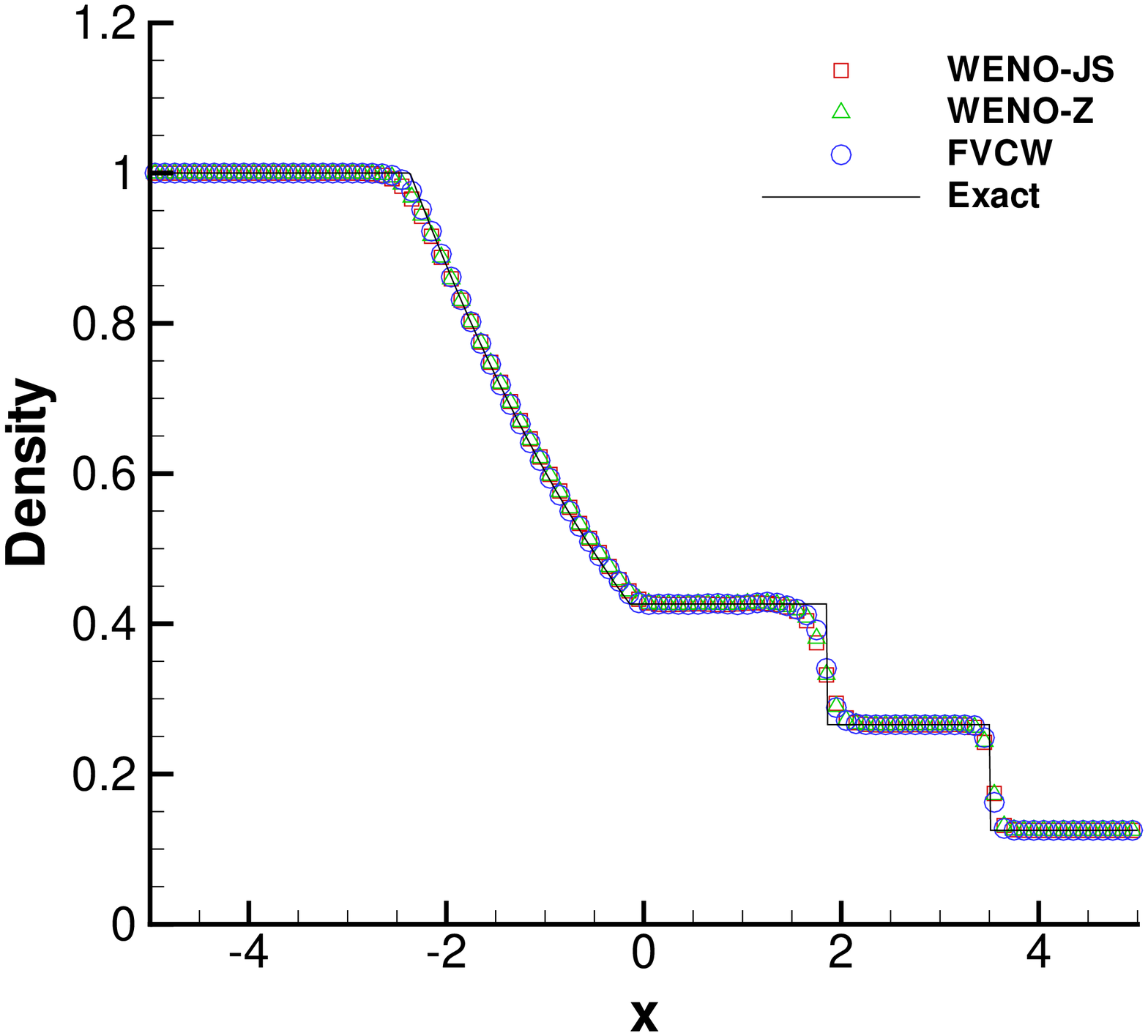}}
\end{center}
 \subfigure[Zoom-in of (a) near
shock]{\includegraphics[width=0.5\textwidth]{./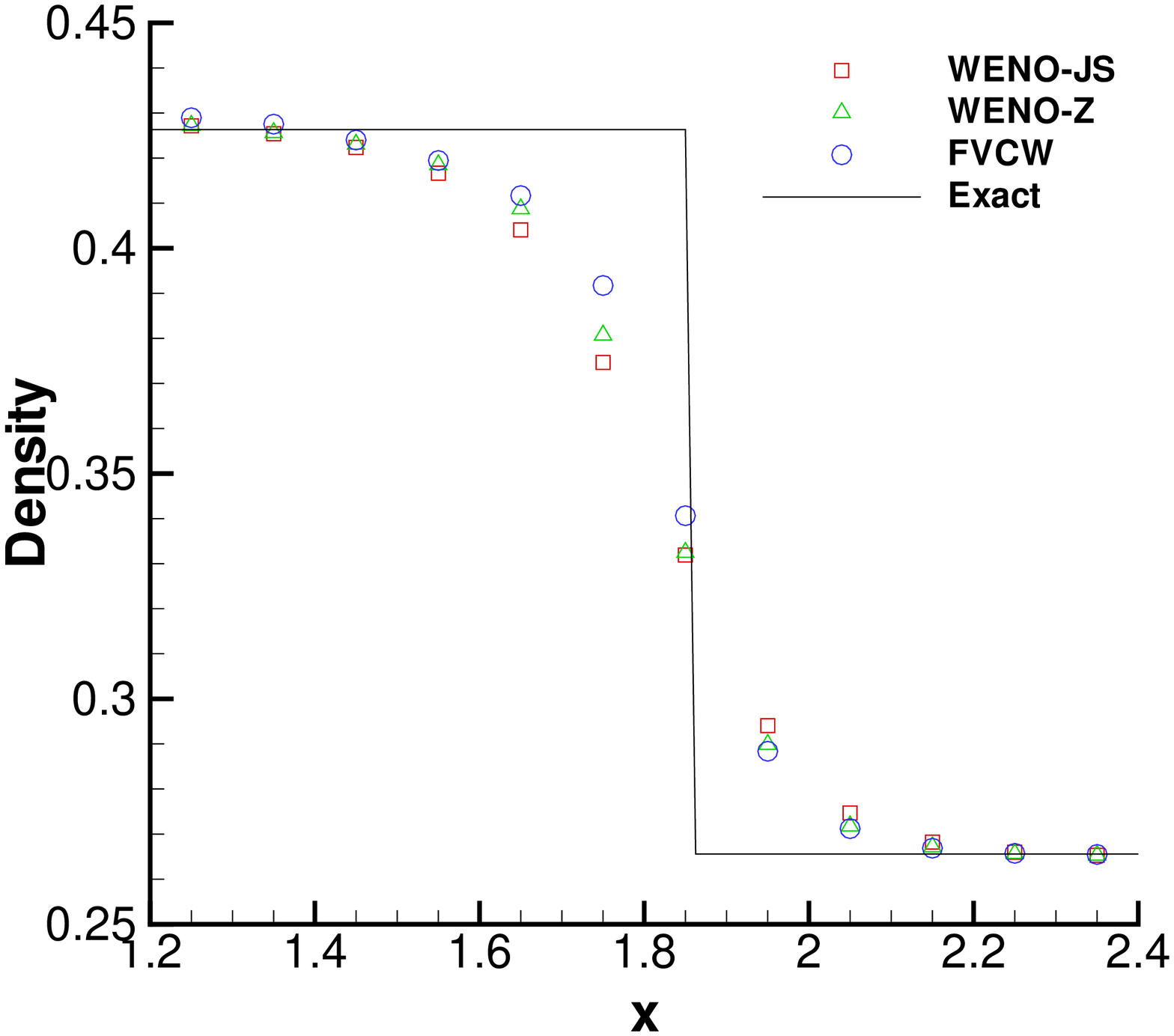}}
\subfigure[Zoom-in of (a) near rarefaction
wave]{\includegraphics[width=0.5\textwidth]{./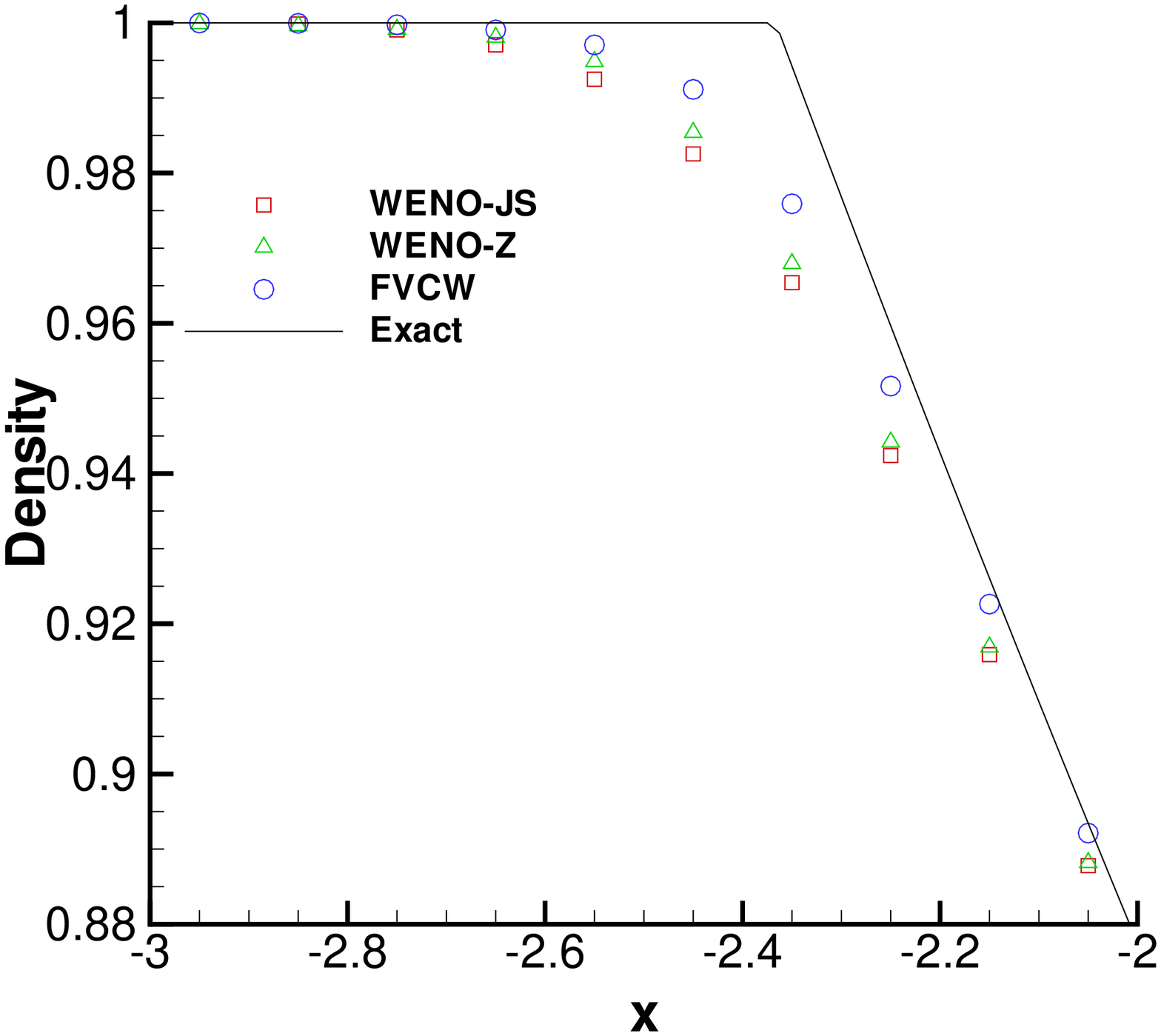}}
\caption{The density profiles of the Sod problem (\ref{sod}) at $t=2.0$.}
\label{Fig:3}
\end{figure}

\end{example}

\begin{example}\label{Eg:Sti}
In this example, the one dimensional Mach 3 shock-turbulence wave
interaction problem \cite{shu1989efficient} is tested with the following
initial conditions
\begin{equation}
(\rho,u,p)=\left\{\begin{array}{ll}
(3.857143, 2.629369, 10.33333), & \textrm{$ -5 \leq x < -4$},\\
(1+0.2\sin{5x}, 0, 1), & \textrm{$ -4 \leq x < 5$},
\end{array}\right.
\label{OS}
\end{equation}
and the final time is $t=1.8$. The solution of this problem consists
of the interaction of a stationary shock and fine scale structures
which are located behind a right-going main shock. As the density
perturbation passes through the shock, it produces perturbations
developing into the shock with smaller amplitude. Fig.\ref{Fig:4} shows
the density on a grid of $200$ points for the WENO-JS, WENO-Z and
FVCW schemes. The ``exact solution'' is a reference solution computed
by the WENO-JS scheme with $3200$ grid points. It is observed that the
present finite volume compact scheme captures the fine scale
structures of the solution at the high-frequency waves behind the
shock better than WENO-JS and WENO-Z, while also maintaining non-oscillatory
behavior across the shock wave. The numerical solution is greatly
improved with $N=400$ and the numerical results are shown in Fig.
\ref{Fig:5}.

\begin{figure}
\subfigure[Density:
$N=200$]{\includegraphics[width=0.5\textwidth]{./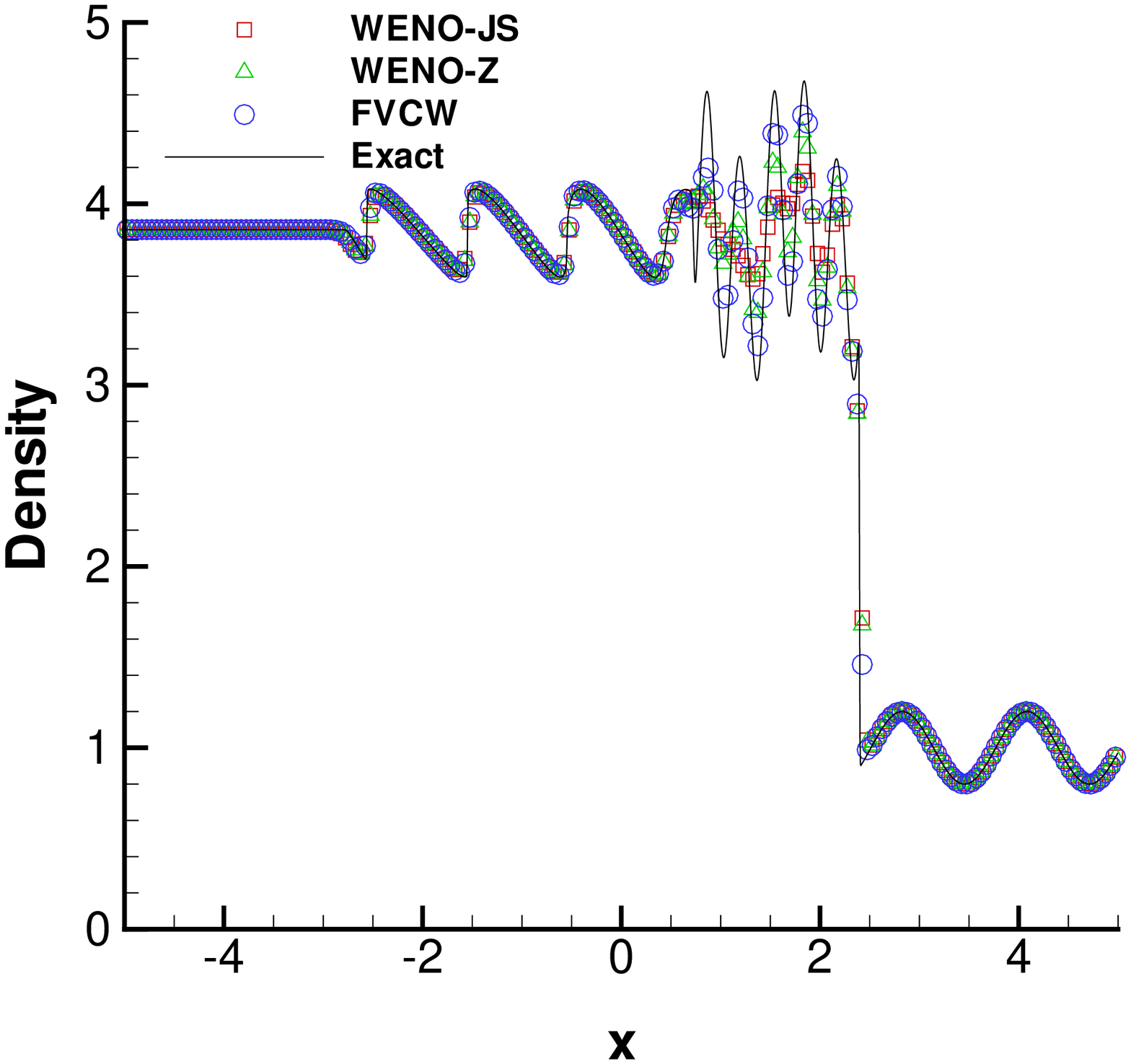}}
\subfigure[Zoom-in of (a) near shock-turbulence
wave]{\includegraphics[width=0.5\textwidth]{./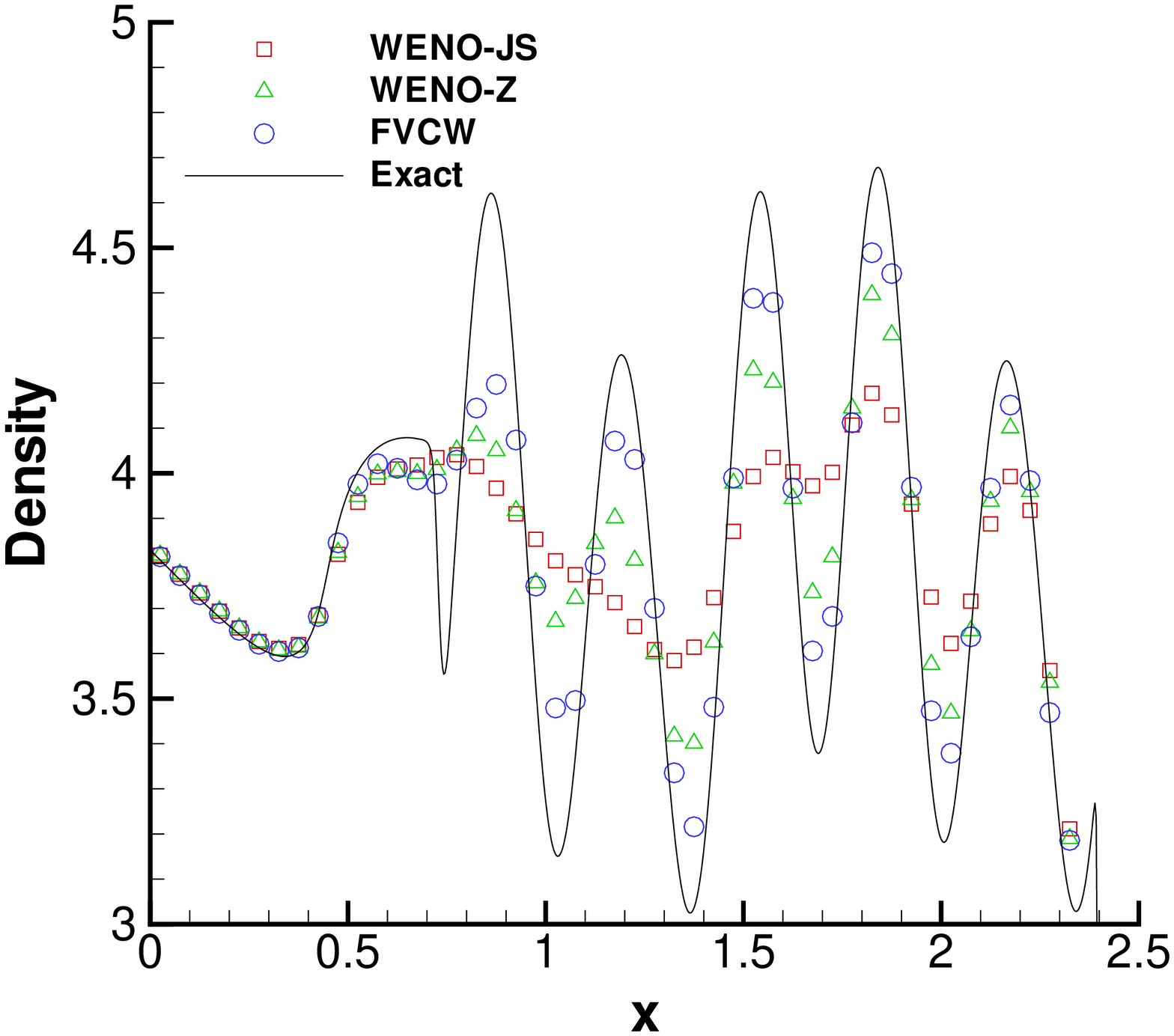}}
\caption{Shock-turbulence interaction (\ref{OS}) with $N=200$ at $t=1.8$.}
\label{Fig:4}
\end{figure}

\begin{figure}
\subfigure[Density:
$N=400$]{\includegraphics[width=0.5\textwidth]{./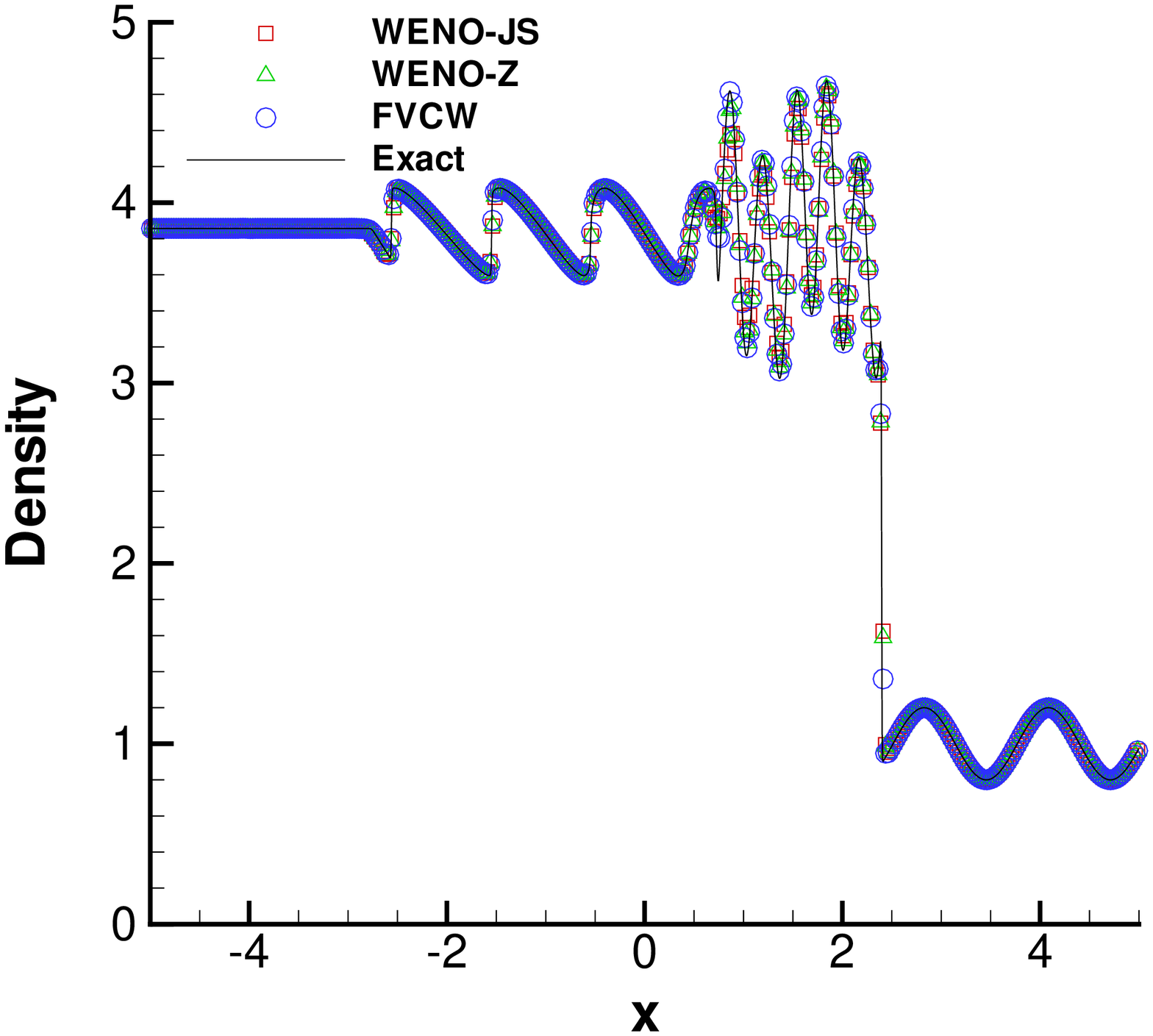}}
\subfigure[Zoom-in of (a) near shock-turbulence
wave.]{\includegraphics[width=0.5\textwidth]{./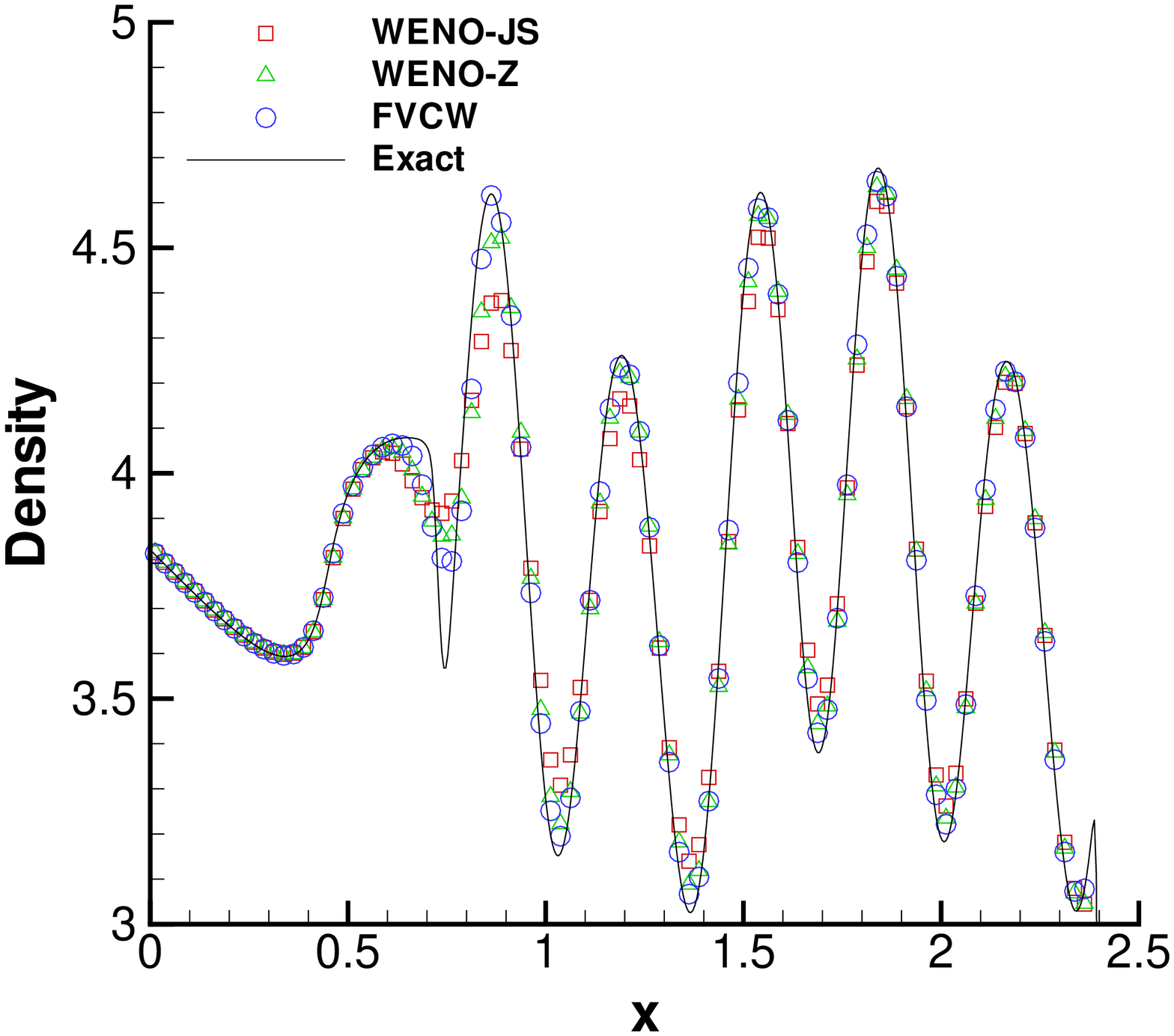}}
\caption{Shock-turbulence interaction (\ref{OS}) with $N=400$ at $t=1.8$.}
\label{Fig:5}
\end{figure}
\end{example}

\begin{example}\label{Eg:bwi}
The one dimensional blastwave interaction problem of Woodward and
Collela \cite{woodward1984numerical} has the following initial
conditions
\begin{equation}
(\rho,u,p)=\left\{\begin{array}{ll}
(1,0,1000), & \textrm{$ 0 \leq x < 0.1$},\\
(1,0,0.01), & \textrm{$ 0.1 \leq x < 0.9$},\\
(1,0,100), & \textrm{$ 0.9 \leq x \leq 1.0$},
\end{array}\right.
\label{blast}
\end{equation}
and reflective boundary conditions. The final time is $t=0.038$.
The initial pressure gradients generate two density shock waves which
collide and interact at later time. The solution of this problem
contains rarefactions, interaction of shock waves and the collision
of strong shock waves. The ``exact solution'' of this test problem is
a reference solution computed by the WENO-JS scheme with $3200$ grid points.
The density obtained with WENO-JS, WENO-Z and the present FVCW
schemes at $t=0.038$ with $200$ cells are shown in Fig. \ref{Fig:6}.
The zoomed regions of the density profile Fig. \ref{Fig:6} (b) show
that the present FVCW scheme gives better resolution than the other two
schemes. The numerical solution is also greatly improved with $N=400$
and the numerical results are shown in Fig. \ref{Fig:7}.

\begin{figure}
\subfigure[Density:
$N=200$]{\includegraphics[width=0.5\textwidth]{./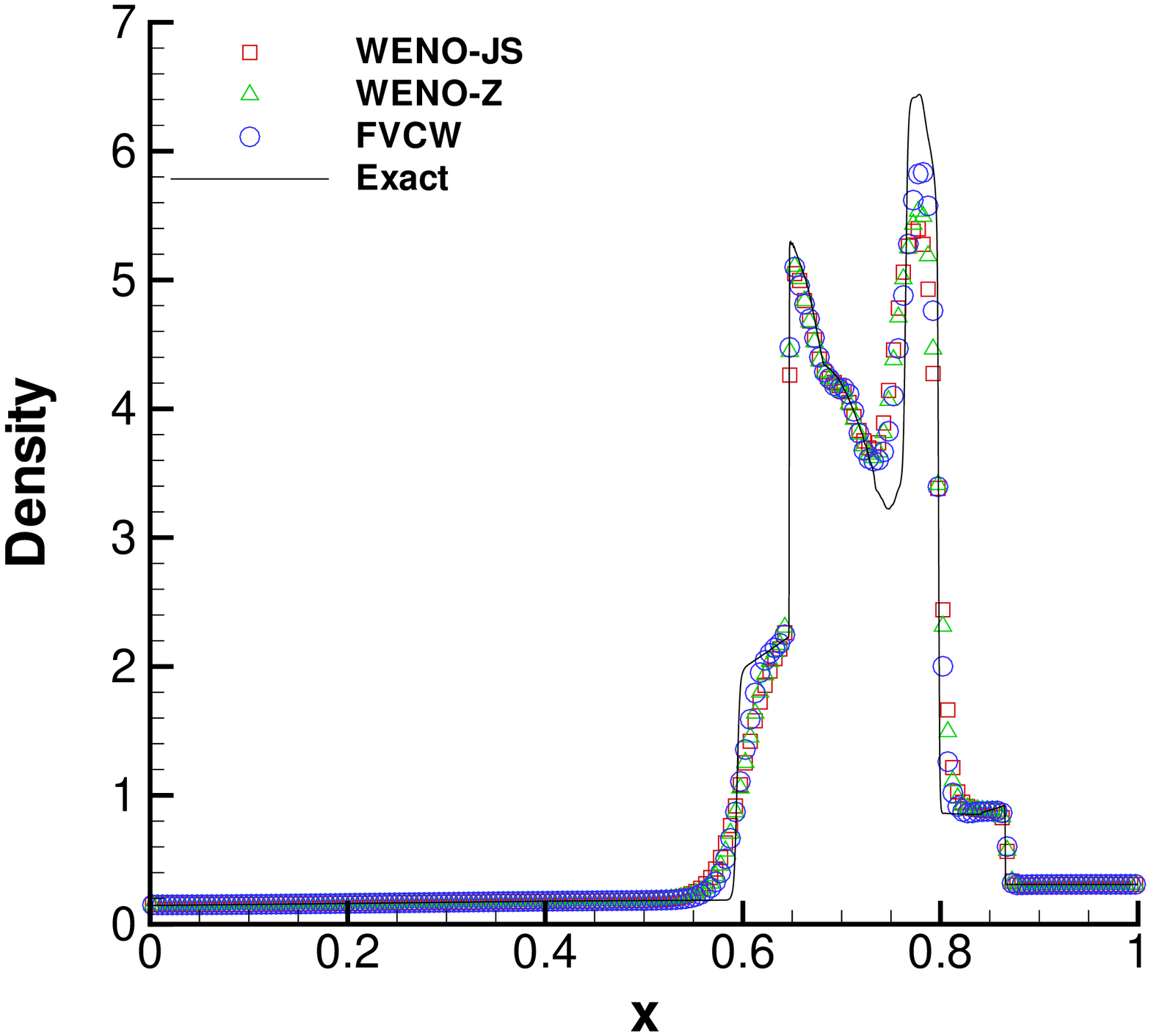}}
\subfigure[Zoom-in of (a) at the complex smooth
region.]{\includegraphics[width=0.5\textwidth]{./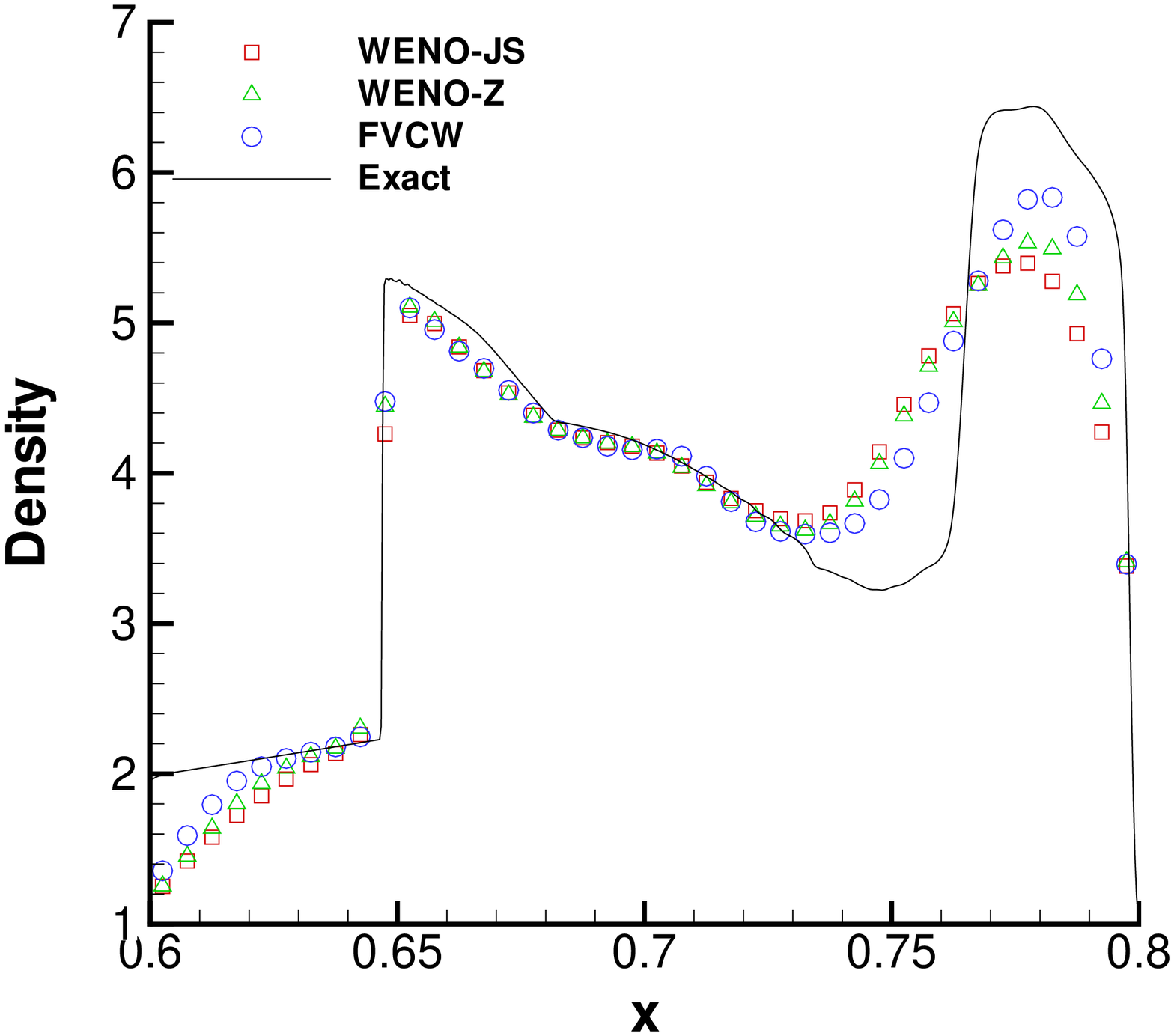}}
\caption{Blastwave interaction problem (\ref{blast}) with $N=200$ at $t=0.038$.}
\label{Fig:6}
\end{figure}
\begin{figure}
\subfigure[Density:
$N=400$]{\includegraphics[width=0.5\textwidth]{./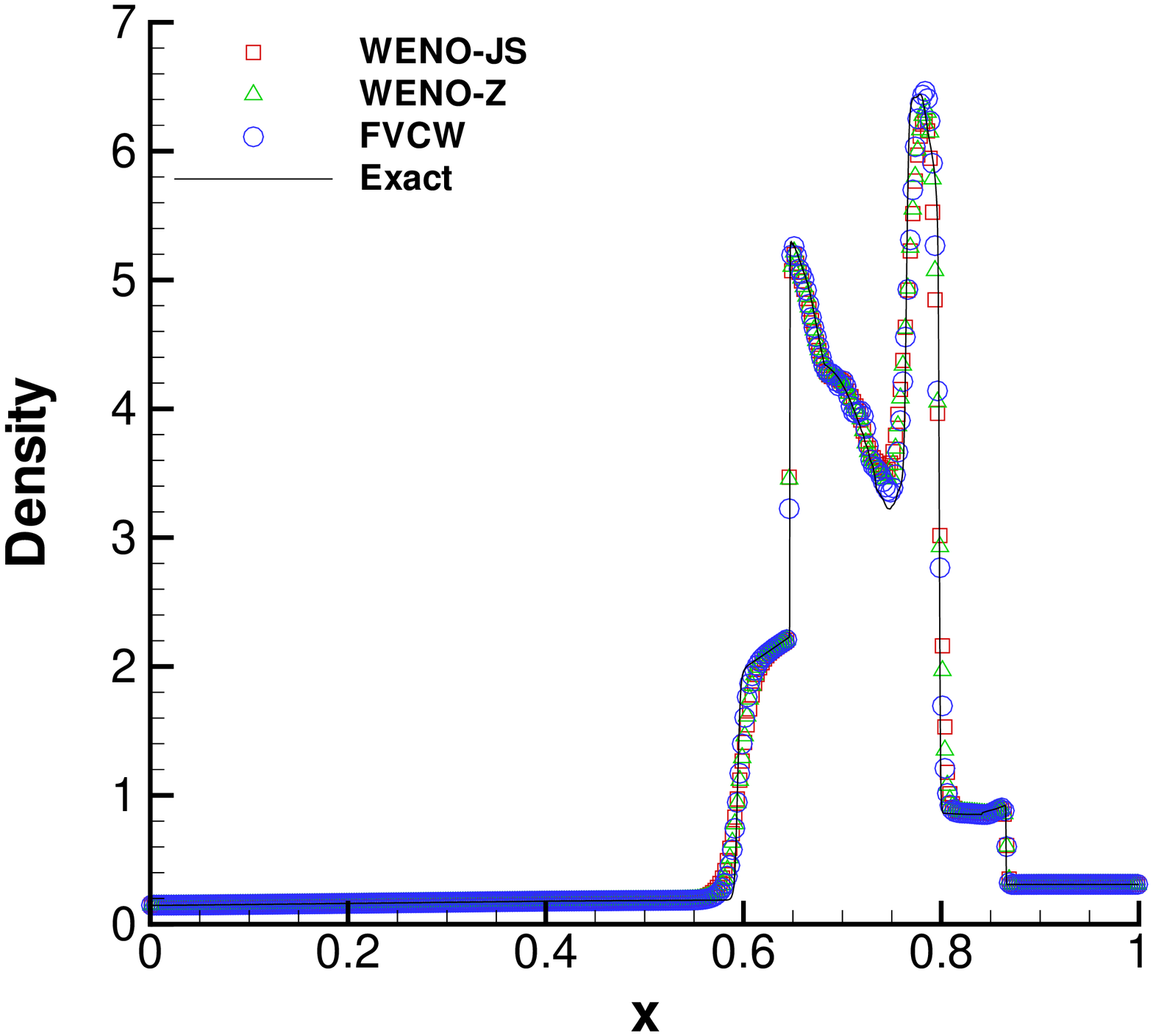}}
\subfigure[Zoom-in of (a) at the complex smooth
region.]{\includegraphics[width=0.5\textwidth]{./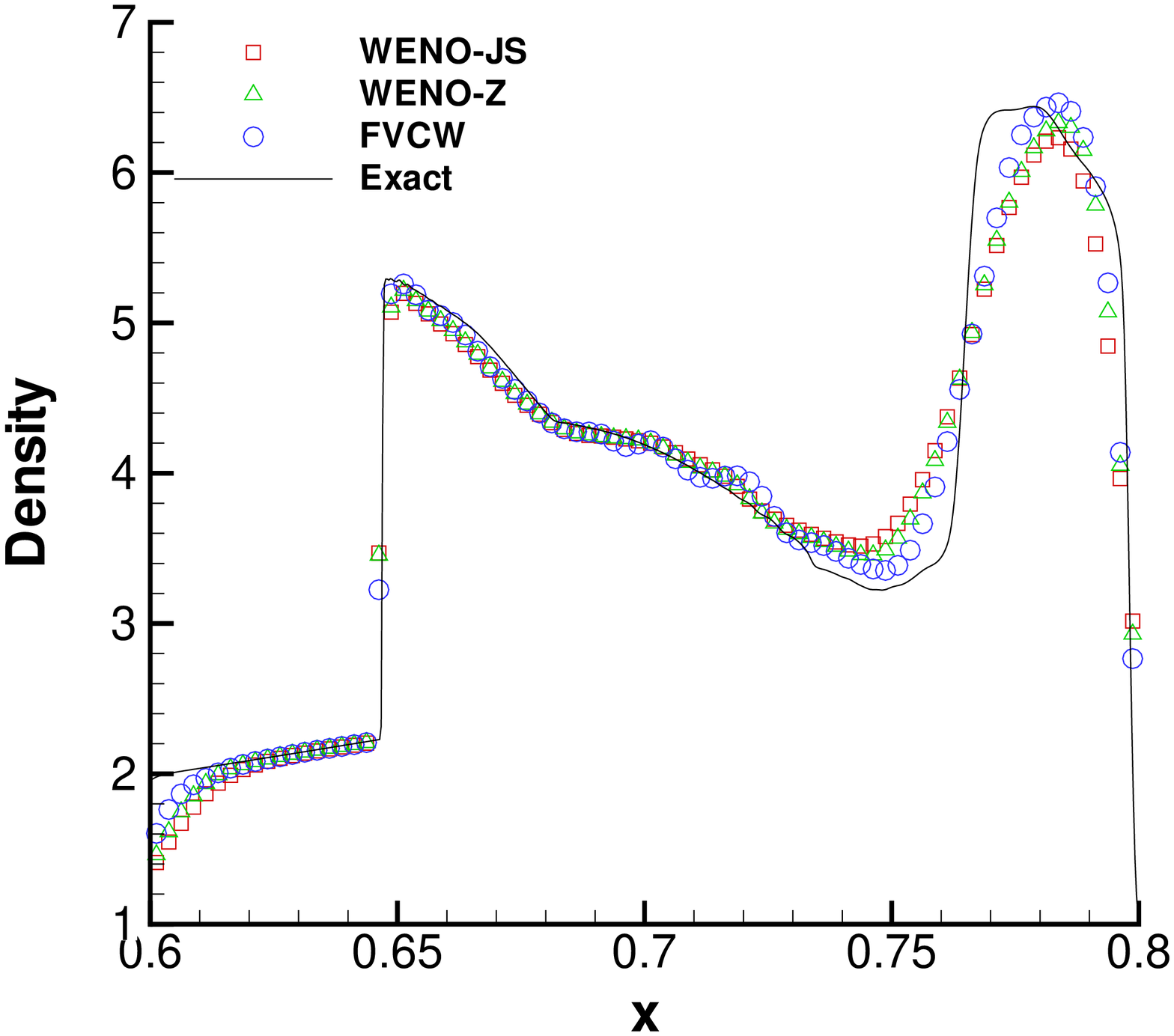}}
\caption{Blastwave interaction problem (\ref{blast}) with $N=400$ at $t=0.038$.}
\label{Fig:7}
\end{figure}
\end{example}

\begin{example}\label{Eg:123}
In this test, we consider a one-dimensional low density and low internal
energy Riemann problem with the following initial conditions
\begin{equation}
(\rho,u,p)=\left\{\begin{array}{ll}
(1,-2,0.4), & \textrm{$ 0 \leq x < 0.5$},\\
(1,2,0.4), & \textrm{$ 0.5 \leq x < 1$}.
\end{array}\right.
\label{lowrp}
\end{equation}
We take $h=0.0025$ and the final time $t=0.1$. The exact solution of
this test consists of a trivial contact wave and two symmetric
rarefaction waves. The results of the present positivity-preserving
FVCW scheme with $400$ cells compared with the exact solution are
shown in Fig. \ref{Fig:8}. The minimum numerical values of the
density and the internal energy are $1.835E-02$ and $3.158E-01$
respectively. For this problem, we can observe some oscillations in the 
central region, especially for the velocity and the internal energy. 
This might be due to the small density around that area and we have used a less dissipative HLLC flux. Slight oscillations on the density would cause very large oscillations
on the velocity and the internal energy. The Lax-Friedrichs flux can be used to
control the oscillations, we omit the results here to save space.
\begin{figure}
\subfigure[Density ]{\includegraphics[width=0.5\textwidth]{./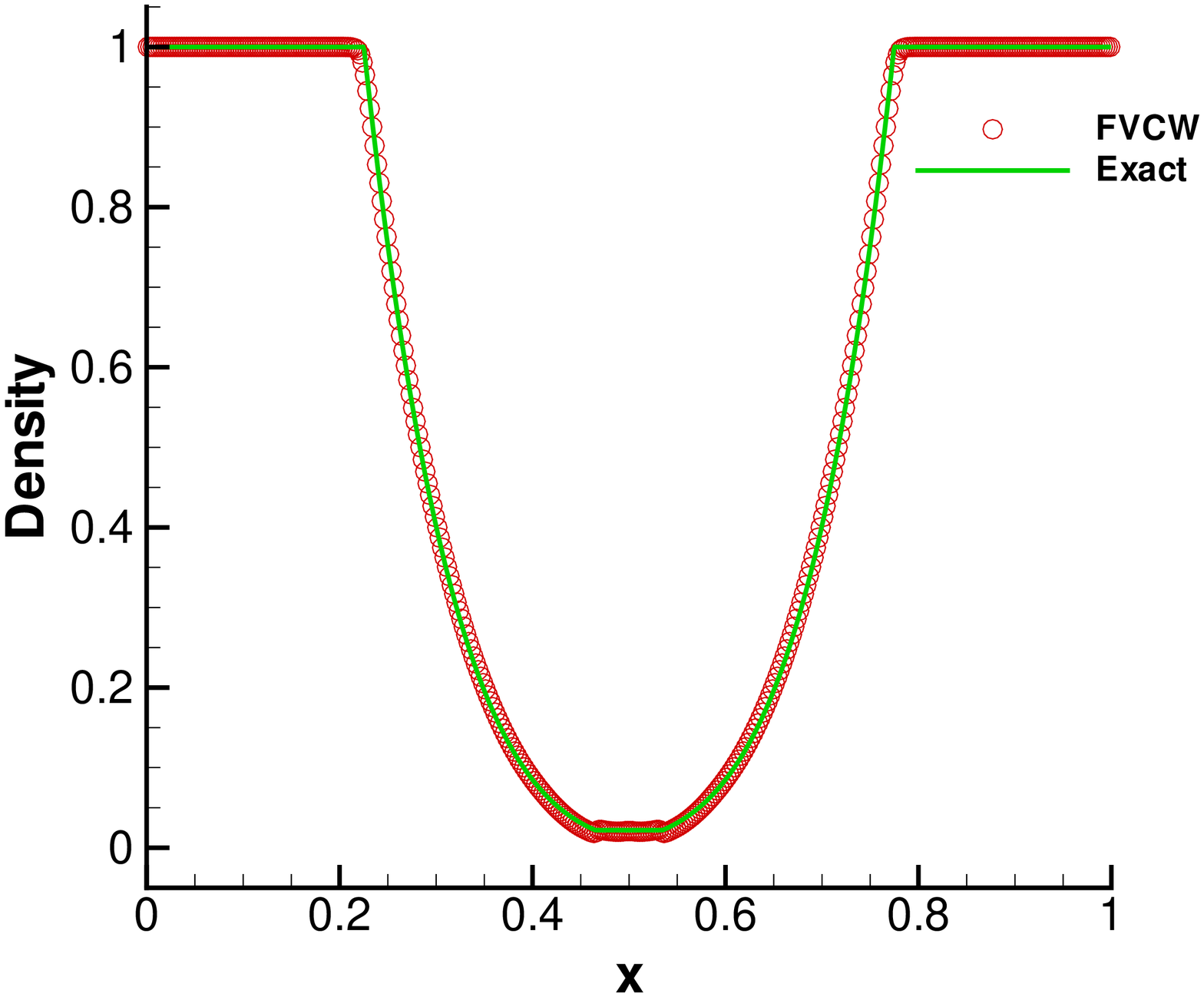}}
\subfigure[Velocity]{\includegraphics[width=0.5\textwidth]{./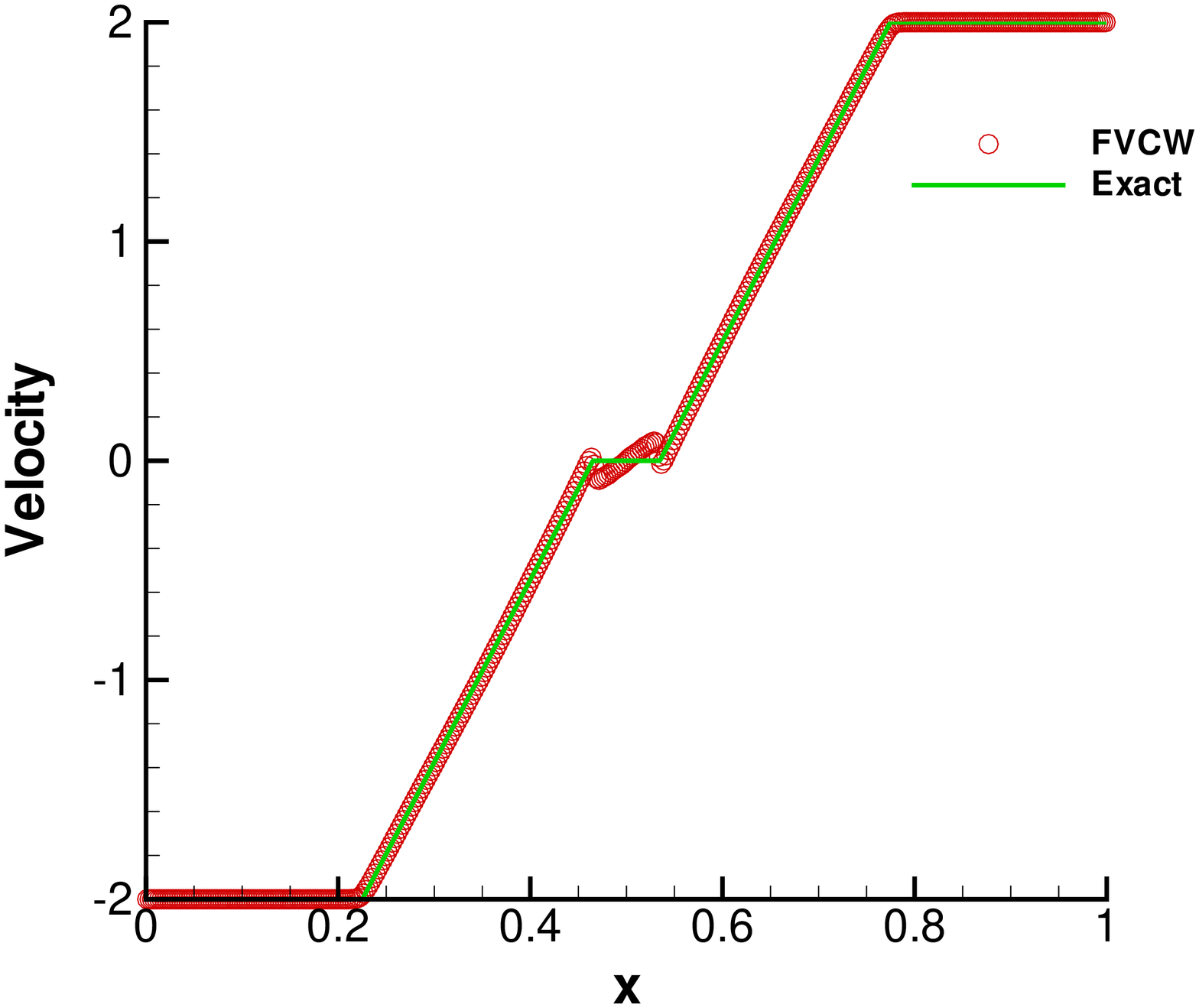}}
\subfigure[Pressure]{\includegraphics[width=0.5\textwidth]{./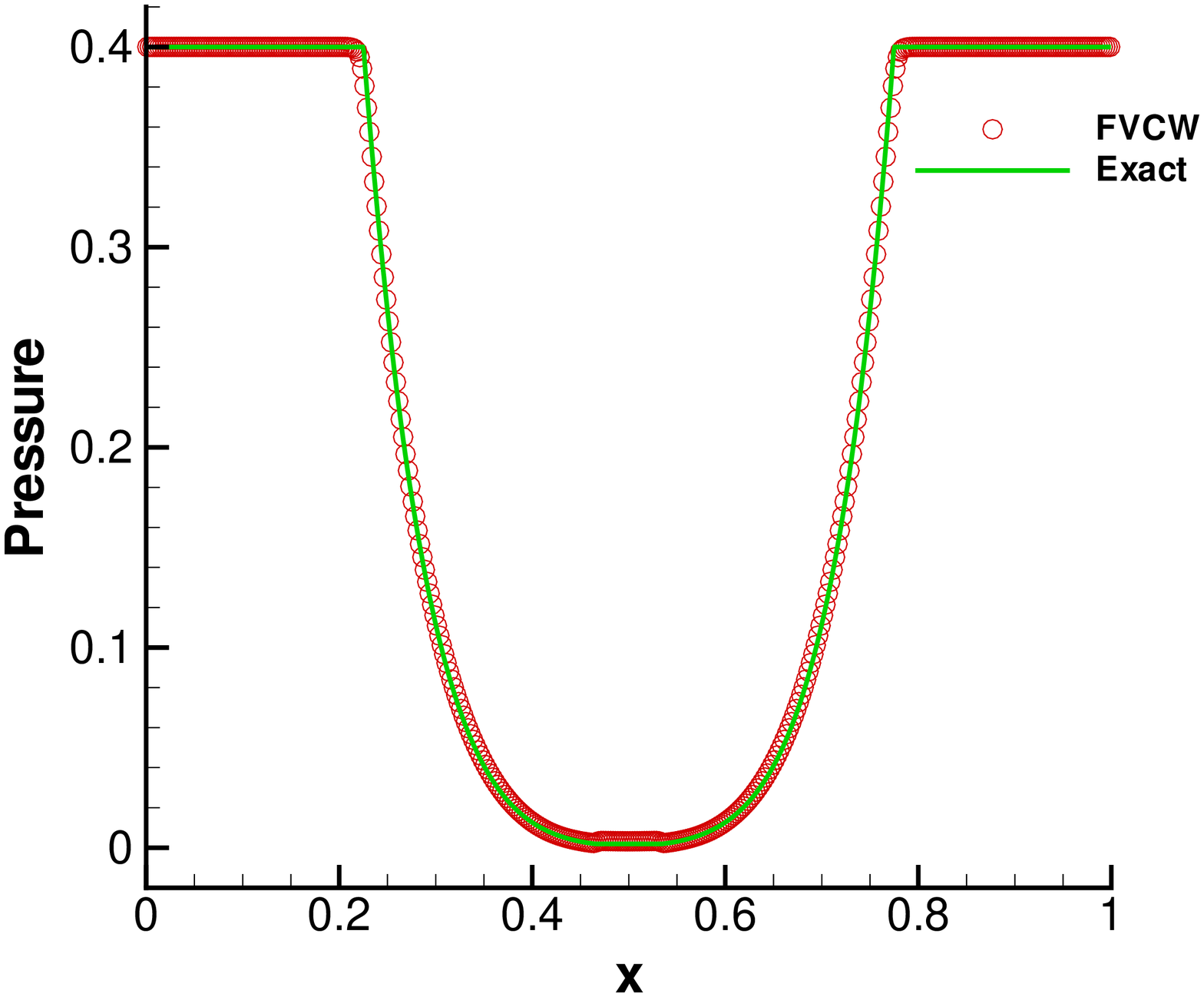}}
\subfigure[Internal
energy]{\includegraphics[width=0.5\textwidth]{./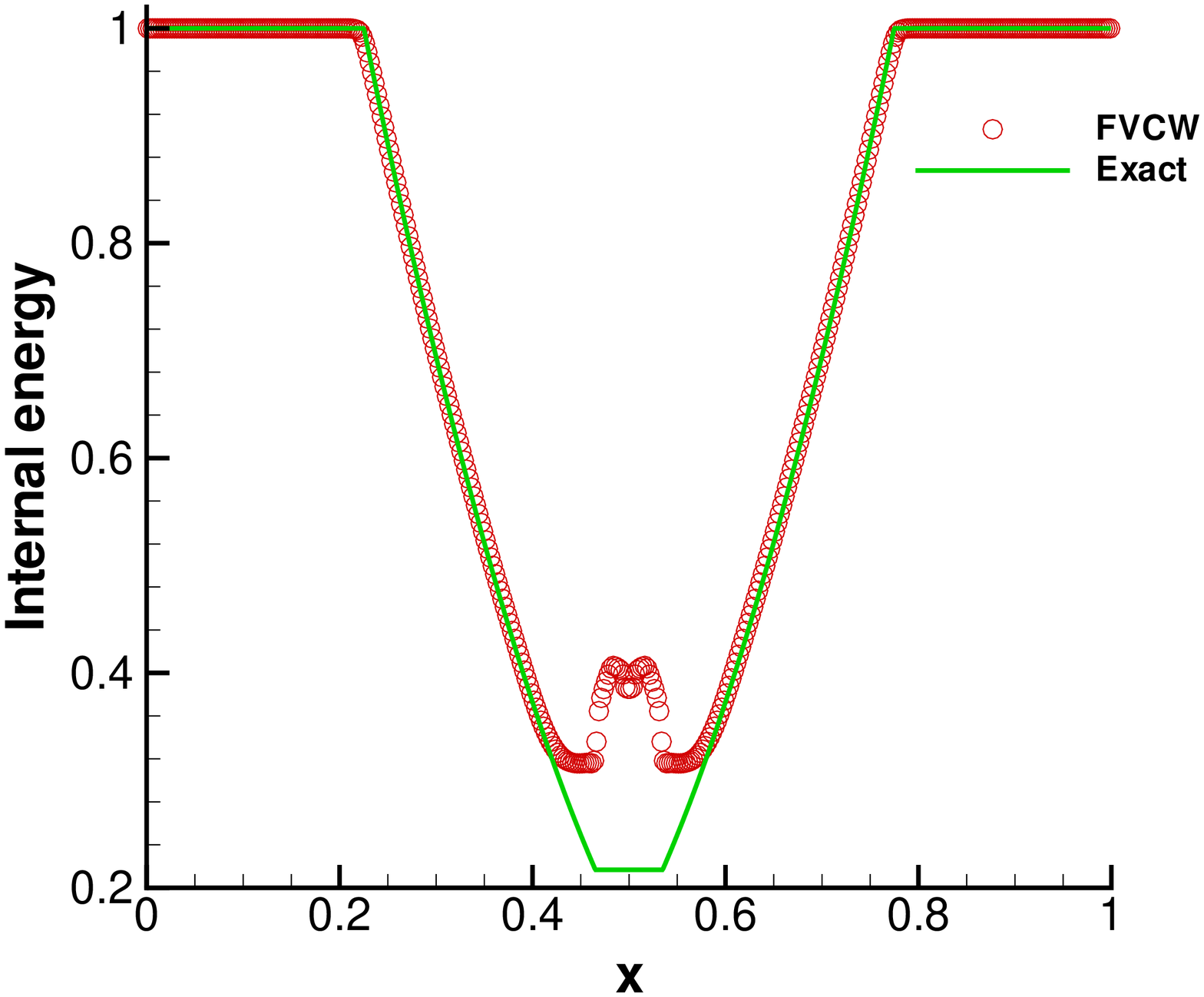}}
\caption{The results of the low density and low internal energy problem (\ref{lowrp})
with $N=400$ at $t=0.1$.}\label{Fig:8}
\end{figure}
\end{example}

\begin{example}\label{Eg:ess}
In this test, a strong shock wave is generated by an extremely high
pressure in the initial conditions
\begin{equation}
(\rho,u,p)=\left\{\begin{array}{ll}
(1,0,10^{10}), & \textrm{$ 0 \leq x \leq 0.5$},\\
(0.125,0,0.1), & \textrm{$ 0.5 \leq x < 1$},
\end{array}\right.
\label{hshock}
\end{equation}
with the final time $t=2.5\times 10^{-6}$. The results of the
present positivity-preserving FVCW scheme with 200 cells compared
with the exact solution are shown in Fig.\ref{Fig:9}. The numerical
solutions are very satisfactory in regard to numerical diffusion
and spurious oscillations. The minimum numerical values of the density
and the internal energy for this problem are $1.250E-01$ and $2.000E+00$
respectively. Both are positive.
\begin{figure}
\subfigure[Density]{\includegraphics[width=0.5\textwidth]{./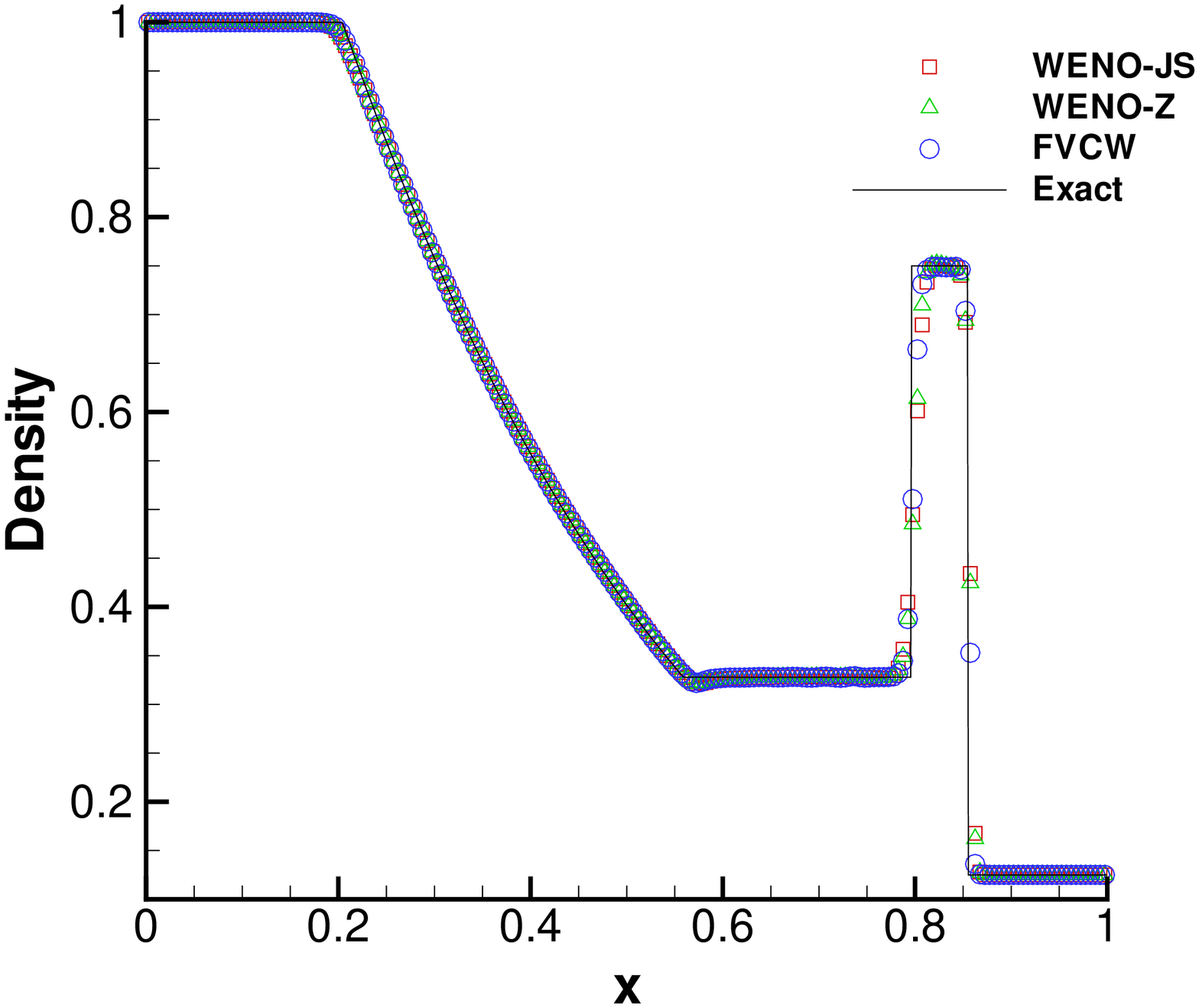}}
\subfigure[Velocity]{\includegraphics[width=0.5\textwidth]{./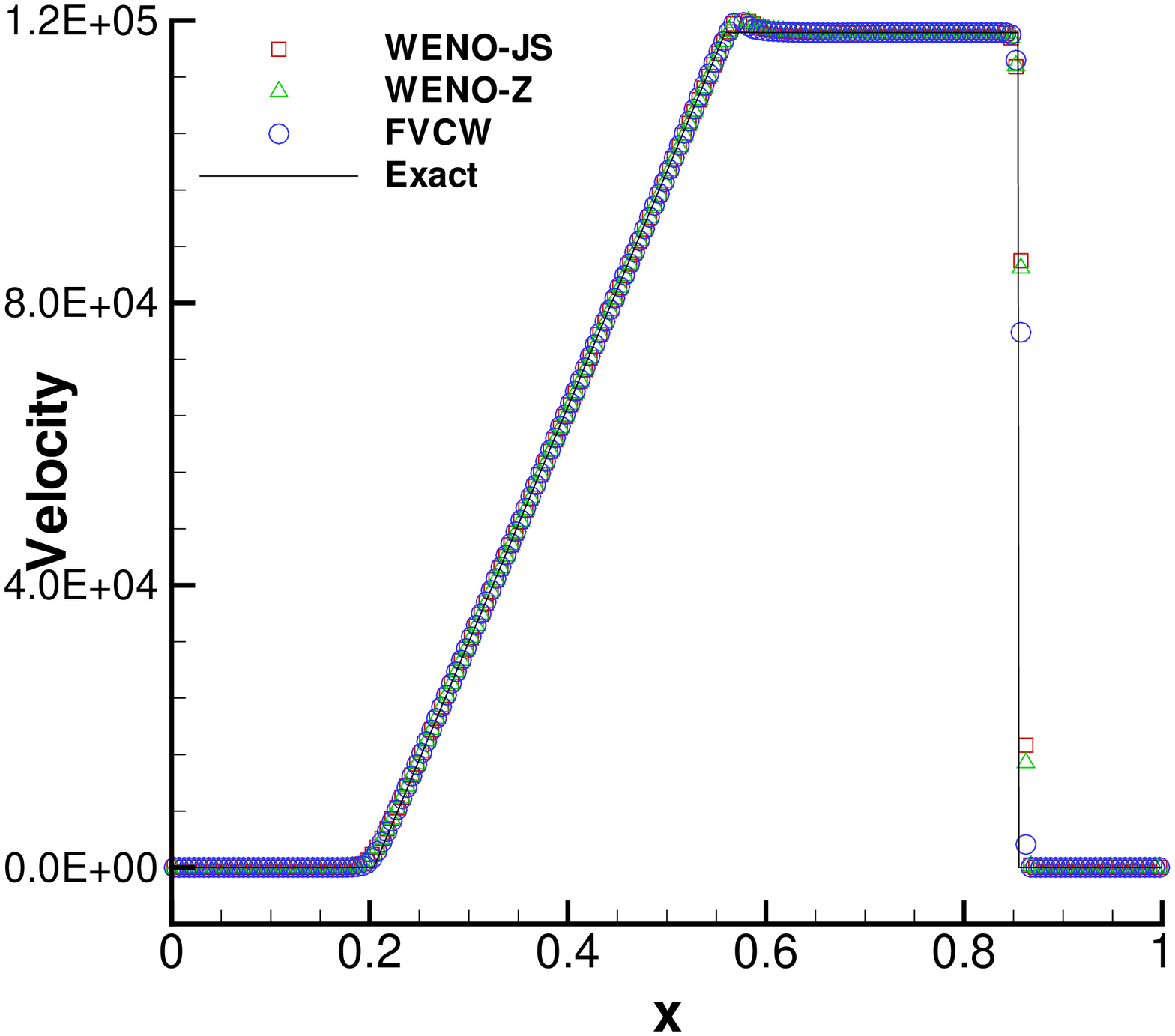}}
\subfigure[Pressure]{\includegraphics[width=0.5\textwidth]{./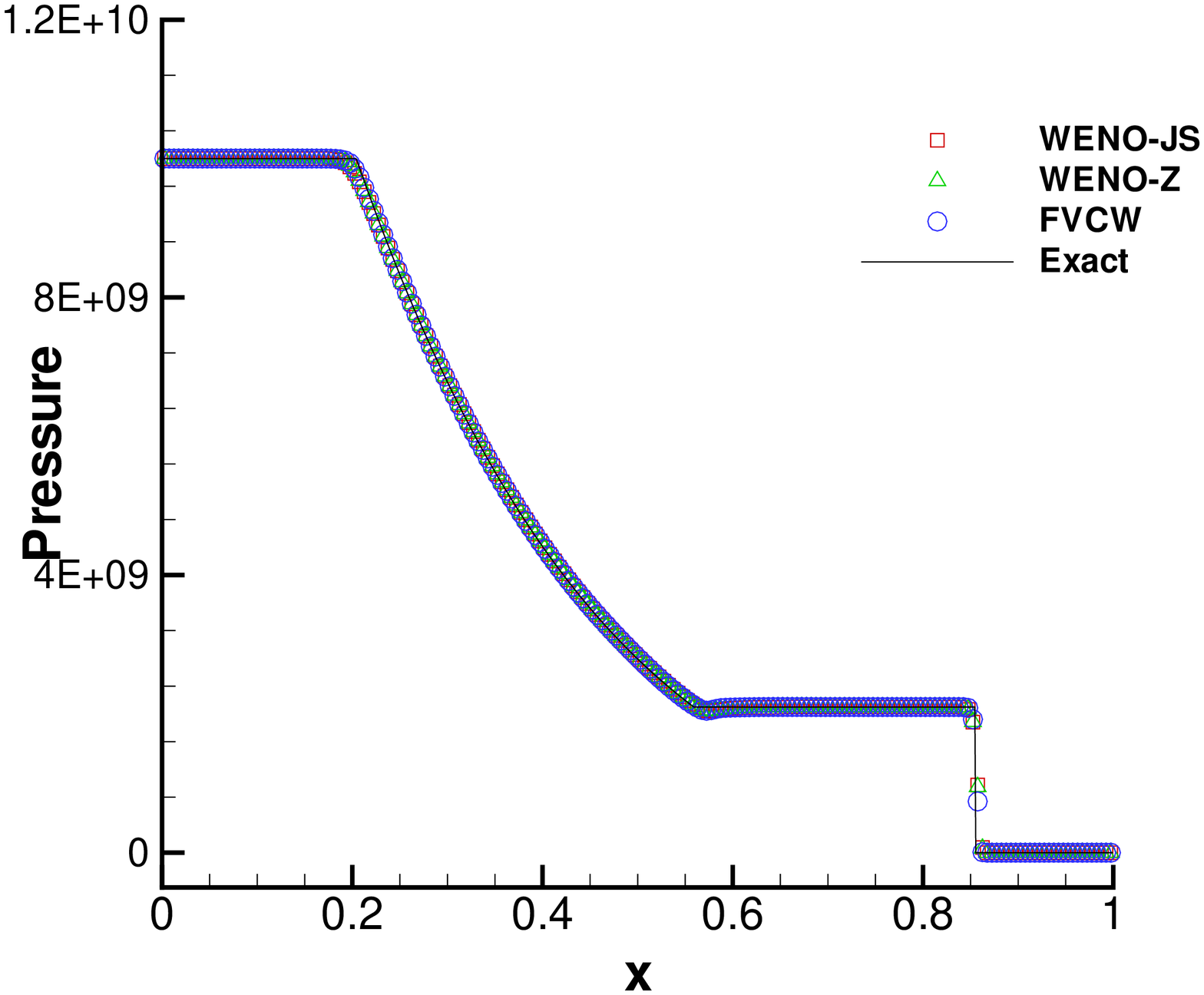}}
\subfigure[Internal
energy]{\includegraphics[width=0.5\textwidth]{./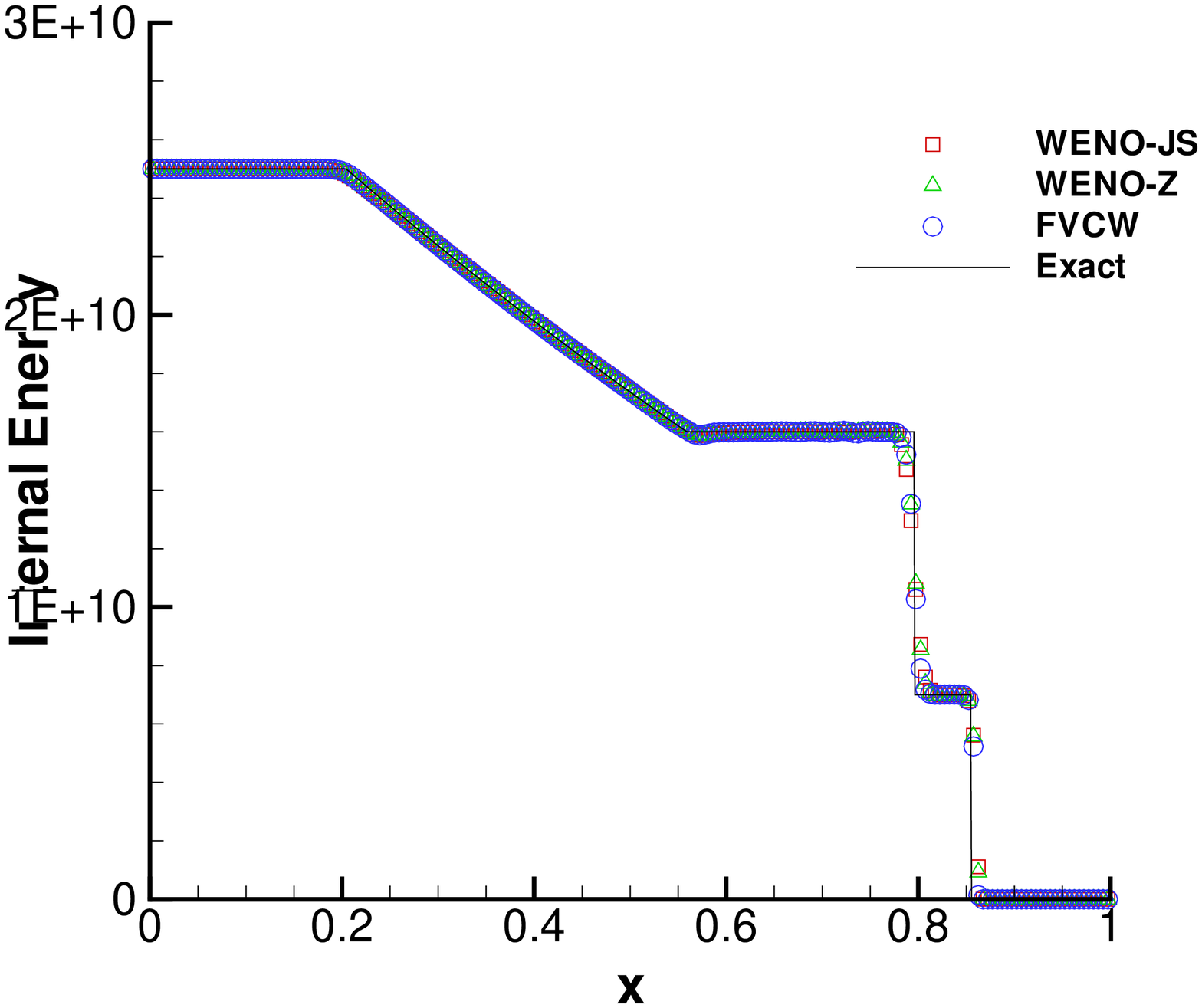}}
\caption{The results of the strong shock wave problem (\ref{hshock}) with
$N=200$ at $t = 2.5\times10^{-6}$.}
\label{Fig:9}
\end{figure}
\end{example}

\begin{example}\label{Eg:vac}
This one-dimensional test problem involves vacuum or near-vacuum solutions
with the following initial conditions
\begin{equation}
(\rho,u,p)=\left\{\begin{array}{ll}
(7,-1,0.2), & \textrm{$ -1 \leq x < 0$},\\
(7,-1,0.2), & \textrm{$ 0 \leq x \leq 1$},
\end{array}\right.
\label{doubler}
\end{equation}
with $h=0.005$ and the final time is $t=0.6$. The computed pressure,
density and velocity distributions are show in Fig. \ref{Fig:10}
(left). For this double rarefaction problem, the present FVCW scheme
with the HLLC flux has comparable results as those in Zhang and Shu
\cite{zhang2012positivity} (see their Fig. 5.1 (left)).  
The minimum numerical values of the density and the pressure are small positive values of
$2.120E-04$ and $2.201E-04$ respectively. For this problem with
vacuum or near-vacuum solutions, some oscillations can also be observed which
might be due to the same reason as described in Example \ref{Eg:123}.

\begin{figure}
\subfigure[Density]{\includegraphics[width=0.5\textwidth]{./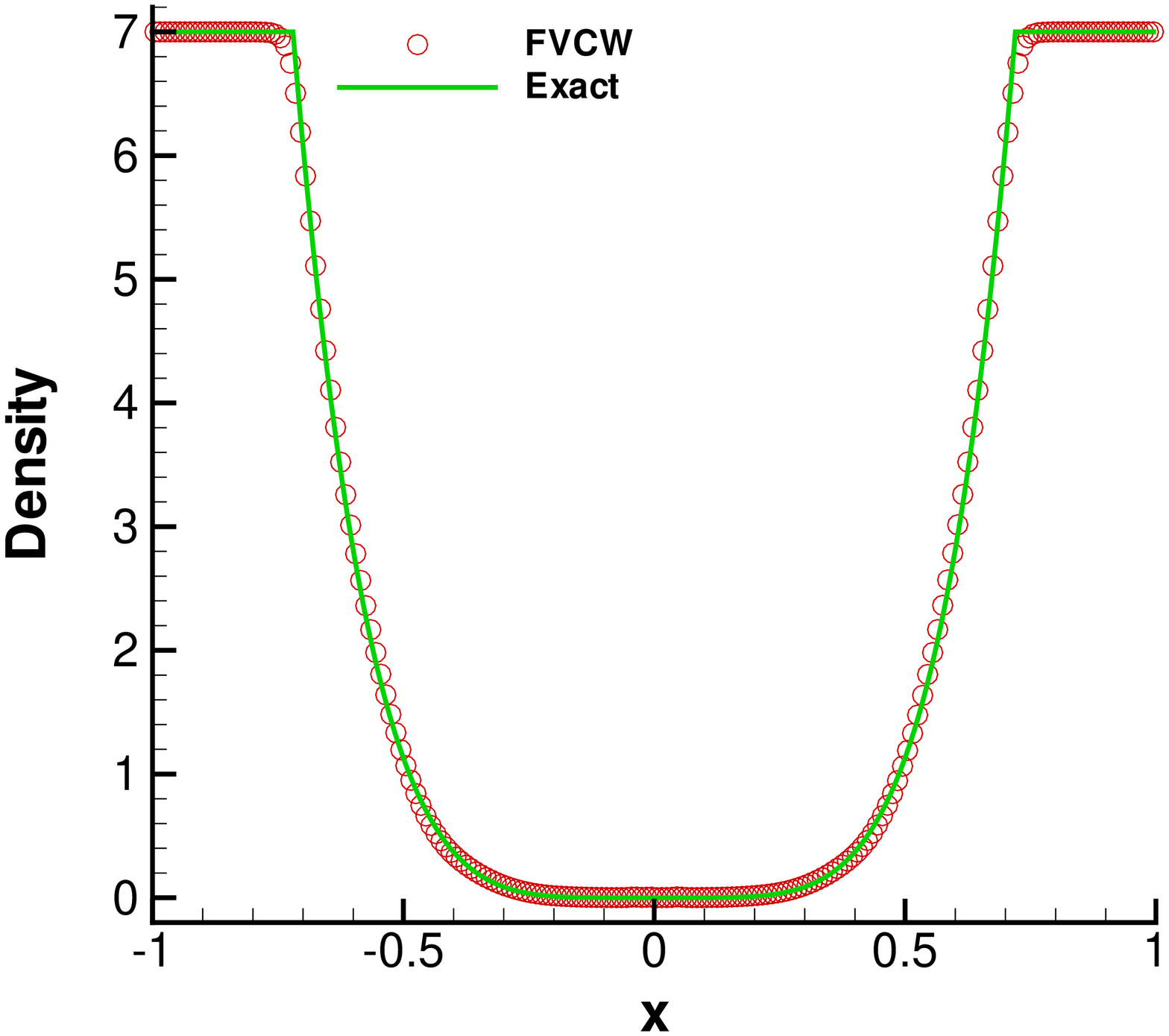}}
\subfigure[Density]{\includegraphics[width=0.5\textwidth]{./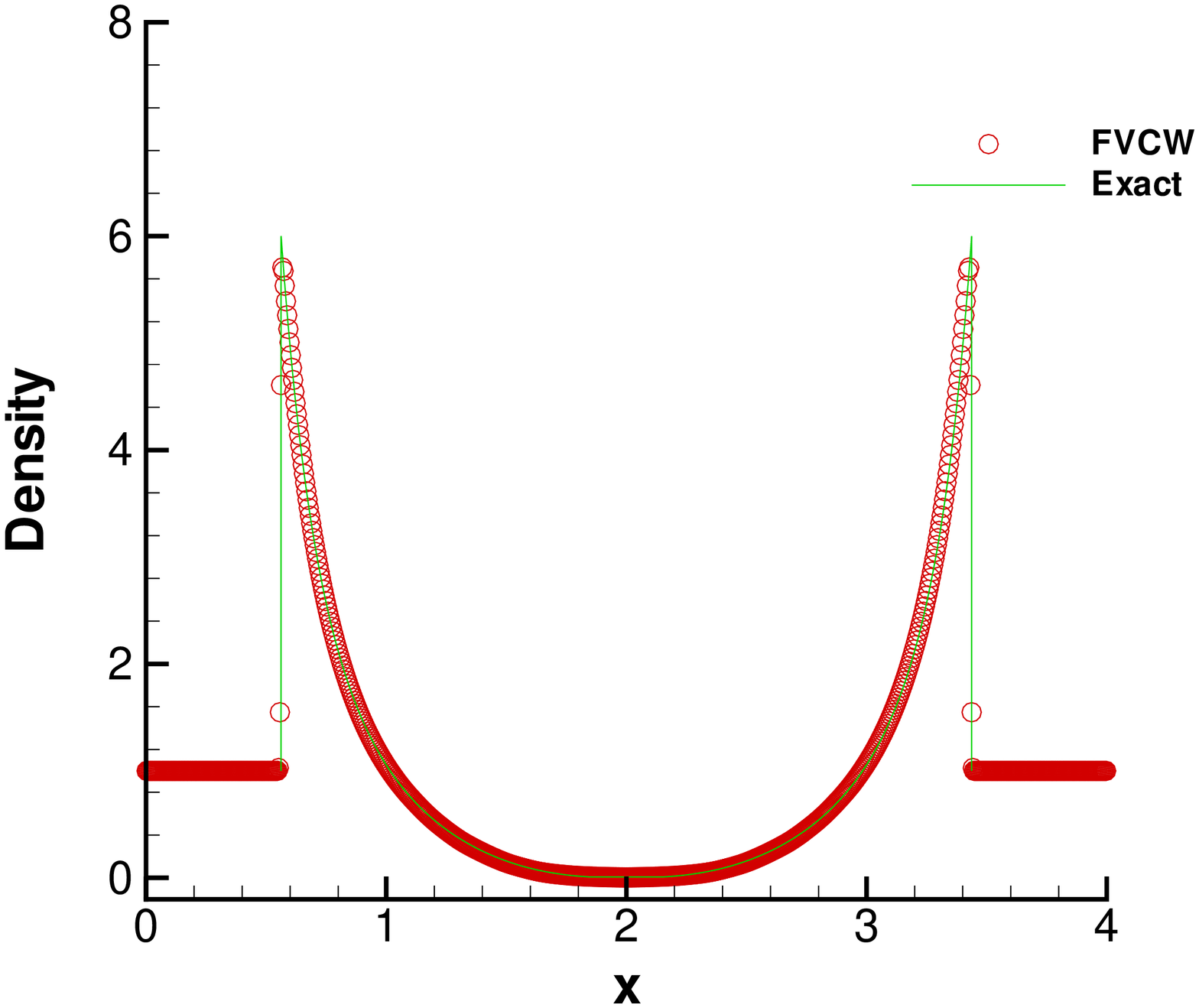}}
\subfigure[Pressure]{\includegraphics[width=0.5\textwidth]{./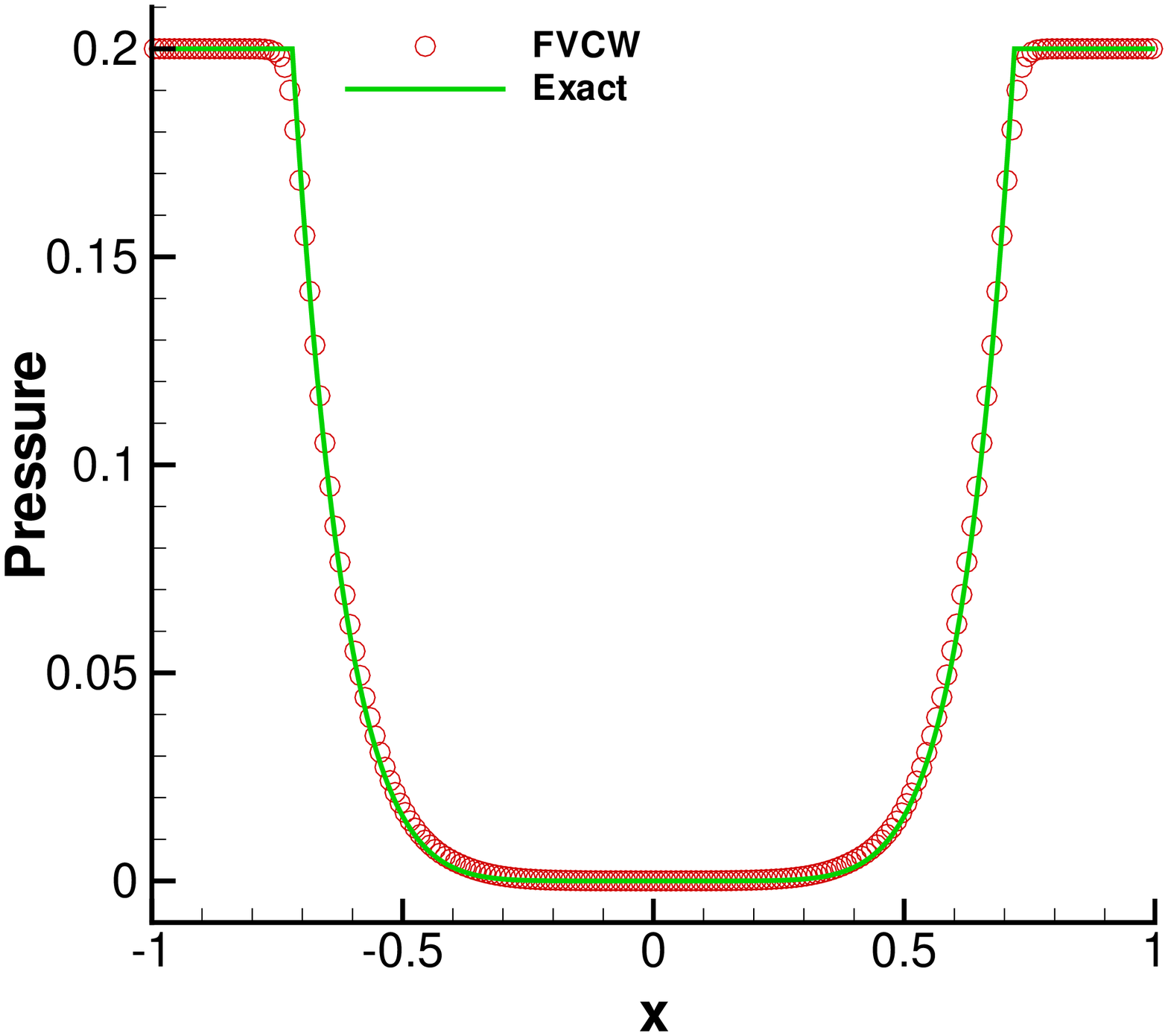}}
\subfigure[Pressure]{\includegraphics[width=0.5\textwidth]{./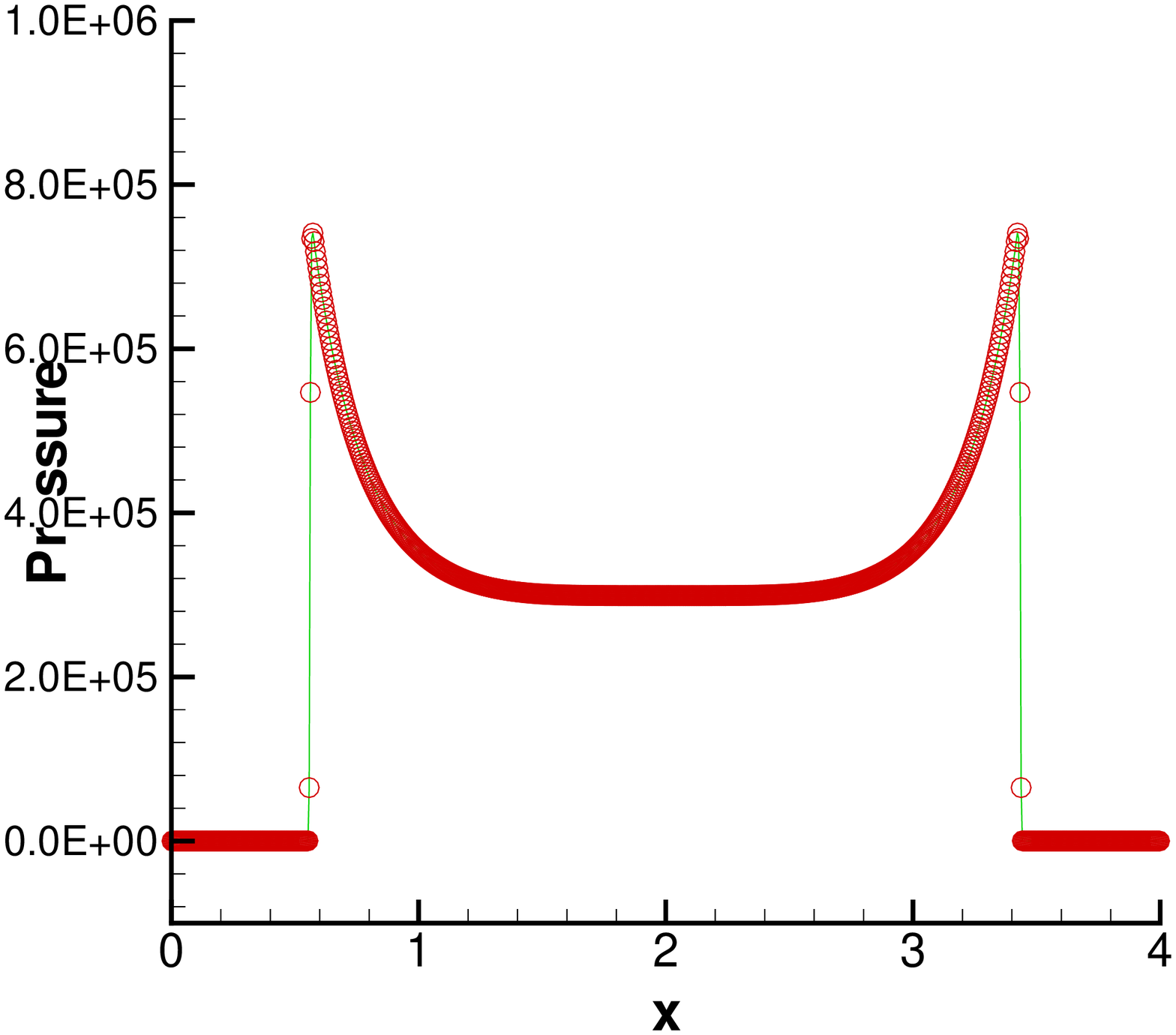}}
\subfigure[Velocity]{\includegraphics[width=0.5\textwidth]{./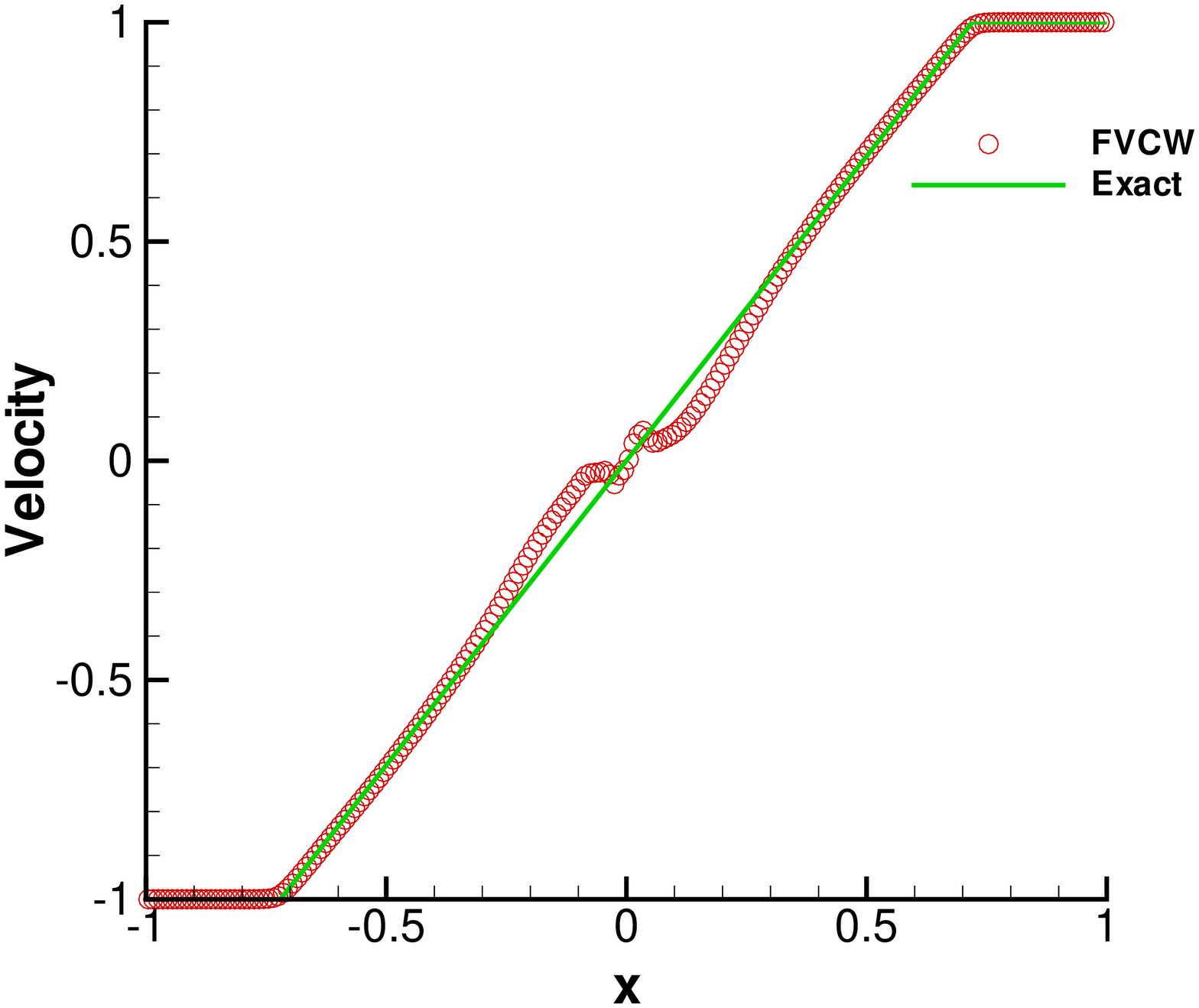}}
\subfigure[Velocity]{\includegraphics[width=0.5\textwidth]{./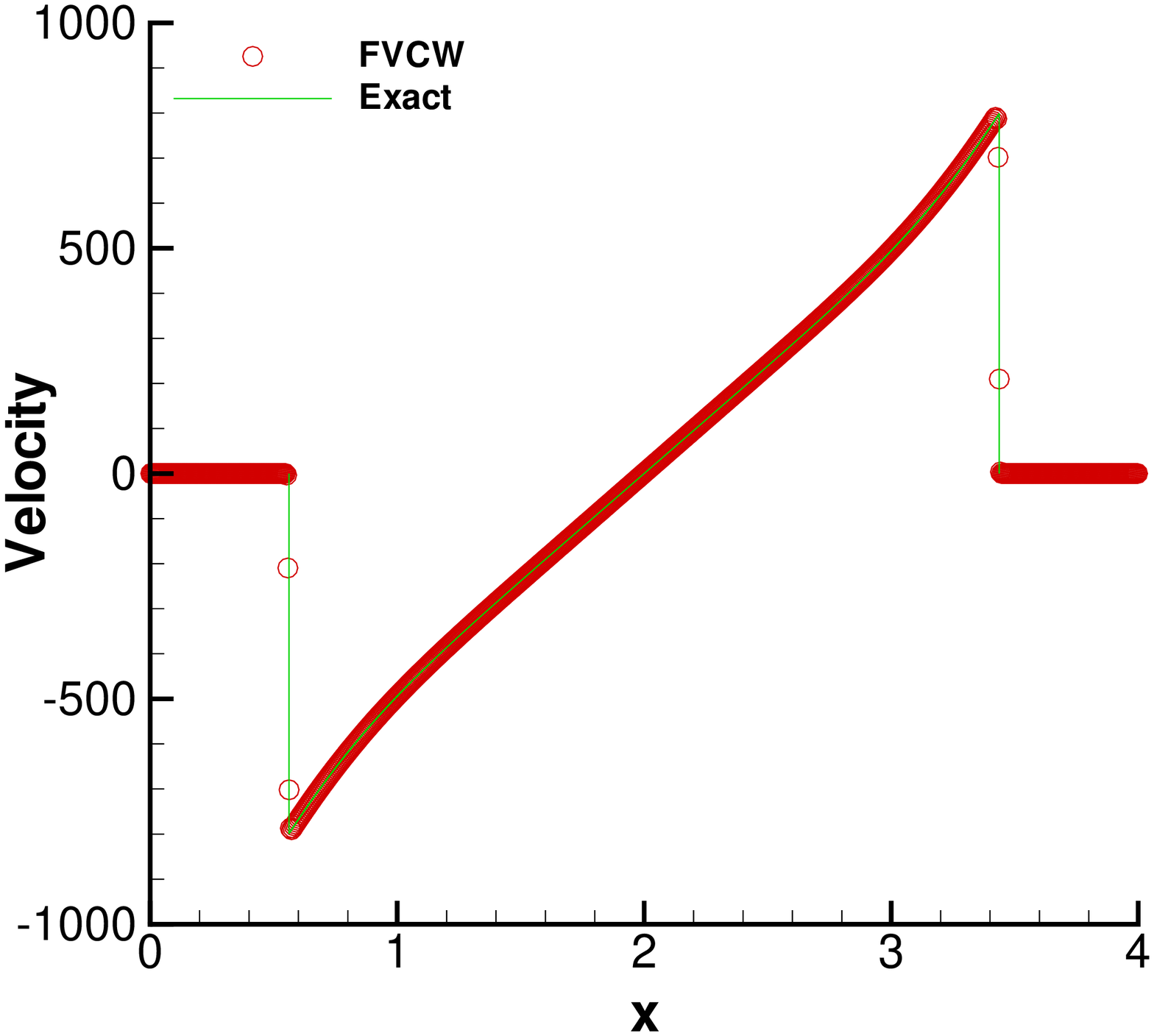}}
\caption{One-dimensional problems involving vacuum or near vacuum, $h=0.005$:
 (left) double rarefaction problem (\ref{doubler}) at $t=0.6$; (right) planar Sedov blast-wave problem (\ref{sedov}) at $t=0.001$.}\label{Fig:10}
\end{figure}
\end{example}

\begin{example}\label{Eg:sbv}
This one-dimensional test problem is the planar Sedov blast-wave
problem with the following initial conditions
\begin{equation}
(\rho,u,p)=\left\{\begin{array}{ll}
(1,0,4\times 10^{-13}),  & \textrm{$ 0 < x \leq 2-0.5 h$, $ 2+0.5 h < x < 4$},\\
(1,0,2.56\times 10^{8}), & \textrm{$ 2-0.5 h< x \leq 2+0.5 h$},
\end{array}\right.
\label{sedov}
\end{equation}
with $h=0.005$ and the final time is $t=0.001$. The numerical
results of the present positivity-preserving fifth order finite
volume compact-WENO scheme are shown in Fig.\ref{Fig:10} (right).
By comparing with Zhang and Shu \cite{zhang2012positivity} (see
their Fig. 5.1 (right)) for the planar Sedov blast-wave problem, we
can observe that a slightly sharper blast wave is obtained by using
the present FVCW scheme. The minimum numerical values of the density
and the internal energy are also small positive values of $4.731E-03$ and $1.000E-12$
respectively.
\end{example}

\begin{example}\label{Eg:lsb}
LeBlanc shock tube problem. In this extreme shock tube problem, the
computational domain is [0,9] filled with a perfect gas with
$\gamma=5/3$. The initial conditions are with high ratio of jumps
for the internal energy and density. The jump for the internal energy is $10^6$ and
the jump for the density is $10^3$. The initial conditions are given by
\begin{equation}
(\rho,u,e)=\left\{\begin{array}{ll}
(1,0,0.1), & \textrm{$ 0\leq x < 3$},\\
(0.001,0,10^{-7}), & \textrm{$3< x\leq 9$}.
\end{array}\right.
\label{leBlanc}
\end{equation}

The solution consists of a strong rarefaction wave moving to the left, a
contact discontinuity and a shock moving to the right. The
difficulty for numerical simulations of this
problem can be found in
\cite{liu2009high,Cheng2014positivity,loubere2005subcell}. Numerical
results obtained with the present FVCW schemes at $t=6.0$ with $400$
and $1000$ cells
are shown in Fig.\ref{Fig:11}. By comparing with the exact
solutions, we can observe that the present FVCW scheme preserves
positive density and internal energy, and the minimum numerical
values for density and pressure are $1.000E-03$ and $1.000E-07$
respectively. An overshoot is produced, especially for the internal
energy, however similar results are obtained in
\cite{liu2009high,Cheng2014positivity,loubere2005subcell}.
Fig.\ref{Fig:11} shows that the numerical solution is greatly
improved as the mesh is refined.
\begin{figure}
\subfigure[Velocity]{\includegraphics[width=0.5\textwidth]{./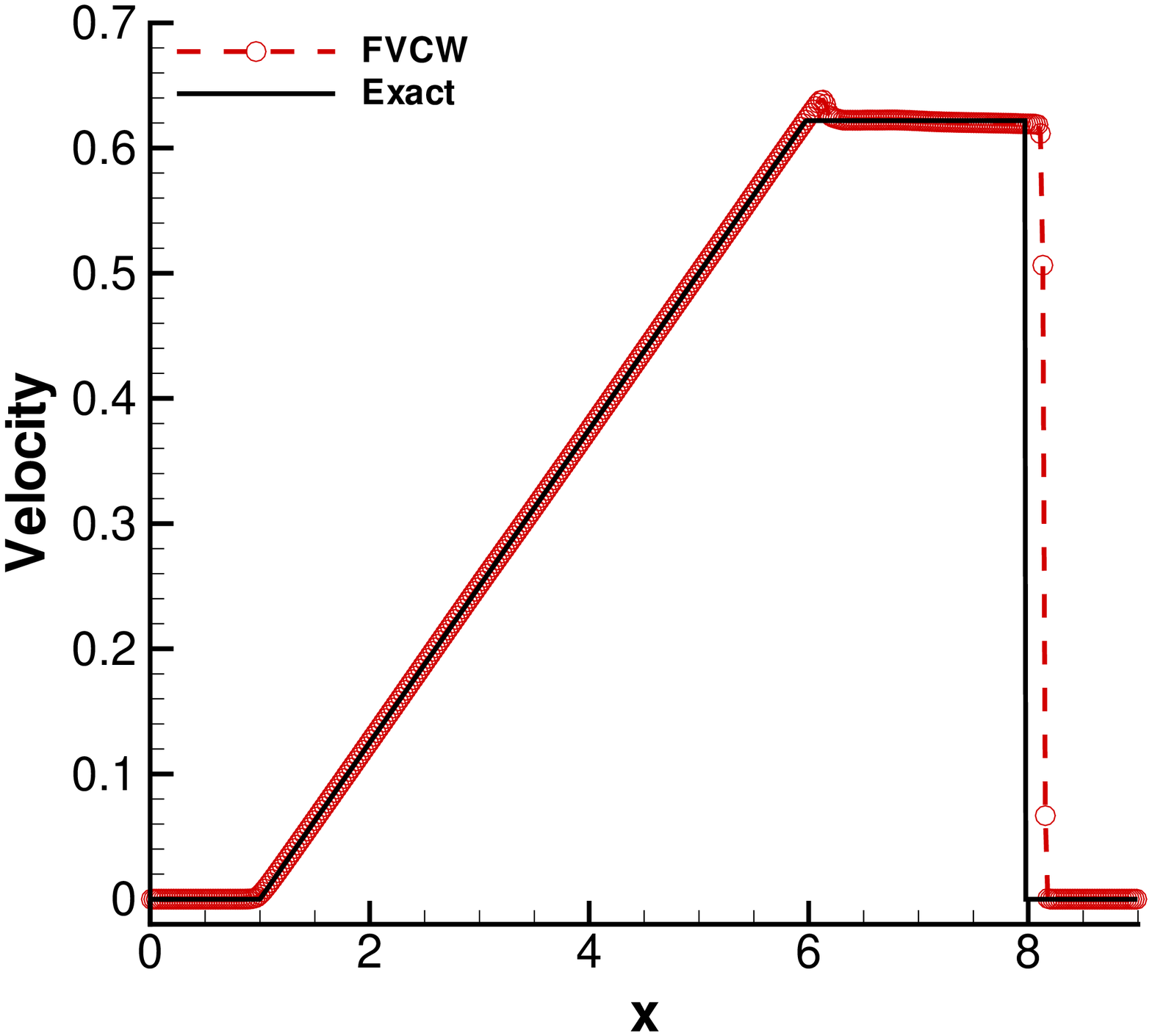}}
\subfigure[Velocity]{\includegraphics[width=0.5\textwidth]{./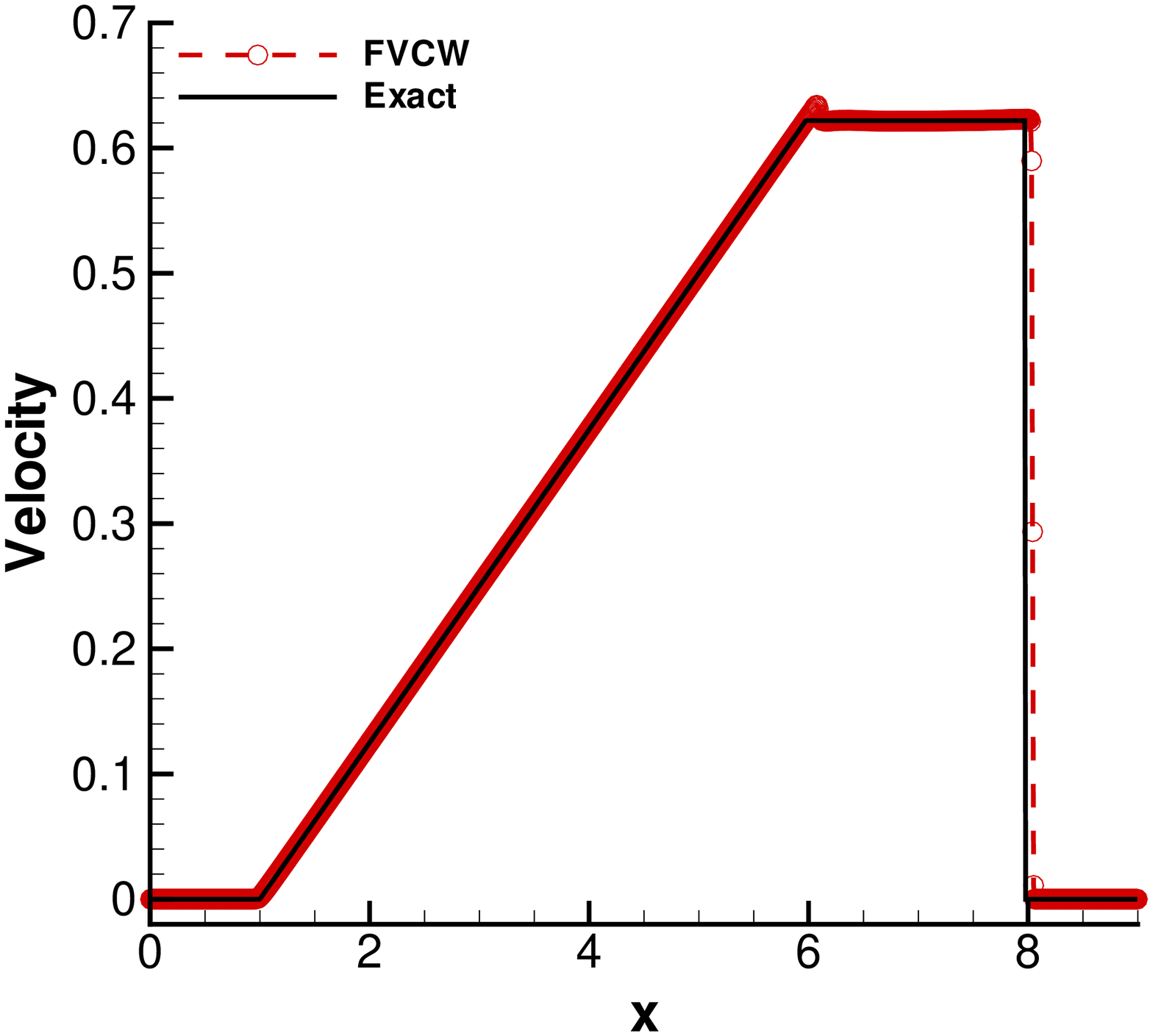}}
\subfigure[Internal
energy]{\includegraphics[width=0.5\textwidth]{./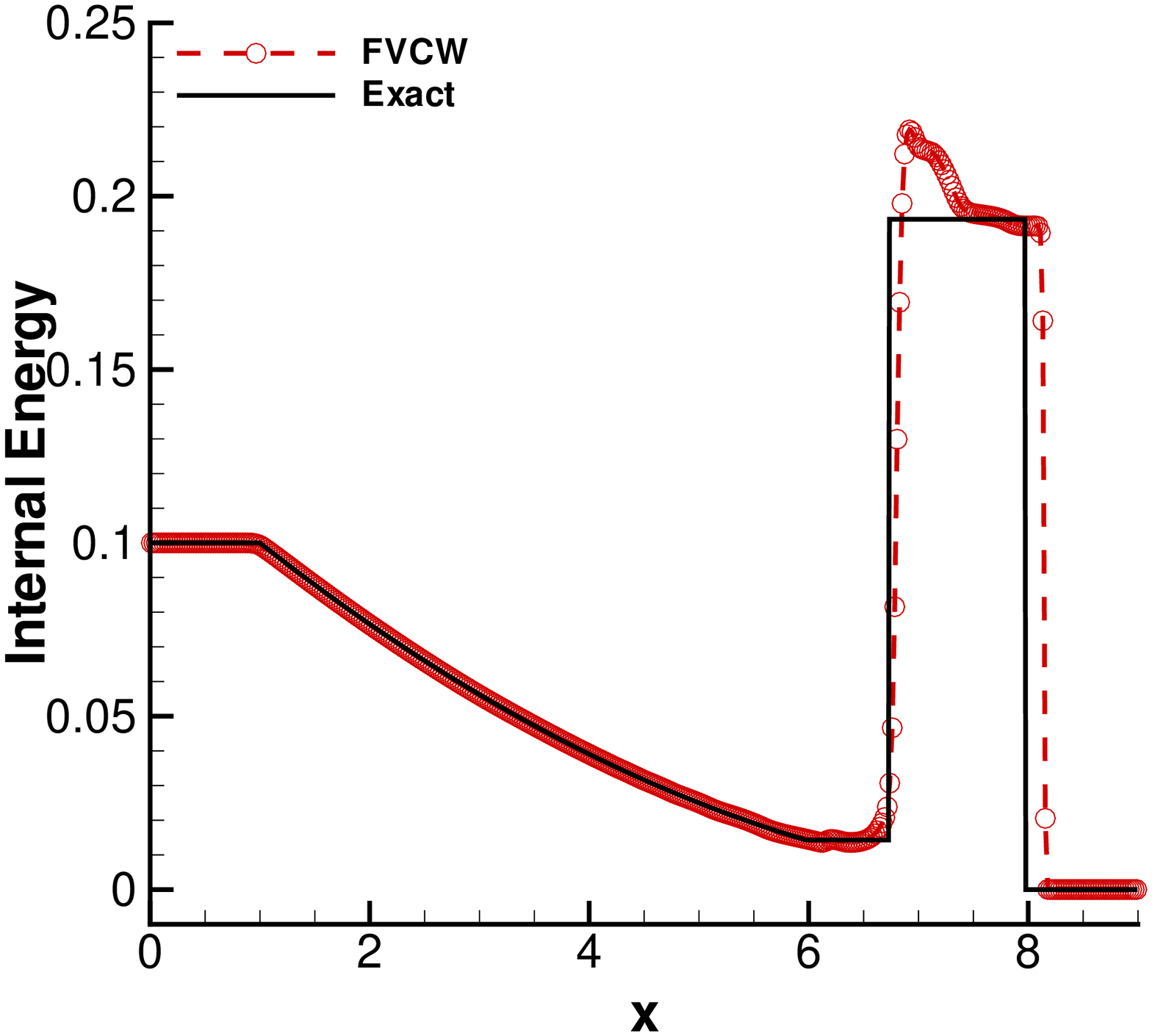}}
\subfigure[Internal
energy]{\includegraphics[width=0.5\textwidth]{./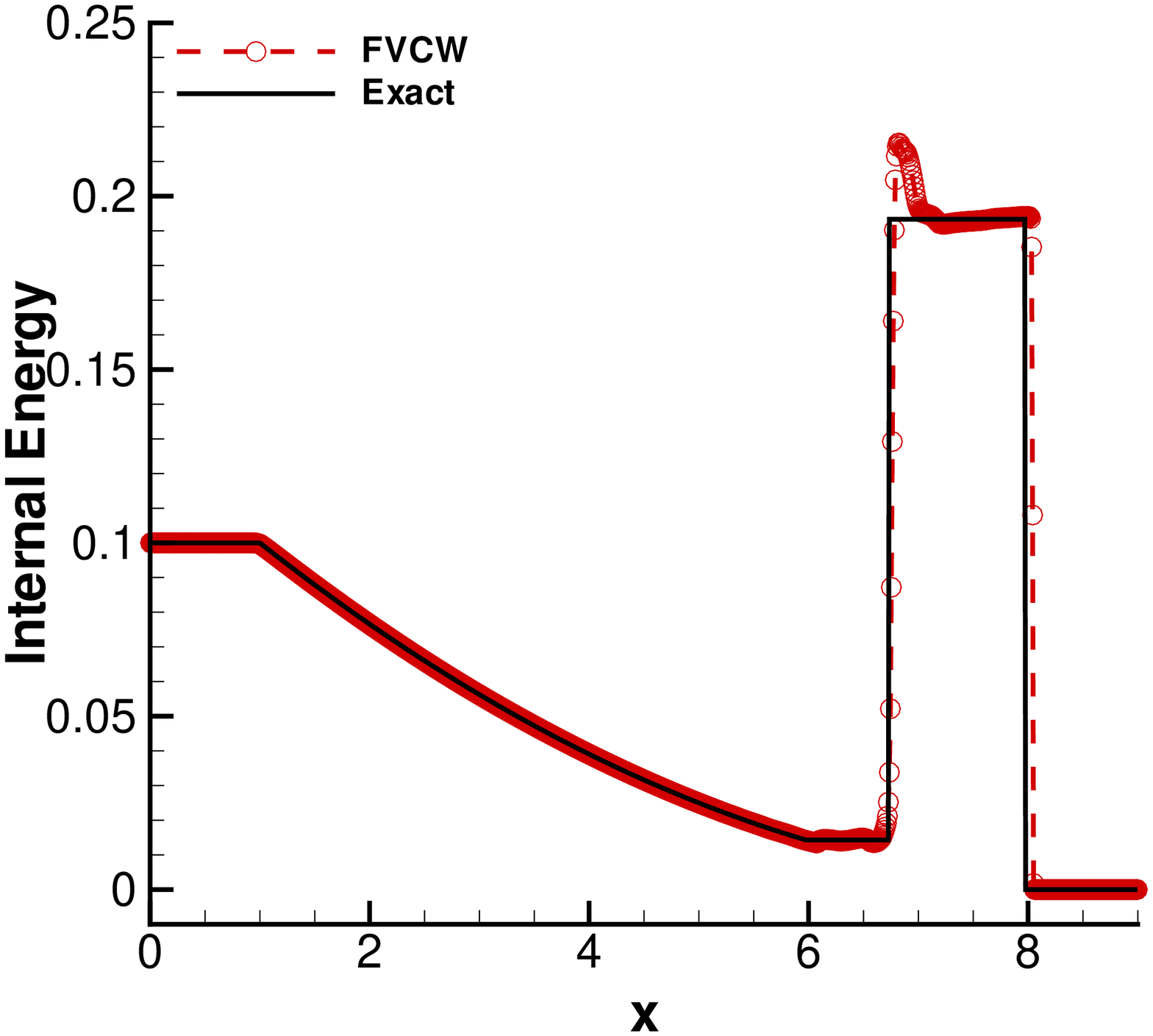}}
\caption{The results of the Leblanc problem (\ref{leBlanc}) at $t=6.0$. $N=400$
(left), $N=1000$ (right).}\label{Fig:11}
\end{figure}
\end{example}

\section{Conclusions}
\setcounter{equation}{0}
\setcounter{figure}{0}
\setcounter{table}{0}

In this paper, we have developed a positivity-preserving fifth-order finite
volume compact-WENO scheme for compressible Euler equations in one dimension. 
Compared to finite difference compact-reconstruction WENO schemes proposed by Ghosh et. al. \cite{ghosh2012compact}, the positivity-preserving limiter is used to
preserve positive density and internal energy under a finite volume framework. An approximate HLLC
Riemann solver is used due to its less dissipation and robustness. The present scheme increases spectral properties of the classical WENO schemes. Compared to classical fifth order finite volume
compact schemes, the present scheme keeps the essentially non-oscillatory properties
for capturing discontinuities. Numerical results have shown that the present scheme is positivity preserving, high order accurate, and can produce superior resolutions 
compared to the classical WENO schemes. Extension the FVCW scheme to multi-dimensional problems
contributes our future work.

\begin{acknowledgements}
The work was partly supported by the Fundamental Research Funds for
the Central Universities (2010QNA39, 2010LKSX02). The third author
acknowledges the funding support of this research by the Fundamental
Research Funds for the Central Universities(2012QNB07).
\end{acknowledgements}


\bibliographystyle{siam}
\bibliography{paper-euler}
%
%

\end{document}